\numberwithin{equation}{section}
\newtheorem{theorem}[equation]{Theorem}
\newtheorem{proposition}[equation]{Proposition}
\newtheorem{lemma}[equation]{Lemma}
\newtheorem{corollary}[equation]{Corollary}
\theoremstyle{definition} 
\newtheorem{definition}[equation]{Definition}
\newtheorem{remark}[equation]{Remark}
\newtheorem{example}[equation]{Example}
\DeclareMathOperator{\Char}{Char}
\DeclareMathOperator{\Diff}{Diff}
\DeclareMathOperator{\Dom}{Dom}
\DeclareMathOperator{\Hom}{Hom}
\DeclareMathOperator{\rk}{rank}
\DeclareMathOperator{\sym}{ \sigma\!\!\!\sigma}
\DeclareMathOperator{\bsym}{\!{}^{\mathit b}\!\!\sym}
\DeclareMathOperator{\Span}{span}
\DeclareMathOperator{\spec}{spec}
\DeclareMathOperator{\Levi}{Levi}
\def\bd{{}^b\!d}
\def\bnabla{{}^b\!\nabla}
\def\bdeebar{{}^b\!\deebar}
\def\dee{\partial}
\def\bdee{{}^b\!\dee}
\def\bT{{}^bT}
\def\bWedge{{}^b{\!}\Wedge}
\def\bpi{{}^b\pi}
\def\A{\mathcal A}
\def\D{\mathcal D}
\def\Dbar{\overline\D}
\def\G{\mathcal G}
\def\Hor{\mathcal H}
\def\K{\mathcal K}
\def\Kbar{\overline \K}
\def\Lie{\mathcal {L}}
\def\M{\mathcal M}
\def\N{\mathcal N}
\def\T{\mathcal T}
\def\Vee{\mathcal V}
\def\Veebar{\overline\Vee}
\def\W{\mathcal W}
\def\Wbar{\overline\W}
\def\Ha{\mathscr H}
\def\C{\mathbb C}
\def\Dee{\mathbb D}
\def\Deebar{\overline\Dee}
\def\R{\mathbb R}
\def\a{\mathfrak a}
\def\Hol{\mathfrak {Hol}}
\def\m{\mathfrak m}
\def\Mero{\mathfrak {Mero}}
\def\rr{\mathfrak r}
\def\Sing{\mathfrak S}
\def\dR{\mathrm {dR}}
\def\im{i}
\def\sign #1#2{\epsilon^{#1}_{#2}}
\def\Wedge{\raise2ex\hbox{$\mathchar"0356$}}
\def\av{\mathrm{av}}
\def\inner{\mathbf i}
\def\Id{I}
\def\ev{\mathrm{ev}}
\def\minus{\backslash}
\def\set#1{\{#1\}}
\def\open#1{\smash[t]{\overset{{}_{\,\,\circ}}{#1}{}}}
\def\ie{i.e.}
\def\e{\varepsilon}
\def\bT{{}^b{\!}T}
\def\deebar{\overline \partial}
\def\deebarb{\overline \partial_b}
\def\Laplacian{\square}
\def\neutral#1{} 
\def\display#1#2{\mbox{\parbox{#1} {#2}}}
\begin{document}

\title{Complex $b$-manifolds}
\author{Gerardo A. Mendoza}
\email{gmendoza@math.temple.edu}
\address{Department of Mathematics\\
Temple University\\
Philadelphia, PA 19122}
\begin{abstract}
A complex $b$-structure on a manifold $\M$ with boundary is an involutive subbundle $\bT^{0,1}\M$ of the complexification of $\bT\M$ with the property that $\C\bT\M = \bT^{0,1}\M + \overline{\bT^{0,1}\M}$ as a direct sum; the interior of $\M$ is a complex manifold. The complex $b$-structure determines an elliptic complex of $b$-operators and induces a rich structure on the boundary of $\M$. We study the cohomology of the indicial complex of the $b$-Dolbeault complex.
\end{abstract}

\keywords{Complex manifolds, $b$-tangent bundle, cohomology}
\subjclass[2010]{Primary 32Q99; Secondary 58J10, 32V05}
\maketitle

\section{Introduction}\label{sIntroduction}

A complex $b$-manifold is a smooth manifold with boundary together with a complex $b$-structure. The latter is a smooth involutive subbundle $\bT^{0,1}\M$ of the complexification $\C\bT\M$ of Melrose's $b$-tangent bundle \cite{RBM1,RBM2} with the property that
\begin{equation*}
\C\bT\M = \bT^{0,1}\M + \overline{\bT^{0,1}\M}
\end{equation*}
as a direct sum. Manifolds with complex $b$-structures generalize the situation that arises as a result of spherical and certain anisotropic (not complex) blowups of complex manifolds at a discrete set of points or along a complex submanifold, cf. \cite[Section 2]{Me3}, \cite{Me5}, as well as (real) blow-ups of complex analytic  varieties with only point singularities.

The interior of $\M$ is a complex manifold. Its $\deebar$-complex determines a $b$-elliptic complex, the $\bdeebar$-complex, on sections of the exterior powers of the dual of $\bT^{0,1}\M$, see Section~\ref{sPreliminaries}. The indicial families $\Dbar(\sigma)$ of the $\bdeebar$-operators at a connected component $\N$ of $\partial \M$ give, for each $\sigma$, an elliptic complex, see Section~\ref{sIndicialComplex}. Their cohomology at the various values of $\sigma$ determine the asymptotics at $\N$ of tempered representatives of cohomology classes of the $\bdeebar$-complex, in particular of tempered holomorphic functions. 

Each boundary component $\N$ of $\M$ inherits from $\bT^{0,1}\M$ the following objects in the $C^\infty$ category:
\begin{enumerate}
\item an involutive vector subbundle $\Veebar\subset \C T\N$ such that $\Vee+\Veebar=\C T\N$;
\item a real nowhere vanishing vector field $\T$ such that $\Vee\cap\Veebar=\Span_\C\T$;
\item a class $\pmb \beta$ of sections of $\smash[t]{\Veebar}^*$,
\end{enumerate}
where the elements of $\pmb \beta$ have additional properties, described in (4) below. The vector bundle $\Veebar$, being involutive, determines a complex of first order differential operators $\Deebar$ on sections of the exterior powers of $\smash[t]{\Veebar}^*$, elliptic because of the second property in (1) above. To that list add
\begin{enumerate}
\item [(4)] If $\beta\in \pmb \beta$ then $\Deebar\beta=0$ and $\Im\langle \beta,\T\rangle=-1$, and if $\beta$, $\beta'\in\pmb\beta$, then $\beta'-\beta=\Deebar u$ with $u$ real-valued.
\end{enumerate}
These properties, together with the existence of a Hermitian metric on $\Veebar$ invariant under $\T$ make $\N$ behave in many ways as the circle bundle of a holomorphic line bundle over a compact complex manifold. These analogies are investigated in  \cite{Me6,Me7,Me8,Me9}. The last of these papers contains a detailed account of circle bundles from the perspective of these boundary structures. The paper \cite{Me4}, a predecessor of the present one, contains some facts studied here in more detail.

\medskip
The paper is organized as follows. Section~\ref{sPreliminaries} deals with the definition of complex $b$-structure and Section~\ref{sHolomorphicVectorBundles} with holomorphic vector bundles over complex $b$-manifolds (the latter term just means that the $b$-tangent bundle takes on a primary role over that of the usual tangent bundle). The associated Dolbeault complexes are defined in these sections accordingly.

Section~\ref{sBoundaryStructure} is a careful account of the structure inherited by the boundary.

In Section~\ref{sLocalInvariants} we show that complex $b$-structures have no formal local invariants at boundary points. The issue here is that we do not have a Newlander-Nirenberg theorem that is valid in a neighborhoods of a point of the boundary, so no explicit local model for $b$-manifolds.

Section~\ref{sIndicialComplex} is devoted to general aspects of $b$-elliptic first order complexes $A$. We introduce here the set $\spec_{b,\N}^q(A)$, the boundary spectrum of the complex in degree $q$ at the component $\N$ of $\M$, and prove basic properties of the boundary spectrum (assuming that the boundary component $\N$ is compact), including some aspects concerning Mellin transforms of $A$-closed forms. Some of these ideas are illustrated using the $b$-de Rham complex.

Section~\ref{sUnderlyingCRcomplexes} is a systematic study of the $\deebarb$-complex of CR structures on $\N$ associated with elements of the class $\pmb \beta$. Each $\beta\in \pmb\beta$ defines a CR structure, $\Kbar_\beta=\ker \beta$. Assuming that $\Veebar$ admits a $\T$-invariant Hermitian metric, we show that there is $\beta\in \pmb \beta$ such that the CR structure $\Kbar_\beta$ is $\T$-invariant.

In Section~\ref{sSpectrum} we assume that $\Veebar$ is $\T$-invariant and show that for $\T$-invariant CR structures, a theorem proved in \cite{Me9} gives that the cohomology spaces of the associated $\deebarb$-complex, viewed as the kernel of the Kohn Laplacian at the various degrees, split into eigenspaces of $-\im\Lie_\T$. The eigenvalues of the latter operator are related to the indicial spectrum of the $\bdeebar$-complex.

In Section~\ref{sIndicialCohomology} we prove a precise theorem on the indicial cohomology and spectrum for the $\bdeebar$-complex under the assumption that $\Veebar$ admits a $\T$-invariant Hermitian metric.

Finally, we have included a very short appendix listing a number of basic definitions in connection with $b$-operators.

\section{Complex $b$-structures}\label{sPreliminaries}

Let $\M$ be a smooth manifold with smooth boundary. An almost CR $b$-structure on $\M$ is a subbundle $\Wbar$ of the complexification, $\C\bT\M\to\M$ of the $b$-tangent bundle of $\M$ (Melrose \cite{RBM1,RBM2}) such that
\begin{equation}\label{bCRstructure}
\W\cap \Wbar=0
\end{equation}
with $\W=\Wbar$. If in addition 
\begin{equation}\label{bElliptic}
\W + \Wbar=\C\bT\M
\end{equation}
then we say that $\Wbar$ is an almost complex $b$-structure and write $\bT^{0,1}\M$ instead of $\Wbar$ and $\bT^{1,0}\M$ for its conjugate. As is customary, the adverb ``almost'' is dropped if $\W$ is involutive. Note that since $C^\infty(\M;\bT\M)$ is a Lie algebra, it makes sense to speak of involutive subbundles of $\bT\M$ (or its complexification). 

\begin{definition}
A complex $b$-manifold is a manifold together with a complex $b$-structure.
\end{definition}

By the Newlander-Nirenberg Theorem~\cite{NeNi57}, the interior of complex $b$-manifold is a complex manifold. However, its boundary is not a CR manifold; rather, as we shall see, it naturally carries a family of CR structures parametrized by the defining functions of $\partial\M$ in $\M$ which are positive in $\open \M$.

\medskip
That $C^\infty(\M;\bT\M)$ is a Lie algebra is an immediate consequence of the definition of the $b$-tangent bundle, which indeed can be characterized as being a vector bundle $\bT\M\to\M$ together with a vector bundle homomorphism 
\begin{equation*}
\ev:\bT\M\to T\M
\end{equation*}
covering the identity such that the induced map
\begin{equation*}
\ev_*:C^\infty(\M;\bT\M)\to C^\infty(\M;T\M)
\end{equation*}
is a $C^\infty(\M;\R)$-module isomorphism onto the submodule $C^\infty_{\tan}(\M;T\M)$ of smooth vector fields on $\M$ which are tangential to the boundary of $\M$. Since $C^\infty_{\tan}(\M,T\M)$ is closed under Lie brackets, there is an induced Lie bracket on $C^\infty(\M;\bT\M)$ The homomorphism $\ev$ is an isomorphism over the interior of $\M$, and its restriction to the boundary,
\begin{equation}\label{evbM}
\ev_{\partial\M}:\bT_{\partial\M}\M\to T\partial\M
\end{equation}
is surjective. Its kernel, a fortiori a rank-one bundle, is spanned by a canonical section denoted $\rr\partial_\rr$. Here and elsewhere, $\rr$ refers to any smooth defining function for $\partial \M$ in $\M$, by convention positive in the interior of $\M$.

Associated with a complex $b$-structure on $\M$ there is a Dolbeault complex. Let $\bWedge^{0,q}\M$ denote the $q$-th exterior power of the dual of $\bT^{0,1}M$. Then the operator
\begin{equation*}
\cdots\to C^\infty(\M;\bWedge^{0,q}\M)\xrightarrow{\bdeebar} C^\infty(\M;\bWedge^{0,q+1}\M) \to \cdots
\end{equation*}
is define by 
\begin{multline}\label{CartanFormula}
(q+1)\,\bdeebar\phi(V_0,\dotsc,V_q)=\sum_{j=0}^q V_j\phi(V_0,\dotsc,\hat V_j,\dotsc,V_q)\\
 +\sum_{j<k}(-1)^{j+k}\phi([V_j,V_k],V_0,\dotsc,\hat V_j,\dotsc,\hat V_k,\dotsc,V_q)
\end{multline}
as with the standard de Rham differential (see Helgason~\cite[p.~21]{He}) whenever $\phi$ is a smooth section of $\bWedge^q\M$ and $V_0,\dotsc,V_q\in C^\infty(\M;\bT^{0,1}\M)$. In this formula $V_j$ acts on functions via the vector field $\ev_*V_j$. The involutivity of $\bT^{0,1}\M$ is used in the the terms involving brackets, of course. The same proof that $d\circ d=0$ works here to give that $\bdeebar^2=0$. The formula
\begin{equation}
\bdeebar(f\phi)=f\,\bdeebar\phi+\bdeebar f\wedge \phi\text{ for } \phi\in C^\infty(\M;\bWedge^q\M) \text{ and } f\in C^\infty(\M),
\end{equation}
implies that $\bdeebar$ is a  first order operator.

Since we do not have at our disposal holomorphic frames (near the boundary) for the bundles of forms of type $(p,q)$ for $p>0$, we define $\bdeebar$ on forms of type $(p,q)$ with $p>0$ with the aid of the $b$-de Rham complex, exactly as in Foland and Kohn~\cite{FK} for standard complex structures and de Rham complex. The $b$-de Rham complex, we recall from Melrose~\cite{RBM2}, is the complex associated with the dual, $\C\bT^*\M$, of $\C\bT\M$,
\begin{equation*}
\cdots\to C^\infty(\M;\bWedge^r\M)\xrightarrow{\bd} C^\infty(\M;\bWedge^{r+1}\M) \to \cdots
\end{equation*}
where $\bWedge^q\M$ denotes the $r$-th exterior power of $\C\bT^*\M$. The operators $\bd$ are defined by the same formula as \eqref{CartanFormula}, now however with the $V_j\in C^\infty(\M;\C\bT\M)$. On functions $f$ we have 
\begin{equation*}
\bd f=\ev^*df.
\end{equation*}
More generally,
\begin{equation*}
\ev^* \circ d = \bd\circ \ev^*
\end{equation*}
in any degree. Also,
\begin{equation}\label{FirstOrder}
\bd(f\phi)=f\,\bd\phi+\bd f\wedge \phi\text{ for } \phi\in C^\infty(\M;\bWedge^r\M) \text{ and } f\in C^\infty(\M).
\end{equation}
It is convenient to note here that for $f\in C^\infty(\M)$,
\begin{equation}\label{VanishingOnBdy}
\text{$\bd f$ vanishes on $\partial \M$ if $f$ does.}
\end{equation}

Now, with the obvious definition,
\begin{equation}\label{SpittingOfDeRham}
\bWedge^r\M=\bigoplus_{p+q=r}\bWedge^{p,q}\M.
\end{equation}
Using the special cases 
\begin{gather*}
 \bd:C^\infty(\M;\bWedge^{0,1})\to C^\infty(\M;\bWedge^{1,1})+C^\infty(\M;\bWedge^{0,2}),\\
 \bd:C^\infty(\M;\bWedge^{1,0})\to C^\infty(\M;\bWedge^{2,0})+C^\infty(\M;\bWedge^{1,1}),
\end{gather*}
consequences of the involutivity of $\bT^{0,1}\M$ and its conjugate, one gets 
\begin{equation*}
\bd\phi\in C^\infty(\M;\bWedge^{p+1,q}\M)\oplus C^\infty(\M;\bWedge^{p,q+1}\M)\quad \text{if } \phi\in C^\infty(\M;\bWedge^{p,q}\M)
\end{equation*}
for general $(p,q)$. Let $\pi_{p,q}:\bWedge^k\M\to\bWedge^{p,q}\M$ be the projection according to the decomposition \eqref{SpittingOfDeRham}, and define
\begin{equation*}
\bdee=\pi_{p+1,q}\bd,\quad \bdeebar=\pi_{q,p+1}\bd,
\end{equation*}
so $\bd=\bdee+\bdeebar$. The operators $\bdeebar$ are identical to the $\deebar$-operators over the interior of $\M$ and with the previously defined $\bdeebar$ operators on $(0,q)$-forms, and give a complex
\begin{equation}\label{bdeebarComplex}
\cdots
\to C^\infty(\M;\bWedge^{p,q}\M) \xrightarrow{\bdeebar}
C^\infty(\M;\bWedge^{p,q+1}\M)
\to\cdots
\end{equation}
for each $p$.  On functions $f:\M\to\C$,
\begin{equation}\label{bdeebarOnFunctions}
\bdeebar f = \pi_{0,1}\, \bd f.
\end{equation}
The formula
\begin{equation}\label{bdeebarFirstOrderPrime}\tag{\ref{FirstOrder}$'$}
\bdeebar f\phi=\bdeebar f\wedge \phi+f\bdeebar\phi ,\quad f\in C^\infty(\M),\ \phi\in C^\infty(\M;\bWedge^{p,q}\M),
\end{equation}
a consequence of \eqref{FirstOrder}, implies that $\bdeebar$ is a first order operator. As a consequence of \eqref{VanishingOnBdy}, 
\begin{equation}\label{bdeebarVanishingOnBdy}\tag{\ref{VanishingOnBdy}$'$}
\text{$\bdeebar f$ vanishes on $\partial \M$ if $f$ does.}
\end{equation}

\medskip
The operators of the $b$-de Rham complex are first order operators because of \eqref{FirstOrder}, and \eqref{VanishingOnBdy} implies that these are $b$-operators, see \eqref{TotallyChar}. Likewise, \eqref{bdeebarFirstOrderPrime} and \eqref{bdeebarVanishingOnBdy} imply that in any bidegree, the operator $\phi\mapsto \rr^{-1}\,\bdeebar \,\rr\phi$ has coefficients smooth up to the boundary, so
\begin{equation}\label{bdeebarOnpq}
\bdeebar\in \Diff^1_b(\M;\bWedge^{p,q}\M,\bWedge^{p,q+1}\M),
\end{equation}
see \eqref{TotallyChar}. We also get from these formulas that the $b$-symbol of $\bdeebar$ is
\begin{equation}\label{bsymbdeebar}
\bsym(\bdeebar)(\xi)(\phi)=\im \pi_{0,1}(\xi)\wedge \phi, \quad x\in\M,\ \xi\in \bT^*_x\M,\ \phi\in \bWedge^{p,q}_x\M,
\end{equation}
see \eqref{TheBSymbol}. Since $\pi_{0,1}$ is injective on the real $b$-cotangent bundle (this follows from \eqref{bElliptic}), the complex \eqref{bdeebarComplex} is $b$-elliptic.

\section{Holomorphic vector bundles}\label{sHolomorphicVectorBundles}
The notion of holomorphic vector bundle in the $b$-category is a translation of the standard one using connections. Let $\rho:F\to\M$ be a complex vector bundle. Recall from \cite{RBM2} that a $b$-connection on $F$ is a linear operator
\begin{equation*}
\bnabla:C^\infty(\M;F)\to C^\infty(\M;\bWedge^1\M \otimes F)
\end{equation*}
such that
\begin{equation}\label{Connection}
\bnabla f\phi=f\, \bnabla\phi+ \bd f \otimes \phi
\end{equation}
for each $\phi \in C^\infty(\M;F)$ and $f\in C^\infty(\M)$. This property automatically makes $\bnabla$ a $b$-operator.

A standard connection $\nabla:C^\infty(\M;F)\to C^\infty(\M;\Wedge^1\M \otimes F)$ determines a $b$-connection by composition with
\begin{equation*}
\ev^*\otimes\Id:\Wedge^1\M \otimes F\to \bWedge^1\M \otimes F,
\end{equation*}
but $b$-connections are more general than standard connections. Indeed, the difference between the latter and the former can be any smooth section of the bundle $\Hom(F,\bWedge^1\M\otimes F)$. A $b$-connection $\bnabla$ on $F$ arises from a standard connection if and only if $\bnabla_{\rr\partial_\rr}=0$ along $\partial\M$.

As in the standard situation, the $b$-connection $\bnabla$ determines operators
\begin{equation}\label{ExtConnection}
\bnabla:C^\infty(\M;\bWedge^k\M \otimes F)\to C^\infty(\M;\bWedge^{k+1}\M \otimes F)
\end{equation}
by way of the usual formula translated to the $b$ setting:
\begin{equation}\label{ExtConnectionBis}
\bnabla(\alpha\otimes \phi) = (-1)^k \alpha \wedge \bnabla \phi + \bd\alpha \wedge \phi,\quad \phi \in C^\infty(\M;F),\ \alpha\in \bWedge^k\M.
\end{equation}
Since
\begin{equation*}
\bnabla \rr\alpha\otimes \phi=\rr\, \bnabla(\alpha\otimes \phi) + \bd \rr \wedge \alpha\otimes \phi
\end{equation*}
is smooth and vanishes on $\partial\M$, also
\begin{equation*}
\bnabla\in \Diff^1_b(\M;\bWedge^k\M\otimes F,\bWedge^{k+1}\M\otimes F).
\end{equation*}
The principal $b$-symbol of \eqref{ExtConnection}, easily computed using \eqref{ExtConnectionBis} and
\begin{equation*}
\bsym(\bnabla)(\bd f)(\phi) = \lim_{\tau\to\infty} \frac{e^{-\im\tau f}}{\tau} \bnabla e^{\im\tau f}\phi
\end{equation*}
for $f\in C^\infty(\M;\R)$ and $\phi \in C^\infty(\M;\bWedge^k\M \otimes F)$, is
\begin{equation*}
\bsym(\bnabla)(\xi)(\phi) = \im\xi\wedge \phi,\quad \xi\in \bT_x^*\M,\ \phi \in \bWedge^k_x\M \otimes F_x,\ x\in \M.
\end{equation*}

As expected, the connection is called holomorphic if the component in $\bWedge^{0,2}\M\otimes F$ of the curvature operator
\begin{equation*}
\Omega=\bnabla^2:C^\infty(\M;F)\to C^\infty(\M;\bWedge^2\M\otimes F),
\end{equation*}
vanishes. Such a connection gives $F$ the structure of a complex $b$-manifold. Its complex $b$-structure can be described locally as in the standard situation, as follows. Fix a frame $\eta_\mu$ for $F$ and let the $\omega^\nu_\mu$ be the local sections of $\bWedge^{0,1}\M$ such that
\begin{equation*}
\bdeebar \eta_\mu = \sum_\nu \omega^\nu_\mu\otimes \eta_\nu.
\end{equation*}
Denote by $\zeta^\mu$ the fiber coordinates determined by the frame $\eta_\mu$. Let $V_1,\dotsc, V_{n+1}$ be a frame of $\bT^{0,1}\M$ over $U$, denote by $\tilde V_j$ the sections of $\C\bT F$ over $\rho^{-1}(U)$ which project on the $V_j$ and satisfy $\tilde V_j\zeta^\mu=\tilde V_j\overline \zeta^\mu=0$ for all $\mu$, and by $\partial_{\zeta^\mu}$ the vertical vector fields such that $\partial_{\zeta^\mu}\zeta^\nu=\delta^\nu_\mu$ and $\partial_{\zeta^\mu}\overline \zeta^\nu=0$. Then the sections
\begin{equation}\label{LocalbT01E}
\tilde V_j-\sum_{\mu,\nu}\zeta^\mu \langle \omega^\nu_\mu, V_j\rangle \partial_{\zeta^\nu},\ j=1,\dotsc,n+1,\quad \partial_{\overline \zeta^\nu},\ \nu=1,\dotsc,k
\end{equation}
of $\C\bT F$ over $\rho^{-1}(U)$ form a frame of $\bT^{0,1}F$. As in the standard situation, the involutivity of this subbundle of $\C\bT F$ is equivalent to the condition on the vanishing of the $(0,2)$ component of the curvature of $\bnabla$. A vector bundle $F\to\M$ together with the complex $b$-structure determined by a choice of holomorphic $b$-connection (if one exists at all) is a holomorphic vector bundle.

The $\deebar$ operator of a holomorphic vector bundle is
\begin{equation*}
\bdeebar = (\pi_{0,q+1}\otimes \Id)\circ \bnabla : C^\infty(\M;\bWedge^{0,q}\M\otimes F)\to C^\infty(\M;\bWedge^{0,q+1}\M \otimes F).
\end{equation*}
As is the case for standard complex structures, the condition on the curvature of $\bnabla$ implies that these operators form a complex, $b$-elliptic since
\begin{equation*}
\bsym(\bdeebar)(\xi)(\phi) = \im\pi_{0,1}(\xi)\wedge \phi,\quad \xi\in \bT_x^*\M,\ \phi \in \bWedge^k_x\M \otimes F_x,\ x\in \M
\end{equation*}
and $\pi_{0,1}(\xi)=0$ for $\xi\in \bT^*\M$ if and only if $\xi=0$.

Also as usual, a $b$-connection $\bnabla$ on a Hermitian vector bundle $F\to\M$ with Hermitian form $h$ is Hermitian if
\begin{equation*}
\bd h(\phi,\psi)= h(\bnabla \phi,\psi)+h(\phi,\bnabla\psi)
\end{equation*}
for every pair of smooth sections $\phi$, $\psi$ of $F$. In view of the definition of $\bd$ this means that for every $v\in \C\bT\M$ and sections as above,
\begin{equation*}
\ev(v) h(\phi,\psi)= h(\bnabla_{\!v} \phi,\psi)+h(\phi,\bnabla_{\!\overline v}\psi)
\end{equation*}
On a complex $b$-manifold $\M$, if an arbitrary connection $\bnabla'$ and the Hermitian form $h$ are given for a vector bundle $F$, holomorphic or not, then there is a unique \emph{Hermitian} $b$-connection $\bnabla$ such that $\pi_{0,1}\bnabla = \pi_{0,1}\bnabla'$. Namely, let $\eta_\mu$ be a local orthonormal frame of $F$, let
\begin{equation*}
(\pi_{0,1}\otimes\Id)\circ \bnabla'\eta_\mu = \sum_\nu\omega^\nu_\mu \otimes \eta_\nu,
\end{equation*}
and let $\bnabla$ be the connection defined in the domain of the frame by
\begin{equation}\label{HermitianConnectionForms}
\bnabla\eta_\mu =(\omega^\nu_\mu-\overline \omega^\mu_\nu)\otimes \eta_\nu.
\end{equation}
If the matrix of functions $Q=[q^\mu_\lambda]$ is unitary and $\tilde \eta_\lambda=\sum_\mu q^\mu_\lambda \eta_\mu$, then 
\begin{equation*}
(\pi_{0,1}\otimes\Id)\circ \bnabla'\tilde \eta_\lambda = \sum_\nu\tilde \omega^\sigma_\lambda \otimes \tilde \eta_\sigma
\end{equation*}
with
\begin{equation*}
\tilde \omega^\sigma_\lambda = \sum_\mu \overline q^\mu_\sigma\,\bdeebar q^\mu_\lambda + \sum_{\mu,\nu} \overline q^\mu_\sigma q^\nu_\lambda\omega^\mu_\nu,
\end{equation*}
using \eqref{Connection}, that $Q^{-1}=[\overline q^\mu_\lambda]$, and that $\pi_{0,1}\bd f=\bdeebar f$. Thus
\begin{align*}
\tilde \omega^\sigma_\lambda-\overline {\tilde \omega}^\lambda_\sigma
&= \sum_\mu (\overline q^\mu_\sigma\,\bdeebar q^\mu_\lambda -  q^\mu_\lambda\,\bdee \overline q^\mu_\sigma)  + \sum_{\mu,\nu} (\overline q^\mu_\sigma q^\nu_\lambda\omega^\mu_\nu -  q^\mu_\lambda \overline q^\nu_\sigma\overline \omega^\mu_\nu)\\
&=\sum_\mu (\bdeebar q^\mu_\lambda + \bdee q^\mu_\lambda)\overline q^\mu_\sigma  + \sum_{\mu,\nu} q^\nu_\lambda ( \omega^\mu_\nu - \overline \omega^\nu_\mu)\overline q^\mu_\sigma\\
&=\sum_\mu \bd q^\mu_\lambda + \overline q^\mu_\sigma  + \sum_{\mu,\nu} q^\nu_\lambda ( \omega^\mu_\nu - \overline \omega^\nu_\mu)\overline q^\mu_\sigma
\end{align*}
using that $\overline{\bdeebar f}=\bdee \overline f$ and that $\sum_\mu  q^\mu_\lambda\,\bdee \overline q^\mu_\sigma = - \sum_\mu \bdee q^\mu_\lambda\, \overline q^\mu_\sigma$ because $\sum_\mu q^\mu_\lambda \overline q^\mu_\sigma$ is constant, and that $\bdeebar q^\mu_\lambda + \bdee q^\mu_\lambda=\bd q^\mu_\lambda$. Thus there is a globally defined Hermitian connection locally given by \eqref{HermitianConnectionForms}. We leave to the reader to verify that this connection is Hermitian. Clearly $\bnabla$ is the unique Hermitian connection such that $\pi_{0,1}\bnabla = \pi_{0,1}\bnabla'$. When $\bnabla'$ is a holomorphic connection, $\bnabla$ is the unique Hermitian holomorphic connection.

\begin{lemma}
The vector bundles $\bWedge^{p,0}\M$ are holomorphic.
\end{lemma}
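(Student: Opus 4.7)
The plan is to construct, for each $p$, a $b$-connection on $F=\bWedge^{p,0}\M$ whose curvature has vanishing $(0,2)$-component; by the discussion preceding the lemma this endows $\bWedge^{p,0}\M$ with a holomorphic vector bundle structure. Under the canonical identification $\bWedge^{p,1}\M\cong \bWedge^{0,1}\M\otimes\bWedge^{p,0}\M$ via $\alpha\otimes\beta\mapsto\alpha\wedge\beta$, the operator
\[
\bdeebar\colon C^\infty(\M;\bWedge^{p,0}\M)\to C^\infty(\M;\bWedge^{p,1}\M)
\]
from \eqref{bdeebarComplex} is the natural candidate for the $(0,1)$-part of such a connection: by \eqref{bdeebarFirstOrderPrime} it satisfies the required Leibniz rule.

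First I would pick any $b$-connection $\bnabla'$ on $\bWedge^{p,0}\M$ (existence is standard via a partition of unity applied to local trivializations), and then set
\[
\bnabla=(\bnabla'-\pi_{0,1}\bnabla')+\bdeebar.
\]
A short verification using \eqref{Connection} for $\bnabla'$ together with $\pi_{0,1}\bd f=\bdeebar f$ shows that $\bnabla$ is itself a $b$-connection, and by construction $\pi_{0,1}\bnabla=\bdeebar$.

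The crux is the computation of the $(0,2)$-component of $\Omega=\bnabla^2$. Extending $\bnabla$ to $F$-valued forms by \eqref{ExtConnectionBis}, decompose $\bnabla\phi=\pi_{1,0}\bnabla\phi+\bdeebar\phi$ and project $\bnabla\bnabla\phi$ onto $\bWedge^{0,2}\M\otimes F$. The $\pi_{1,0}\bnabla\phi$ summand contributes nothing to $\pi_{0,2}\bnabla^2\phi$ for type reasons: both wedging with a $(1,0)$-form and applying $\bd$ to a $(1,0)$-form land in $\bWedge^{2,0}\M\oplus \bWedge^{1,1}\M$. For the $\bdeebar\phi$ summand, writing it locally as $\sum_i \omega_i\otimes\phi_i$ with $\omega_i\in \bWedge^{0,1}\M$, formula \eqref{ExtConnectionBis} and the identity $\bdeebar\omega_i=\pi_{0,2}\bd\omega_i$ reassemble the $(0,2)$-projection, under the wedge/tensor identification above, into exactly $\bdeebar^2\phi$, which vanishes because \eqref{bdeebarComplex} is a complex.

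The main obstacle will be the bookkeeping of signs and types: one must verify cleanly that mixed $(1,0)$--$(0,1)$ cross-terms do not survive the $\pi_{0,2}$-projection, and that the tensor convention in \eqref{ExtConnectionBis} matches the wedge convention implicit in the $\bdeebar$-complex so that the identification really intertwines $\pi_{0,2}\bnabla(\sum_i\omega_i\otimes\phi_i)$ with $\bdeebar\bigl(\sum_i\omega_i\wedge\phi_i\bigr)$. Once this is settled, $\pi_{0,2}\Omega=\bdeebar^2=0$, and $\bnabla$ is the sought-after holomorphic $b$-connection on $\bWedge^{p,0}\M$.
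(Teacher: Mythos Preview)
Your argument is correct and follows the same essential idea as the paper: build a $b$-connection on $\bWedge^{p,0}\M$ whose $(0,1)$-part is the operator $\bdeebar$ of \eqref{bdeebarComplex}, and then observe that the $(0,2)$-component of its curvature is $\bdeebar^2=0$. The only difference is in how the connection is produced. You start from an arbitrary $b$-connection $\bnabla'$ and replace its $(0,1)$-part by $\bdeebar$; the paper instead fixes an auxiliary Hermitian metric, writes $\bdeebar\eta_\mu=\sum_\nu\omega^\nu_\mu\wedge\eta_\nu$ in a local orthonormal frame, and invokes the construction \eqref{HermitianConnectionForms} just developed to obtain the unique Hermitian $b$-connection with $(0,1)$-part $\bdeebar$. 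The paper then reads off the vanishing of the $(0,2)$-curvature from the identity $\bdeebar\omega^\nu_\mu+\sum_\lambda\omega^\nu_\lambda\wedge\omega^\lambda_\mu=0$, which is exactly $\bdeebar^2=0$ expressed in the frame. Your route is slightly more elementary (no metric needed) at the cost of the type-bookkeeping you flag; the paper's route reuses the Hermitian machinery it has just set up and simultaneously identifies the resulting connection as the Hermitian holomorphic one, which is used later.
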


We prove this by exhibiting a holomorphic $b$-connection. Fix an auxiliary Hermitian metric on $\bWedge^{p,0}\M$ and pick an orthonormal frame $(\eta_\mu)$ of $\bWedge^{p,0}\M$ over some open set $U$. Let $\omega^\nu_\mu$ be the unique sections of $\bWedge^{0,1}\M$ such that
\begin{equation*}
\bdeebar\eta_\mu =\sum_\nu \omega^\nu_\mu\wedge\eta_\nu,
\end{equation*}
and let $\bnabla$ be the $b$-connection defined on $U$ by the formula \eqref{HermitianConnectionForms}. As in the previous paragraph, this gives a globally defined $b$-connection. That it is holomorphic follows from
\begin{equation*}
\bdeebar \omega^\nu_\mu + \sum_\lambda \omega^\nu_\lambda\wedge \omega^\lambda_\mu = 0,
\end{equation*}
a consequence of $\bdeebar^2=0$. Evidently, with the identifications $\bWedge^{0,q}\M\otimes \bWedge^{p,0}\M=\bWedge^{p,q}\M$, $\pi_{p,q+1}\bnabla$ is the $\bdeebar$ operator in \eqref{bdeebarOnpq}.

\section{The boundary a complex $b$-manifold}\label{sBoundaryStructure}

Suppose that $\M$ is a complex $b$-manifold and $\N$ is a component of its boundary. We shall assume $\N$ compact, although for the most part this is not necessary.

The homomorphism
\begin{equation*}
\ev:\C\bT\M\to \C T\M
\end{equation*}
is an isomorphism over the interior of $\M$, and its restriction to $\N$ maps onto $\C T\N$ with kernel spanned by $\rr\partial_\rr$. Write
\begin{equation*}
\ev_\N:\C\bT_{\N}\M\to \C T\N
\end{equation*}
for this restriction and 
\begin{equation}\label{bdyIso}
\Phi:\bT^{0,1}_{\N}\M\to\Veebar
\end{equation}
for of the restriction of $\ev_\N$ to $\bT^{0,1}_{\N}\M$. From \eqref{bCRstructure} and the fact that the kernel of $\ev_\N$ is spanned by the real section $\rr\partial_{\rr}$ one obtains that $\Phi$ is injective, so its image,
\begin{equation*}
\Veebar=\Phi(\bT^{0,1}_{\N}\M)
\end{equation*}
is a subbundle of $\C T\N$.

Since $\bT^{0,1}\M$ is involutive, so is $\Veebar$, see \cite[Proposition 3.12]{Me3}. From \eqref{bElliptic} and the fact that $\ev_\N$ maps onto $\C T\N$, one obtains that
\begin{equation}\label{VbarIsElliptic}
\Vee + \Veebar=\C T\N, 
\end{equation}
see \cite[Lemma 3.13]{Me3}. Thus

\begin{lemma}
$\Veebar$ is an elliptic structure.
\end{lemma}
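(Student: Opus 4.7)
The statement packages two facts already flagged in the paragraph above the lemma, so the proof will really be a tidy verification that $\Veebar$ meets the two requirements of the notion of elliptic structure: it is an involutive smooth subbundle of $\C T\N$, and $\Vee+\Veebar=\C T\N$. My plan is to check each of these in turn, after first disposing of the subbundle assertion.

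First I would verify that $\Veebar=\Phi(\bT^{0,1}_{\N}\M)$ is actually a smooth subbundle of $\C T\N$ of constant rank equal to $\rk \bT^{0,1}\M$. Since $\ev_\N$ has kernel spanned by the real section $\rr\dee_\rr$, if $v\in \bT^{0,1}_{\N}\M$ lies in $\ker \Phi$ then $v=\overline v$, forcing $v\in \bT^{0,1}_{\N}\M\cap\overline{\bT^{0,1}_{\N}\M}=0$ by \eqref{bCRstructure}. Hence $\Phi$ is fiberwise injective, so a smooth vector bundle isomorphism onto its image; in particular $\Veebar$ is a smooth subbundle of $\C T\N$.

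Next, involutivity. Given sections $V,W$ of $\Veebar$ on an open set $U\subset\N$, use the isomorphism $\Phi$ to produce sections $\tilde V,\tilde W$ of $\bT^{0,1}_{\N}\M$ over $U$ and then extend these to smooth sections of $\bT^{0,1}\M$ on a neighborhood of $U$ in $\M$ (possible because $\bT^{0,1}\M$ is a smooth subbundle of $\C\bT\M$). The involutivity of $\bT^{0,1}\M$ gives $[\tilde V,\tilde W]\in C^\infty(\M;\bT^{0,1}\M)$, and since $\ev_*$ is a Lie algebra homomorphism from $C^\infty(\M;\bT\M)$ to tangential vector fields on $\M$, its complexification satisfies $\ev_*[\tilde V,\tilde W]=[\ev_*\tilde V,\ev_*\tilde W]$. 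Restricting to $\N$ and using that $\ev_*\tilde V$ and $\ev_*\tilde W$ are tangent to $\N$ with restrictions $V$ and $W$, we conclude $[V,W]\in C^\infty(U;\Veebar)$.

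Finally, the ellipticity $\Vee+\Veebar=\C T\N$. Given $p\in\N$ and $v\in \C T_p\N$, surjectivity of the restriction $\ev_\N\colon \C\bT_p\M\to \C T_p\N$ produces $\tilde v\in \C\bT_p\M$ with $\ev_\N(\tilde v)=v$; decomposing $\tilde v=\tilde v^{1,0}+\tilde v^{0,1}$ via \eqref{bElliptic} and applying $\ev_\N$ yields $v=\ev_\N(\tilde v^{1,0})+\ev_\N(\tilde v^{0,1})\in \Vee_p+\Veebar_p$. No step here is a real obstacle: the potentially delicate point is the interaction between the Lie bracket on $\bT\M$ and the map $\ev_*$, but this is exactly what the characterization of the $b$-tangent bundle recalled earlier in the paper provides, so the argument is essentially bookkeeping around the properties of $\Phi$ and $\ev_\N$.
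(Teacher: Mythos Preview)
Your proposal is correct and follows essentially the same approach as the paper: the paper establishes the subbundle property, involutivity, and $\Vee+\Veebar=\C T\N$ in the paragraph preceding the lemma (deferring the details to \cite[Proposition~3.12 and Lemma~3.13]{Me3}) and then simply observes that these are precisely the defining conditions of an elliptic structure. You have supplied those deferred details directly, and your arguments---injectivity of $\Phi$ from the reality of $\ker\ev_\N$, involutivity via $\ev_*$ being a Lie algebra homomorphism, and spanning via surjectivity of $\ev_\N$ plus \eqref{bElliptic}---are the natural ones.
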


This just means what we just said: $\Veebar$ is involutive and \eqref{VbarIsElliptic} holds, see Treves~\cite{Tr81,Tr92}; the sum need not be direct. All elliptic structures are locally of the same kind, depending only on the dimension of $\Vee\cap \Veebar$. This is a result of Nirenberg~\cite{Ni57} (see also H\"ormander~\cite{Ho65}) extending the Newlander-Nirenberg theorem. In the case at hand, $\Veebar\cap \Vee$ has rank $1$ because of the relation
\begin{equation*}
\rk_\C(\Vee\cap \Veebar)=2\rk_\C\Veebar-\dim\N
\end{equation*}
which holds whenever \eqref{VbarIsElliptic} holds. 
\begin{equation}\label{HypoanalyticChart}
\display{300pt}{Every $p_0\in \N$ hs a neighborhood in which there coordinates $x^1,\dotsc,x^{2n},t$ such that with $z^j=x^j+\m x^{j+n}$, the vector fields 
\begin{equation*}
\hspace*{-60pt}\frac{\partial}{\partial \overline z^1},\dotsc,\frac{\partial}{\partial \overline z^n},\ \frac{\partial }{\partial t}
\end{equation*}
span $\Veebar$ near $p_0$. The function $(z^1,\dotsc,z^n,t)$ is called a hypoanalytic chart (Baouendi, Chang, and Treves~\cite{BCT83}, Treves~\cite{Tr92}).}
\end{equation}

The intersection $\Veebar\cap \Vee$ is, in the case we are discussing, spanned by a canonical globally defined real vector field.
Namely, let $\rr\partial_\rr$ be the canonical section of $\bT\M$ along $\N$. There is a unique section $J\rr\partial_\rr$ of $\bT\M$ along $\N$ such that $\rr\partial_\rr+\im J\rr \partial_\rr$ is a section of $\bT^{0,1}\M$ along $\N$. Then
\begin{equation*}
\T=\ev_\N(J\rr\partial_\rr)
\end{equation*}
is a nonvanishing real vector field in $\Vee\cap \Veebar$, (see \cite[Lemma 2.1]{Me4}). Using the isomorphism \eqref{bdyIso} we have
\begin{equation*}
\T=\Phi(J(\rr\partial_\rr)-\im\rr\partial_\rr).
\end{equation*}

Because $\Veebar$ is involutive, there is yet another complex, this time associated with the exterior powers of the dual of $\Veebar$:
\begin{equation}\label{bdyComplex}
\cdots
\to C^\infty(\N;\Wedge^q \smash[t]{\Veebar}^*)\xrightarrow{\Deebar}
C^\infty(\N;\Wedge^{q+1}\smash[t]{\Veebar}^*)
\to\cdots,
\end{equation}
where $\Deebar$ is defined by the formula \eqref{CartanFormula} where now the $V_j$ are sections of $\Veebar$. The complex \eqref{bdyComplex} is elliptic because of \eqref{VbarIsElliptic}. For a function $f$ we have $\Deebar f=\iota^*df$, where $\iota^*:\C T^*\N\to \smash[t]{\Veebar}^*$ is the dual of the inclusion homomorphism $\iota:\Veebar\to\C T\N$.

For later use we show:
\begin{lemma}\label{ConstantSolutions}
Suppose that $\N$ is compact and connected. If $\zeta:\N\to\C$ solves $\Deebar\zeta=0$, then $\zeta$ is constant.
\end{lemma}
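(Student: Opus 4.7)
The plan is to reduce the statement to the classical maximum modulus principle for holomorphic functions by using the hypoanalytic charts recalled in \eqref{HypoanalyticChart}. In such a chart $(z^1,\dotsc,z^n,t)$ near $p_0\in \N$, the bundle $\Veebar$ is spanned by $\partial/\partial\overline z^1,\dotsc,\partial/\partial\overline z^n,\partial/\partial t$, so the condition $\Deebar\zeta=0$, i.e.\ $\iota^*d\zeta=0$, is equivalent to
\begin{equation*}
\frac{\partial \zeta}{\partial \overline z^j}=0\ (j=1,\dotsc,n),\qquad \frac{\partial \zeta}{\partial t}=0.
\end{equation*}
Thus $\zeta$ is locally a holomorphic function of $z^1,\dotsc,z^n$ alone.

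Next I would run the usual maximum-principle argument. Because $\N$ is compact, the continuous function $|\zeta|$ attains its maximum $M$ on $\N$; let $K=\{p\in\N:|\zeta(p)|=M\}$. Clearly $K$ is closed and nonempty. To show $K$ is open, take $p_0\in K$ and a hypoanalytic chart around it; in these coordinates $\zeta$ is a holomorphic function of $(z^1,\dotsc,z^n)$ (constant in $t$) which attains its maximum modulus at an interior point, so by the classical maximum modulus theorem it is constant in a neighborhood. Hence a full chart neighborhood of $p_0$ lies in $K$, so $K$ is open. Connectedness of $\N$ then forces $K=\N$, i.e.\ $|\zeta|\equiv M$.

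Finally, a holomorphic function of constant modulus is constant, so $\zeta$ is locally constant in each hypoanalytic chart; since $\N$ is connected, $\zeta$ is constant on all of $\N$.

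The only step that needs any care is the appeal to the hypoanalytic chart: one must invoke the Nirenberg/Newlander--Nirenberg extension cited right after \eqref{VbarIsElliptic} to know such a chart exists (and to know that $\partial/\partial t$ really corresponds to the rank-one direction $\Vee\cap\Veebar$). Once that local model is in hand, the proof is a straightforward maximum-modulus/connectedness argument and involves no genuine obstacle.
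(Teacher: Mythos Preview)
Your proof is correct and follows essentially the same route as the paper: pass to a hypoanalytic chart so that $\Deebar\zeta=0$ becomes $\partial_t\zeta=0$ and $\partial_{\overline z^j}\zeta=0$, then apply the maximum modulus principle and a closed/open connectedness argument. The only cosmetic difference is that the paper runs the open--closed argument on the level set $\{\zeta=c\}$ directly (where $c=\zeta(p_0)$), whereas you first show $\{|\zeta|=M\}=\N$ and then invoke the constant-modulus fact to conclude $\zeta$ is locally constant; both work equally well.
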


\begin{proof}
Let $p_0$ be an extremal point of $|\zeta|$. Fix a hypoanalytic chart $(z,t)$ for $\Veebar$ centered at $p_0$. Since $\Deebar \zeta=0$, $\zeta(z,t)$ is independent of $t$ and $\partial_{\overline z^\nu}\zeta=0$. So there is a holomorphic function $Z$ defined in a neighborhood of $0$ in $\C^n$ such that $\zeta=Z\circ z$. Then $|Z|$ has a maximum at $0$, so $Z$ is constant near $0$. Therefore $\zeta$ is constant, say $\zeta(p)=c$, near $p_0$. Let $C=\set{p:\zeta(p)=c}$, a closed set. Let $p_1\in C$. Since $p_1$ is also an extremal point of $\zeta$, the above argument gives that $\zeta$ is constant near $p_1$, therefore equal to $c$. Thus $C$ is open, and consequently $\zeta$ is constant on $\N$.
\end{proof}

Since the operators $\bdeebar:C^\infty(\M,\bWedge^{0,q}\M)\to C^\infty(\M,\bWedge^{0,q+1}\M)$ are totally characteristic, they induce operators
\begin{equation*}
\bdeebar_b:C^\infty(\N,\bWedge^{0,q}_\N\M)\to C^\infty(\M,\bWedge^{0,q+1}_\N\M),
\end{equation*}
see \eqref{Pb}; these boundary operators define a complex because of \eqref{PQb}. By way of the dual
\begin{equation}\label{DualbdyIso}
\Phi^*:\smash[t]{\Veebar}^*\to \bWedge^{0,1}_\N\M
\end{equation}
of the isomorphism \eqref{bdyIso} the operators $\bdeebar_b$ become identical to the operators of the $\Deebar$-complex \eqref{bdyComplex}: The diagram
\begin{equation*}
\begin{CD}
\cdots &@>>> C^\infty(\N;\Wedge^q \smash[t]{\Veebar}^*)&@>\Deebar>>C^\infty(\N;\Wedge^{q+1}\smash[t]{\Veebar}^*)&@>>>&\cdots\\
&& &@V{\Phi^*}VV&@VV{\Phi^*}V&\\
\cdots &@>>> C^\infty(\N,\bWedge^{0,q}_\N\M)&@>\bdeebar_b>>C^\infty(\M,\bWedge^{0,q+1}_\N\M)&@>>>&\cdots
\end{CD}
\end{equation*}
is commutative and the vertical arrows are isomorphisms. This can be proved by writing the $\bdeebar$ operators using Cartan's formula \eqref{CartanFormula} for $\bdeebar$ and $\Deebar$ and comparing the resulting expressions.

Let $\rr:\M\to\R$ be a smooth defining function for $\partial \M$, $\rr>0$ in the interior of $\M$. Then $\bdeebar \rr$ is smooth and vanishes on $\partial \M$, so $\frac{\bdeebar \rr}{\rr}$ is also a smooth $\bdeebar$-closed section of $\bWedge^{0,1}\M$. Thus we get a $\Deebar$-closed element
\begin{equation}\label{DefinitionBeta}
\beta_\rr=[\Phi^*]^{-1}\frac{\bdeebar \rr}{\rr} \in C^\infty(\partial \M;\smash[t]{\Veebar}^*).
\end{equation}
By definition,
\begin{equation*}
\langle \beta_\rr,\T\rangle=\langle \frac{\bdeebar \rr}{\rr},J(\rr\partial_\rr)-\im\rr\partial_\rr\rangle.
\end{equation*}
Extend the section $\rr\partial_\rr$ to a section of $\bT\M$ over a neighborhood $U$ of $\N$ in $\M$ with the property that $\rr\partial_\rr\rr=\rr$. In $U$ we have
\begin{equation*}
\langle \bdeebar \rr,J(\rr\partial_\rr)-\im\rr\partial_\rr\rangle = (J(\rr\partial_\rr)-\im\rr\partial_\rr)\rr = J(\rr\partial_\rr)\rr -\im\rr.
\end{equation*}
The function $J(\rr\partial_\rr)\rr$ is smooth, real-valued, and vanishes along the boundary. So $\rr^{-1}J(\rr\partial_\rr)\rr$ is smooth, real-valued. Thus
\begin{equation*}
\langle\beta_\rr,\T\rangle = a_\rr-\im
\end{equation*}
on $\N$ for some smooth function $a_\rr:\N\to\R$, see \cite[Lemma 2.5]{Me4}.

If $\rr'$ is another defining function for $\partial \M$, then $\rr'=\rr e^u$ for some smooth function $u:\M\to\R$. Then
\begin{equation*}
\bdeebar \rr'=e^u\,\bdeebar \rr+ e^u\rr\,\bdeebar u
\end{equation*}
and it follows that
\begin{equation*}
\beta_{\rr'}=\beta_\rr+ \Deebar u.
\end{equation*}
In particular,
\begin{equation*}
a_{\rr'}=a_\rr+ \T u.
\end{equation*}
Let $\a_t$ denote the one-parameter group of diffeomorphisms generated by $\T$.

\begin{proposition}\label{Averages}
The functions $a^{\sup}_\av$, $a^{\inf}_\av:\N\to \R$ defined by
\begin{equation*}
a^{\sup}_\av(p) =\limsup_{t\to\infty}\frac{1}{2t}\int_{-t}^t a_{\rr}(\a_s(p))\,ds,\quad a^{\inf}_\av(p) =\liminf_{t\to\infty}\frac{1}{2t}\int_{-t}^t a_{\rr}(\a_s(p))\,ds
\end{equation*}
are invariants of the complex $b$-structure, that is, they are independent of the defining function $\rr$. The equality $a^{\sup}_\av = a^{\inf}_\av$ holds for some $\rr$ if and only if it holds for all $\rr$. 
\end{proposition}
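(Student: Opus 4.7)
The plan is to reduce everything to the transformation law $a_{\rr'} = a_\rr + \T u$ (where $\rr' = \rr e^u$ with $u \in C^\infty(\M;\R)$) that has just been derived, and then exploit the fact that $\T u$ is an exact time-derivative along orbits of the flow $\a_s$.

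First I would observe that for any $p \in \N$ and any $s \in \R$,
\begin{equation*}
(\T u)(\a_s(p)) = \frac{d}{ds} u(\a_s(p)),
\end{equation*}
since $\T$ generates the flow $\a_s$. Therefore the fundamental theorem of calculus gives
\begin{equation*}
\frac{1}{2t}\int_{-t}^{t} (\T u)(\a_s(p))\, ds = \frac{u(\a_t(p)) - u(\a_{-t}(p))}{2t}.
\end{equation*}
Because $\N$ is compact and $u$ is smooth, $u$ is bounded on $\N$, and the right hand side tends to $0$ as $t \to \infty$, uniformly in $p$.

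Combining this with $a_{\rr'} = a_\rr + \T u$, for every $p \in \N$ we get
\begin{equation*}
\frac{1}{2t}\int_{-t}^{t} a_{\rr'}(\a_s(p))\, ds = \frac{1}{2t}\int_{-t}^{t} a_{\rr}(\a_s(p))\, ds + o(1),
\end{equation*}
so passing to $\limsup$ and $\liminf$ yields $a^{\sup}_{\av,\rr'}(p) = a^{\sup}_{\av,\rr}(p)$ and similarly for the $\liminf$. This is the invariance statement. The second assertion is then immediate: the condition $a^{\sup}_\av = a^{\inf}_\av$ at a point (or everywhere) is intrinsic, i.e.\ independent of the choice of $\rr$, because both sides individually are.

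I do not expect any real obstacle; the only delicate point is to make sure one remembers that any two defining functions differ by $e^u$ with $u$ globally smooth on $\M$ (so that the restriction to $\N$ is bounded), which is why the telescoping estimate collapses. No compactness of $\M$ is needed, only of $\N$.
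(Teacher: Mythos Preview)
Your proposal is correct and follows essentially the same argument as the paper: both reduce to the transformation law $a_{\rr'}=a_\rr+\T u$, recognize $(\T u)(\a_s(p))$ as $\tfrac{d}{ds}u(\a_s(p))$, and conclude via the fundamental theorem of calculus and boundedness of $u$ on the compact $\N$ that the difference of time averages tends to zero. The paper's proof is just the one-line version of what you wrote.
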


Indeed,
\begin{equation*}
\lim_{t\to\infty}\Big( \frac{1}{2t}\int_{-t}^t a_{\rr'}(\a_s(p))\,ds - \frac{1}{2t}\int_{-t}^t a_{\rr}(\a_s(p))\,ds\Big) = \lim_{t\to\infty}\frac{1}{2t}\int_{-t}^t \frac{d}{ds}u(\a_s(p))\,ds = 0
\end{equation*}
because $u$ is bounded (since $\N$ is compact).

The functions $a^{\sup}_\av$, $a^{\inf}_\av$ are constant on orbits of $\T$, but they may not be smooth.

\begin{example}\label{AnisotropicSphere}
Let $\N$ be the unit sphere in $\C^{n+1}$ centered at the origin. Write $(z^1,\dotsc,z^{n+1})$ for the standard coordinates in $\C^{n+1}$. Fix $\tau_1,\dotsc,\tau_{n+1}\in \R\minus 0$, all of the same sign, and let
\begin{equation*}
\T= \im\sum_{j=1}^{n+1} \tau_j(z^j\partial_{z^j}-\overline z^j\partial_{\overline z^j}).
\end{equation*}
This vector field is real and tangent to $\N$. Let $\Kbar$ be the standard CR structure of $\N$ as a submanifold of $\C^{n+1}$ (the part of $T^{0,1}\C^{n+1}$ tangential to $\N$). The condition that the $\tau_j$ are different from $0$ and have the same sign ensures that $\T$ is never in $\K\oplus \Kbar$. Indeed, the latter subbundle of $\C T\N$ is the annihilator of the pullback to $\N$ of $\im \deebar \sum_{\ell=1}^{n+1}|z^\ell|^2$. The pairing of this form with $\T$ is
\begin{equation*}
\langle \im \sum_{\ell=1}^{n+1} z^\ell d\overline z^\ell,\im\sum_{j=1}^{n+1} \tau_j(z^j\partial_{z^j}-\overline z^j\partial_{\overline z^j})\rangle = \sum_{j=1}^{n+1} \tau_j|z^j|^2,
\end{equation*}
a function that vanishes nowhere if and only if all $\tau_j$ are different from zero and have the same sign. Thus $\Veebar=\Kbar\oplus \Span_\C\T$ is a subbundle of $\C T\N$ of rank $n+1$ with the property that $\Vee+\Veebar=\C T\N$. To show that $\Veebar$ is involutive we first note that $\Kbar$ is the annihilator of the pullback to $\N$ of the span of the differentials $dz^1,\dotsc,dz^{n+1}$. Let $\Lie_\T$ denote the Lie derivative with respect to $\T$. Then $\Lie_\T dz^j =\im \tau_j dz^j$, so if $L$ is a CR vector field, then so is $[L,\T]$. Since in addition $\Kbar$ and $\Span_\C \T$ are themselves involutive, $\Veebar$ is involutive. Thus $\Veebar$ is an elliptic structure with $\Vee\cap \Veebar=\Span_\C\T$. Let $\beta$ be the section of $\smash[t]{\Veebar}^*$ which vanishes on $\Kbar$ and satisfies $\langle \beta,\T\rangle=-\im$. Let $\Deebar$ denote the operators of the associated differential complex. Then $\Deebar\beta=0$, since $\beta$ vanishes on commutators of sections of $\Kbar$ (since $\Kbar$ is involutive) and on commutators of $\T$ with sections of $\Kbar$ (since such commutators are in $\Kbar$).

If the $\tau_j$ are positive (negative), this example may be viewed as the boundary of a blowup (compactification) of $\C^{n+1}$, see \cite{Me5}.
\end{example}

\medskip
Let now $\rho:F\to\M$ be a holomorphic vector bundle. Its $\bdeebar$-complex also determines a complex along $\N$,
\begin{equation}\label{defDeebarBundle}
\cdots \to C^\infty(\N;\Wedge^q\smash[t]{\Veebar}^*\otimes F_\N)\xrightarrow{\Deebar} C^\infty(\N;\Wedge^{q+1}\smash[t]{\Veebar}^*\otimes F_\N)\to\cdots,
\end{equation}
where $\Deebar$ is defined using the boundary operators $\bdeebar_b$ and the isomorphism \eqref{DualbdyIso}:
\begin{equation}\label{defDeebarE}
\Deebar (\phi\otimes \eta) = (\Phi^*)^{-1}\bdeebar_b [\Phi^* (\phi\otimes \eta)]
\end{equation}
where $\Phi^*$ means $\Phi^*\otimes\Id$. These operators can be expressed locally in terms of the operators of the complex \eqref{bdyComplex}. Fix a smooth frame $\eta_\mu$, $\mu=1,\dotsc,k$, of $F$ in a neighborhood $U\subset \M$ of $p_0\in \N$, and suppose
\begin{equation*}
\bdeebar \eta_\mu = \sum_\nu\omega^\nu_\mu\otimes \eta_\nu.
\end{equation*}
The $\omega^\nu_\mu$ are local sections of $\bWedge^{0,1}\M$, and if $\sum_\mu \phi^\mu\otimes \eta_\mu$ is a section of $\bWedge^{0,q}\M\otimes F$ over $U$, then
\begin{equation*}
\bdeebar\sum\phi^\mu\otimes\eta_\mu = \sum_\nu (\bdeebar \phi^\nu +\sum_\mu \omega^\nu_\mu\wedge \phi^\mu)\otimes \eta_\nu.
\end{equation*}
Therefore, using the identification \eqref{DualbdyIso}, the boundary operator $\bdeebar_b$ is the operator given locally by
\begin{equation}\label{DeebarE}
\Deebar\sum\phi^\mu\otimes\eta_\mu = \sum_\nu (\Deebar \phi^\nu +\sum_\mu \omega^\nu_\mu\wedge \phi^\mu)\otimes \eta_\nu
\end{equation}
where now the $\phi^\mu$ are sections of $\bWedge^q\smash[t]{\Veebar}^*$, the $\omega^\nu_\mu$ are the sections of $\smash[t]{\Veebar}^*$ corresponding to the original $\omega^\nu_\mu$ via $\Phi^*$, and $\Deebar$ on the right hand side of the formula is the operator associated with $\Veebar$.

The structure bundle $\bT^{0,1} F$ is locally given as the span of the sections \eqref{LocalbT01E}. Applying the evaluation homomorphism $\C\bT_{\partial F} F\to \C T\partial F$ (over $\N$) to these sections gives vector fields on $F_\N$ forming a frame for the elliptic structure $\Veebar_F$ inherited by $F_\N$. Writing $V_j^0=\ev V_j$, this frame is just
\begin{equation}\label{LocalVbarE}
\tilde V_j^0-\sum_{\mu,\nu}\zeta^\mu \langle \omega^\nu_\mu, V_j^0\rangle \partial_{\zeta^\nu},\ j=1,\dotsc,n+1,\quad \partial_{\overline \zeta^\nu},\ \nu=1,\dotsc,k,
\end{equation}
where now the $\omega^\nu_\mu$ are the forms associated to the $\Deebar$ operator of $F_\N$. Alternatively, one may take the $\Deebar$ operators of $F_\N$ and use the formula \eqref{DeebarE} to define a subbundle of $\C TF$ locally as the span of the vector fields \eqref{LocalVbarE}, a fortiori an elliptic structure on $F_\N$, involutive because
\begin{equation*}
\Deebar\omega^\nu + \sum_\lambda \omega^\nu_\lambda\wedge \omega^\lambda_\mu = 0.
\end{equation*}

To obtain a formula for the canonical real vector field $\T_F$ in $\Veebar_F$, let $J_F$ be the almost complex $b$-structure of $\bT F$ and consider again the sections \eqref{LocalbT01E}; they are defined in an open set $\rho^{-1}(U)$, $U$ a neighborhood in $\M$ of a point of $\N$. Since the elements $\partial_{\overline\zeta^\nu}$ are sections of $\bT^{0,1}F$,
\begin{equation}\label{JEVert}
J_F\Re\partial_{\overline\zeta^\nu}=\Im\partial_{\overline\zeta^\nu}.
\end{equation}
Pick a defining function $\rr$ for $\N$. Then $\tilde\rr=\rho^*\rr$ is a defining function for $F_\N$. We may take $V_{n+1}=\rr\partial_\rr+\im J\rr\partial_\rr$ along $U\cap \N$. Then $\tilde V_{n+1}=\tilde \rr\partial_{\tilde \rr}+\im \widetilde {J\rr \partial_\rr}$ along $\rho^{-1}(U)\cap F_\N$ and so
\begin{multline*}
J_F\Re\big(\tilde \rr\partial_{\tilde \rr} + \im \widetilde{J\rr\partial_\rr}-\sum_{\mu,\nu} \zeta^\mu\langle\omega^\nu_\mu,\rr\partial_\rr+\im J\rr\partial_\rr\rangle\partial_{\zeta^\nu}\big) =\\ \Im\big(\tilde \rr\partial_{\tilde \rr} + \im \widetilde{J\rr\partial_\rr}-\sum_{\mu,\nu} \zeta^\mu\langle\omega^\nu_\mu,\rr\partial_\rr+\im J\rr\partial_\rr\rangle\partial_{\zeta^\nu}\big)
\end{multline*}
along $\rho^{-1}(U)\cap F_\N$. Using \eqref{JEVert} this gives
\begin{equation*}
J_F\tilde \rr\partial_{\tilde \rr} =
\widetilde{J\rr\partial_\rr} - 2\Im\sum _{\mu,\nu} \zeta^\mu\langle\omega^\nu_\mu,\rr\partial_\rr+\im J\rr\partial_\rr\rangle\partial_{\zeta^\nu}.
\end{equation*}
Applying the evaluation homomorphism gives
\begin{equation}\label{TE}
\T_F =
\tilde\T - 2\Im\sum _{\mu,\nu} \zeta^\mu\langle\omega^\nu_\mu,\rr\partial_\rr+\im J\rr\partial_\rr\rangle\partial_{\zeta^\nu}
\end{equation}
where $\tilde \T$ is the real vector field on $\rho^{-1}(U\cap \N)=\rho^{-1}(U)\cap F_\N$ which projects on $\T$ and satisfies $\tilde \T\zeta^\mu=0$ for all $\mu$.

Let $h$ be a Hermitian metric on $F$, and suppose that the frame $\eta_\mu$ is orthonormal. Applying $\T_E$ as given in \eqref{TE} to the function $|\zeta|^2=\sum|\zeta^\mu|^2$ we get that $\T_F$ is tangent to the unit sphere bundle of $F$ if and only if
\begin{equation*}
\langle\omega^\nu_\mu,\rr\partial_\rr+\im J\rr\partial_\rr\rangle - \overline{\langle\omega^\mu_\nu,\rr\partial_\rr+\im J\rr\partial_\rr\rangle} = 0
\end{equation*}
for all $\mu, \nu$. Equivalently, in terms of the isomorphism \eqref{DualbdyIso},
\begin{equation}\label{ExactMetricCondition}
\langle(\Phi^*)^{-1}\omega^\nu_\mu,\T \rangle + \overline{\langle(\Phi^*)^{-1}\omega^\mu_\nu,\T\rangle} = 0\quad \text{ for all }\mu,\nu.
\end{equation}
\begin{definition}\label{ExactMetric}
The Hermitian metric $h$ will be called exact if \eqref{ExactMetricCondition} holds.
\end{definition}

The terminology in Definition \ref{ExactMetric} is taken from the notion of exact Riemannian $b$-metric of Melrose~\cite[pg. 31]{RBM2}. For such metrics, the Levi-Civita $b$-connection has the property that $\bnabla_{\rr\partial_\rr}=0$ [op. cit., pg. 58]. We proceed to show that the Hermitian holomorphic connection of an exact Hermitian metric on $F$ also has this property. Namely, suppose that $h$ is an exact Hermitian metric, and let $\eta_\mu$ be an orthonormal frame of $F$. Then for the Hermitian holomorphic connection we have
\begin{equation*}
\langle\omega^\nu_\mu-\overline \omega^\mu_\nu,\rr\partial_\rr\rangle = \langle\omega^\nu_\mu,\rr\partial_\rr\rangle - \overline{\langle \omega^\mu_\nu,\rr\partial_\rr\rangle}
=\frac{1}{2}\big( \langle\omega^\nu_\mu,\rr\partial_\rr+\im J\rr\partial_\rr\rangle - \overline{\langle \omega^\mu_\nu,\rr\partial_\rr+\im J\rr\partial_\rr\rangle} \big)
\end{equation*}
using that the $\omega^\nu_\mu$ are of type $(0,1)$. Thus  $\bnabla_{\rr\partial_\rr}=0$.

\section{Local invariants}\label{sLocalInvariants}

Complex structures have no local invariants: every point of a complex $n$-manifold has a neighborhood biholomorphic to a ball in $\C^n$ It is natural to ask the same question about complex $b$-structures, namely,
\begin{equation*}
\display{300pt}{is there a local model depending only on dimension for every complex $b$-stucture? }
\end{equation*}
In lieu of a Newlander-Nirenberg theorem, we show that complex $b$-structures have no local formal invariants at the boundary. More precisely:

\begin{proposition}\label{NoLocalFormalInvariants}
Every $p_0\in \N$ has a neighborhood $V$ in $\M$ on which there are smooth coordinates $x^j$, $j=1,\dotsc,2n+2$ centered at $p_0$ with $x^{n+1}$ vanishing on $V\cap\N$ such that with
\begin{equation}\label{NoLocalFormalInvariants1}
\overline L^0_j=\frac{1}{2}(\partial_{x^j}+\im\partial_{x^{j+n+1}}),\ j\leq n,\quad
\overline L^0_{n+1} = \frac{1}{2}(x^{n+1}\partial_{x^{n+1}}+\im\partial_{x^{2n+2}})
\end{equation}
there are smooth functions $\gamma^j_k$ vanishing to infinite order on $V\cap\N$ such that
\begin{equation*}
\overline L_j=\overline L_j^0+\sum_{k=1}^{n+1}\gamma^k_j L_k^0
\end{equation*}
is a frame for $\bT^{0,1}\M$ over $V$.
\end{proposition}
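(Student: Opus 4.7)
The plan is to construct, equivalently to the frame $\overline L_j=\overline L^0_j+\sum_k\gamma^k_j L^0_k$ with $\gamma^k_j=O(\rr^\infty)$, a system of functions $w^1,\dotsc,w^n,\tilde F$ near $p_0$ that are \emph{holomorphic to infinite order at $\N$} and play the role of the model's holomorphic data. The argument consists of a geometric setup with zeroth-order agreement, a formal power series induction in $\rr$, and a Borel summation.

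For the setup, apply \eqref{HypoanalyticChart} near $p_0\in\N$ to obtain coordinates $(z^1,\dotsc,z^n,t)$ on $\N$ with $\partial/\partial\bar z^j$ and $\partial/\partial t$ framing $\Veebar$. Since $\T$ and $\partial_t$ are both nowhere-vanishing real sections of the real line bundle $\Vee\cap\Veebar$, one has $\T=c\,\partial_t$ with $c$ of constant sign, and reparametrizing $t$ along $\T$-orbits achieves $\T=\partial_t$. Fix a defining function $\rr$, extend $z^j$ and $t$ smoothly to $\M$, and set $x^j=\Re z^j$, $x^{j+n+1}=\Im z^j$ for $j\le n$, $x^{n+1}=\rr$, $x^{2n+2}=t$. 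The commutative diagram of Section~\ref{sBoundaryStructure} identifying $\bdeebar_b$ with $\Deebar$ under $\Phi^*$ gives $\bdeebar\tilde z^j|_\N=0$ for the extensions $\tilde z^j$, while $\bdeebar\rr=\rr\,\omega_0$ (with $\omega_0$ smooth) yields $\bdeebar F|_\N=0$ for $F:=\rr e^{\im t}$. Because the model vectors $\overline L^0_a$ annihilate $\tilde z^1,\dotsc,\tilde z^n,F$ identically on $\M$, because $\bT^{0,1}\M|_\N$ annihilates them at $\N$, and because $\bd\tilde z^1,\dotsc,\bd\tilde z^n,\bd F$ are $\C$-linearly independent at $\N$, the two rank-$(n+1)$ bundles agree on $\N$. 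The projection $\pi_0$ from $\bT^{0,1}\M$ to $\bT^{0,1}_0$ along $\bT^{1,0}_0$ is then an isomorphism near $\N$, producing the unique frame $\overline L_j$ with $\pi_0\overline L_j=\overline L^0_j$, of the required form with $\gamma^k_j|_\N=0$.

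For the induction in $\rr$, I would construct formal series $w^j=\tilde z^j+\sum_{m\ge 1}\rr^m\phi^j_m$ and $\tilde F=F+\sum_{m\ge 1}\rr^{m+1}\psi_m$ so that $\bdeebar w^j$ and $\bdeebar\tilde F$ vanish to all orders at $\N$. Writing $\bdeebar w^j=\rr^m\alpha^j_m+O(\rr^{m+1})$ and using the identity $\bdeebar(\rr^m\phi)=\rr^m(m\,\omega_0\,\phi+\bdeebar\phi)$, the condition that $\bdeebar(w^j+\rr^m\phi^j_m)$ be $O(\rr^{m+1})$ becomes, after restriction to $\N$ and identification via $\Phi^*$,
\begin{equation*}
\Deebar\phi^j_m|_\N+m\,\phi^j_m|_\N\,\beta_\rr=-(\Phi^*)^{-1}\alpha^j_m|_\N.
\end{equation*}
The compatibility of the right-hand side under $\Deebar+m\,\beta_\rr\wedge$ is the $\rr^m$-coefficient at $\N$ of $\bdeebar^2 w^j=0$, using $\Deebar\beta_\rr=0$. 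The analogous equations for $\tilde F$ have $m$ replaced by $m+1$ owing to the extra factor of $\rr$ in $F$.

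Local solvability of $\Deebar u+m\,u\,\beta_\rr=\mu$ near $p_0$, for $\mu$ satisfying $\Deebar\mu+m\,\beta_\rr\wedge\mu=0$, follows from the $\Deebar$-Poincar\'e lemma: in the hypoanalytic chart a $\Deebar$-closed $(0,1)$-form is locally of the form $\Deebar v$ by integrating first in $t$ and then applying the classical $\bar\partial$-Poincar\'e lemma in the $\bar z^j$, so in particular there exists $\Psi$ with $\Deebar\Psi=\beta_\rr$, and the conjugation identity $\Deebar+m\beta_\rr\wedge=e^{-m\Psi}\Deebar\,e^{m\Psi}$ reduces the perturbed equation to the ordinary one. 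Borel summation then produces smooth $w^j,\tilde F$ with $\bdeebar w^j,\bdeebar\tilde F=O(\rr^\infty)$, and the identities $\overline L_j w^k=\gamma^k_j$ for $k\le n$ and $\overline L_j\tilde F=\gamma^{n+1}_j\tilde F$, together with $\tilde F$ vanishing to first order on $\N$, yield $\gamma^k_j=O(\rr^\infty)$ for all $j,k$. The step I expect to be most delicate is the inductive bookkeeping, in particular verifying that the compatibility condition on $(\Phi^*)^{-1}\alpha^j_m|_\N$ at each stage is precisely the one supplied by the $\rr^m$-coefficient of $\bdeebar^2 w^j=0$, and handling the nonlinear coupling between corrections to the $w^j$ and to $\tilde F$ in the full involutivity identity.
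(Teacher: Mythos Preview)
Your overall strategy is the same as the paper's: build functions that are holomorphic to infinite order at $\N$, then read off the frame.  The organizational difference is that the paper first solves $-\im\Deebar\alpha=\beta_\rr$ once and uses $\rr e^{\im\alpha}$ as the expansion parameter (so that $\bdeebar(\rr e^{\im\alpha})=(\rr e^{\im\alpha})^2\phi$ already gains an extra order), making the inductive equations untwisted $\Deebar$-equations; you instead expand in powers of $\rr$ and absorb the defect into the twisted equation $\Deebar u+m\,u\,\beta_\rr=\mu$, which you then untwist with $e^{m\Psi}$.  Since your $\Psi$ is exactly the paper's $-\im\alpha$, the two inductions are equivalent.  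Your compatibility check at each stage (coming from $\bdeebar^2=0$) is correct.

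There is, however, a genuine gap in your last paragraph.  You fix the coordinates $x^j$ at the outset from the extensions $\tilde z^j$, $\rr$, $t$, and define $\overline L^0_j$, $L^0_j$, and hence $\gamma^k_j$, in \emph{those} coordinates.  Then the identities $\overline L_j w^k=\gamma^k_j$ and $\overline L_j\tilde F=\gamma^{n+1}_j\tilde F$ are simply false: since $w^k=\tilde z^k+O(\rr)$ one only gets $\overline L_j w^k=\gamma^k_j+\overline L^0_j(w^k-\tilde z^k)+\sum_a\gamma^a_j L^0_a(w^k-\tilde z^k)$, and the extra terms are $O(\rr)$, not $O(\rr^\infty)$.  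From $\overline L_j w^k=O(\rr^\infty)$ you can only conclude $\gamma^k_j=O(\rr)$, which you already knew.  What is missing is precisely the step the paper carries out explicitly: use the corrected functions $w^j$ and $\tilde F$ to \emph{define new coordinates} (set $x^j=\Re w^j$, $x^{j+n+1}=\Im w^j$, and extract $x^{n+1}$, $x^{2n+2}$ from $\tilde F=x^{n+1}e^{\im x^{2n+2}}$ via $x^{n+1}=|\tilde F|$ and $x^{2n+2}=\arg(\tilde F/\rr)$, say), and redefine $\overline L^0_j$, $L^0_j$ accordingly.  In those new coordinates your identities do hold exactly, and the conclusion follows.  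The proposition asserts the existence of coordinates with the stated property, so this change of coordinates is not cosmetic but the essential closing step; in the paper it is the passage from the corrected $\zeta^j$ and $\log\rr+\im\alpha+f$ to the new $x^j$, followed by the linear-algebra argument with the forms $\eta^j$.
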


The proof will require some preparation. Let $\rr:\M\to\R$ be a defining function for $\partial\M$. Let $p_0\in \N$, pick a hypoanalytic chart $(z,t)$ (cf. \eqref{HypoanalyticChart}) centered at $p_0$ with $\T t=1$. Let $U\subset \N$ be a neighborhood of $p_0$ contained in the domain of the chart, mapped by it to $B\times(-\delta,\delta)\subset \C^n\times\R$, where $B$ is a ball with center $0$ and $\delta$ is some small positive number. For reference purposes we state

\begin{lemma}\label{LocalExactBStructure} On such $U$, the problem
\begin{equation*}
\Deebar \phi = \psi, \quad \psi\in C^\infty(U;\Wedge^{q+1}\smash[t]{\Veebar}^*|_U)\text{ and }\Deebar \psi=0
\end{equation*}
has a solution in $C^\infty(U;\Wedge^q\smash[t]{\Veebar}^*|_U)$.
\end{lemma}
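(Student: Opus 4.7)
The plan is to exploit the hypoanalytic chart to reduce $\Deebar$ to a combination of a standard Cauchy-Riemann operator in $z$ and a $t$-derivative, and then apply the classical Dolbeault lemma twice. On $U\cong B\times(-\delta,\delta)\subset \C^n\times\R$ the bundle $\Veebar$ is framed by $\partial_{\overline z^1},\dots,\partial_{\overline z^n},\partial_t$, with dual frame $d\overline z^1,\dots,d\overline z^n,dt$ of $\smash[t]{\Veebar}^*$, and $\Deebar(dt)=0$ since $\Deebar^2 t=0$. Every smooth section of $\Wedge^k\smash[t]{\Veebar}^*|_U$ decomposes uniquely as $\omega=\omega'+\omega''\wedge dt$ with $\omega',\omega''$ of pure $d\overline z$ type. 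Writing $\psi=\mu+\nu\wedge dt$ and seeking $\phi=\alpha+\beta\wedge dt$ ($\beta$ absent when $q=0$), a brief Leibniz computation yields
\[
\Deebar \phi \;=\; \deebar_z\alpha \;+\; \bigl((-1)^q\partial_t\alpha+\deebar_z\beta\bigr)\wedge dt,
\]
where $\deebar_z$ is the standard Cauchy-Riemann operator on $B$ acting coefficient-wise. The hypothesis $\Deebar\psi=0$ thus becomes $\deebar_z\mu=0$ together with $\deebar_z\nu=(-1)^q\partial_t\mu$, while the equation $\Deebar\phi=\psi$ decouples into $\deebar_z\alpha=\mu$ and $\deebar_z\beta=\nu-(-1)^q\partial_t\alpha$.

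For $q\geq 1$, the idea is to invoke the classical Dolbeault lemma on the ball $B$ with smooth dependence on the parameter $t$ in order to produce $\alpha$ solving $\deebar_z\alpha=\mu$; the compatibility relation then gives $\deebar_z\bigl(\nu-(-1)^q\partial_t\alpha\bigr)=0$, so a second application of Dolbeault furnishes $\beta$. For $q=0$, $\phi=\alpha$ is a scalar function and $\beta$ is absent; I would first solve $\deebar_z\alpha_0=\mu$, then observe that $h:=\nu-\partial_t\alpha_0$ is $\deebar_z$-closed and hence holomorphic in $z$ for each $t$, and correct via
\[
\alpha \;:=\; \alpha_0+\int_0^t h(\cdot,s)\,ds,
\]
which preserves $\deebar_z\alpha=\mu$ while forcing $\partial_t\alpha=\nu$.

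The only point requiring nontrivial external input is the parameter-dependent Dolbeault lemma on $B$, namely that $\deebar$-closed forms of positive bidegree on the ball, depending smoothly on an auxiliary parameter, admit $\deebar$-primitives smooth in the same parameter. This is classical, via explicit Bochner-Martinelli-Koppelman integral formulas or H\"ormander's $L^2$-theory combined with interior elliptic regularity. Shrinking $B$ and $\delta$ if necessary is harmless since only a local statement is being asserted.
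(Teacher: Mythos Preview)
Your argument is correct. The decomposition $\phi=\alpha+\beta\wedge dt$ in the hypoanalytic chart, the computation of $\Deebar$ as $\deebar_z$ plus a $\partial_t$-contribution, and the two-step application of the parametric Dolbeault lemma (with the integral correction in the case $q=0$) are all sound; the compatibility $\deebar_z\big(\nu-(-1)^q\partial_t\alpha\big)=0$ follows exactly as you wrote from $\deebar_z\nu=(-1)^q\partial_t\mu$ and $\deebar_z\alpha=\mu$.

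There is nothing to compare against: the paper does not prove this lemma. It is stated ``for reference purposes'' and treated as a known fact---local exactness of the $\Deebar$-complex of an elliptic structure in a hypoanalytic chart, a direct consequence of Nirenberg's complex Frobenius theorem and the standard Dolbeault--Grothendieck lemma (see Treves's monograph cited in the paper). Your write-up is precisely the kind of elementary verification one would give if a proof were demanded; the only external ingredient, the Dolbeault lemma on a ball with smooth dependence on a real parameter, is indeed classical and can be obtained from the Bochner--Martinelli--Koppelman kernel as you indicate.
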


Extend the functions $z^j$ and $t$ to a neighborhood of $p_0$ in $\M$. Shrinking $U$ if necessary, we may assume that in some neighborhood $V$ of $p_0$ in $\M$ with $V\cap\partial\M=U$, $(z,t,\rr)$ maps $V$ diffeomorphically onto $B\times(-\delta,\delta)\times [\neutral{]}0,\e\neutral{(})$ for some $\delta$, $\e>0$. Since the form $\beta_\rr$ defined in \eqref{DefinitionBeta} is $\Deebar$-closed, there is $\alpha\in C^\infty(U)$ such that
\begin{equation*}
-\im\Deebar \alpha=\beta_\rr.
\end{equation*}
Extend $\alpha$ to $V$ as a smooth function. The section
\begin{equation}\label{StartNoInv}
\bdeebar(\log \rr+\im\alpha)=\frac{\bdeebar \rr}{\rr}+\im\bdeebar\alpha
\end{equation}
of $\bWedge^{0,1}\M$ over $V$ vanishes on $U$, since $\beta_\rr+\im \Deebar \alpha=0$. So there is a smooth section $\phi$ of $\bWedge^{0,1}\M$ over $V$ such that
\begin{equation*}
\bdeebar(\log \rr +\im\alpha)=\rr e^{\im\alpha}\phi.
\end{equation*}
Suppose $\zeta:U\to\C$ is a solution of $\Deebar \zeta=0$ on $U$, and extend it to $V$. Then $\bdeebar\zeta$ vanishes on $U$, so again we have
\begin{equation*}
\bdeebar\zeta=\rr e^{\im\alpha}\psi.
\end{equation*}
for some smooth section $\psi$ of $\bWedge^{0,1}\M$ over $V$. The following lemma will be applied for $f_0$ equal to $\log\rr+\im\alpha$ or each of the functions $z^j$.

\begin{lemma}
Let $f_0$ be smooth in $V\minus U$ and suppose that $\bdeebar f_0=\rr e^{\im\alpha}\psi_1$ with $\psi_1$ smooth on $V$. Then there is $f:V\to\C$ smooth vanishing at $U$ such that $\bdeebar(f_0+f)$ vanishes to infinite order on $U$.
\end{lemma}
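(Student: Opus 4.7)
The plan is to construct $f$ as a Borel sum $f \sim \sum_{k\geq 1} f_k$ with $f_k = -\rr^k e^{\im k\alpha} g_k$, where the smooth functions $g_k$ on $V$ are chosen inductively so that the error $E_k := \bdeebar(f_0 + f_1 + \cdots + f_k)$ has the normalized form $E_k = \rr^{k+1} e^{\im(k+1)\alpha}\psi_{k+1}$ for some smooth section $\psi_{k+1}$ of $\bWedge^{0,1}\M$ over $V$. The base case $E_0 = \rr e^{\im\alpha}\psi_1$ is the hypothesis of the lemma.

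For the inductive step, set $w = \log\rr + \im\alpha$, so that $e^w = \rr e^{\im\alpha}$ and, by \eqref{StartNoInv} and the choice of $\alpha$ with $-\im\Deebar\alpha = \beta_\rr$, one has $\bdeebar w = e^w\phi$ with $\phi\in C^\infty(V;\bWedge^{0,1}\M)$. From $\bdeebar E_k = 0$ and the Leibniz rule I get $\bdeebar \psi_{k+1} = -(k+1)\,e^w\phi \wedge \psi_{k+1}$; restricting to $U$ and identifying $\bWedge^{0,1}_U\M$ with $\Veebar^*$ via $\Phi^*$, the right-hand side vanishes because $e^w|_U = 0$, so $\Deebar(\psi_{k+1}|_U) = 0$. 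Lemma~\ref{LocalExactBStructure} then furnishes $g_{k+1}$ on $U$ with $\Deebar g_{k+1} = \psi_{k+1}|_U$, which I extend smoothly to $V$. Setting $f_{k+1} = -e^{(k+1)w}g_{k+1}$, the computation $\bdeebar(e^{(k+1)w}g_{k+1}) = (k+1)\,e^{(k+2)w}\phi\,g_{k+1} + e^{(k+1)w}\bdeebar g_{k+1}$, combined with $\psi_{k+1} - \bdeebar g_{k+1} = \rr\sigma = e^w e^{-\im\alpha}\sigma$ for smooth $\sigma$ (since the boundary values match), regroups the remaining terms into $E_{k+1} = e^{(k+2)w}\psi_{k+2}$ with $\psi_{k+2}$ smooth, closing the induction.

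Having produced $f_k \in C^\infty(V)$ vanishing to order $k$ on $U$, I apply standard Borel summation (via cutoffs $\chi(\rr/\varepsilon_k)$ with $\varepsilon_k\to 0$ fast enough to secure $C^\infty$ convergence) to obtain $f \in C^\infty(V)$ vanishing on $U$ such that $f - \sum_{j=1}^N f_j = O(\rr^{N+1})$ in $C^\infty$ for every $N$. Since $\bdeebar$ is a first-order $b$-operator, this implies $\bdeebar(f_0 + f) - E_N = O(\rr^{N+1})$, and together with $E_N = O(\rr^{N+1})$ it follows that $\bdeebar(f_0 + f) = O(\rr^{N+1})$ for every $N$, i.e., vanishes to infinite order on $U$.

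The delicate point, and the place where the hypotheses accumulated before the lemma are fully used, is the normalization: the factor $e^{\im(k+1)\alpha}$ forced into the ansatz at stage $k$ is exactly what makes the tautological consequence $\bdeebar E_k = 0$ collapse, upon restriction to $U$, to the \emph{untwisted} equation $\Deebar(\psi_{k+1}|_U) = 0$. Without this normalization one would instead obtain a twisted equation involving $\beta_\rr\wedge \psi_{k+1}|_U$, and Lemma~\ref{LocalExactBStructure} would not apply directly; the ansatz $-\rr^k e^{\im k\alpha}g_k$ is the precise device that cancels this twist at every stage of the induction.
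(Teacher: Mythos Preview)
Your proof is correct and follows essentially the same approach as the paper's: both construct $f$ as a Borel sum of terms $(\rr e^{\im\alpha})^k$ times smooth coefficients, with the inductive step powered by the identity $\bdeebar(\rr e^{\im\alpha}) = (\rr e^{\im\alpha})^2\phi$ to show that $\psi_{k+1}|_U$ is $\Deebar$-closed, and then by Lemma~\ref{LocalExactBStructure} to solve $\Deebar g_{k+1} = \psi_{k+1}|_U$. Your final paragraph articulating why the $e^{\im k\alpha}$ normalization untwists the boundary equation is a helpful addition, but the argument itself is the same as the paper's.
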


\begin{proof} Suppose that $f_1,\dotsc,f_{N-1}$ are defined on $V$ and that
\begin{equation}\label{StepN}
\bdeebar \sum_{k=0}^{N-1}(\rr e^{\im\alpha})^k f_k = (\rr e^{\im\alpha})^N\psi_N
\end{equation}
holds with $\psi_N$ smooth in $V$; by the hypothesis, \eqref{StepN} holds when $N=1$. Using \eqref{StartNoInv} we get that $\bdeebar(\rr e^{\im\alpha})=(\rr e^{\im\alpha})^2\phi$, therefore
\begin{equation*}
0=\bdeebar\big((\rr e^{\im\alpha})^N\psi_N) = (\rr e^{\im\alpha})^N[\bdeebar\psi_N + N\rr e^{\im\alpha}\phi\wedge \psi_N],
\end{equation*}
which implies that $\bdeebar\psi_N=0$ on $U$.  With arbitrary $f_N$ we have
\begin{equation*}
\bdeebar\sum_{k=0}^{N}(\rr e^{\im\alpha})^k f_k = (\rr e^{\im\alpha})^N(\psi_N  + \bdeebar f_N + N \rr e^{\im\alpha}f_N \phi).
\end{equation*}
Since $\Deebar\psi_N=0$ and $H_{\Deebar}^1(U)=0$ by Lemma~\ref{LocalExactBStructure}, there is a smooth function $f_N$ defined in $U$ such that $\Deebar f_N=-\psi_N$ in $U$. So there is $\chi_N$ such that $\psi_N + \bdeebar f_N = \rr e^{\im\alpha}\chi_N$. With such $f_N$, \eqref{StepN} holds with $N+1$ in place of $N$ and some $\psi_{N+1}$. Thus there is a sequence $\{f_j\}_{j=1}^\infty$ such that \eqref{StepN} holds for each $N$. Borel's lemma then gives $f$ smooth with
\begin{equation*}
f\sim \sum_{k=1}^\infty (\rr e^{\im\alpha})^k f_k\quad\text{ on }U
\end{equation*}
such that $\Deebar(f_0+f)$ vanishes to infinite order on $U$.
\end{proof}

\begin{proof}[Proof of Proposition~\ref{NoLocalFormalInvariants}]
Apply the lemma with $f_0=\log\rr+\im\alpha$ to get a function $f$ such that $\bdeebar (f_0+f)$ vanishes to infinite order at $U$. Let
\begin{equation*}
x^{n+1}=\rr e^{-\Im\alpha+\Re f},\quad x^{2n+2}=\Re\alpha+\Im f.
\end{equation*}
These functions are smooth up to $U$.

Applying the lemma to each of the functions $f_0=z^j$, $j=1,\dotsc,n$ gives smooth functions $\zeta^j$ such that $\zeta^j=z^j$ on $U$ and $\bdeebar \zeta^j=0$ to infinite order at $U$. Define
\begin{equation*}
x^j=\Re \zeta^j,\quad x^{j+n+1}=\Im\zeta^j,\quad j=1,\dotsc,n.
\end{equation*}
The functions $x^j$, $j=1\dots,2n+2$ are independent, and the forms
\begin{equation*}
\eta^j=\bd\zeta^j,j=1\dots,n,\quad \eta^{n+1}=\frac{1}{x^{n+1}e^{\im x^{2n+2}}}\bd[x^{n+1}e^{\im x^{2n+2}}]
\end{equation*}
together with their conjugates form a frame for $\C\bT\M$ near $p_0$. Let $\eta^j_{1,0}$ and $\eta^j_{0,1}$ be the $(1,0)$ and $(0,1)$ components of $\eta^j$ according to the complex $b$-structure of $\M$. Then
\begin{equation*}
\eta^j_{0,1}=\sum_k p^j_k \eta^k +q^j_k \overline {\eta}^k.
\end{equation*}
Since $\eta^j_{0,1}=\bdeebar\zeta^j$ vanishes to infinite order at $U$,
the coefficients $p^j_k$ and $q^j_k$ vanish to infinite order at $U$. Replacing this formula for $\eta^j_{0,1}$ in $\eta^j=\eta^j_{1,0}+\eta^j_{0,1}$
get
\begin{equation*}
\sum_k(\delta^j_k-p^j_k )\eta^k -\sum_k q^j_k \overline \eta^k= \eta^j_{1,0}.
\end{equation*}
The matrix $\Id-[p^j_k]$ is invertible with inverse of the form $\Id+[P^j_k]$ with $P^j_k$ vanishing to infinite order at $U$. So
\begin{equation}\label{TheForms}
\eta^j -\sum_k \gamma^j_k \overline \eta^k=\sum_k(\delta^j_k+P^j_k) \eta^k_{1,0}
\end{equation}
with suitable $\gamma^j_k$ vanishing to infinite order on $U$. Define the vector fields $\overline L_j^0$ as in \eqref{NoLocalFormalInvariants1}. The vector fields
\begin{equation*}
\overline L_j=\overline L_j^0+\sum_k \gamma^k_j L_k^0, \quad j=1,\dotsc,n+1
\end{equation*}
are independent and since $\langle \overline L_j^0,\eta^k\rangle=0$ and $\langle L_j^0,\eta^k\rangle=\delta^k_j$, they annihilate each of the forms on the left hand side of \eqref{TheForms}. So they annihilate the forms $\eta^k_{1,0}$, which proves that the $\overline L_j$ form a frame of $\bT^{0,1}\M$.
\end{proof}

\section{Indicial complexes}\label{sIndicialComplex}

Throughout this section we assume that $\N$ is a connected component of the boundary of a compact manifold $\M$. Let
\begin{equation}\label{GenericComplex}
\cdots\to C^\infty(\M;E^q)\xrightarrow{A_q} C^\infty(\M;E^{q+1})\to\cdots
\end{equation}
be a $b$-elliptic complex of operators $A_q\in \Diff^1_b(\M;E^q,E^{q+1})$; the $E^q$, $q=0,\dotsc,r$, are vector bundles over $\M$.

Note that since $A_q$ is a first order operator,
\begin{equation}\label{FirstOrderChar}
A_q(f\phi)=f A_q\phi-\im\, \bsym(A_q)(\bd f)(\phi).
\end{equation}
This formula follows from the analogous formula for the standard principal symbol and the definition of principal $b$-symbol. It follows from \eqref{FirstOrderChar} and \eqref{VanishingOnBdy} that $A_q$ defines an operator
\begin{equation*}
A_{b,q}:\Diff^1(\N;E^q_{\N},E^{q+1}_{\N}).
\end{equation*}
Fix a smooth defining function $\rr:\M\to\R$ for $\partial \M$, $\rr>0$ in the interior of $\M$, let 
\begin{equation*}
\A_q(\sigma): \Diff^1_b(\N;E^q_{\N},E^{q+1}_{\N}),\quad \sigma\in \C
\end{equation*}
denote the indicial family of $A_q$ with respect to $\rr$, see \eqref{IndicialFamily}. Using \eqref{FirstOrderChar} and defining
\begin{equation*}
\Lambda_{\rr,q}=\bsym(A_q)(\frac{\bd \rr}{\rr}),
\end{equation*}
the indicial family of $A_q$ with respect to $\rr$ is
\begin{equation}\label{DefGenericIndOp}
\A_q(\sigma) = A_{b,q}+\sigma\Lambda_{\rr,q}:C^\infty(\N;E^q_\N)\to C^\infty(\N;E^{q+1}_\N).
\end{equation}
Because of \eqref{PQb}, these operators form an elliptic complex
\begin{equation}\label{IndicialComplex}
\cdots
\to C^\infty(\N;E^q_\N) \xrightarrow{\A_q(\sigma)}
C^\infty(\N;E^{q+1}_\N)
\to\cdots
\end{equation}
for each $\sigma$ and each connected component $\N$ of $\partial \M$. The operators depend on $\rr$, but the cohomology groups at a given $\sigma$ for different defining functions $\rr$ are isomorphic. Indeed, if $\rr'$ is another defining function for $\partial \M$, then $\rr'=e^u\rr$ for some smooth real-valued function $u$, and a simple calculation gives
\begin{equation*}
(A_{b,q}+\sigma \Lambda_{\rr,q})(e^{\im\sigma u}\phi)=e^{\im\sigma u}(A_{b,q}+\sigma \Lambda_{\rr',q})\phi.
\end{equation*}
In analogy with the definition of boundary spectrum of an elliptic operator $A\in \Diff^m_b(\M;E,F)$, we have

\begin{definition}
Let $\N$ be a connected component of $\partial \M$. The family of complexes \eqref{IndicialComplex}, $\sigma\in \C$, is the indicial complex of \eqref{GenericComplex} at $\N$. For each $\sigma\in \C$ let $H^q_{\A(\sigma)}(\N)$ denote the $q$-th cohomology group of \eqref{IndicialComplex} on $\N$. The $q$-th boundary spectrum of the complex \eqref{GenericComplex} at $\N$ is the set
\begin{equation*}
\spec_{b,\N}^q(A)=\set{\sigma\in \C: H^q_{\A(\sigma)}(\N)\ne 0}.
\end{equation*}
The $q$-th boundary spectrum of $A$ is $\spec_b^q(A) = \bigcup_{\N}\spec_{b,\N}^q(A)$.
\end{definition}

The spaces $H^q_{\A(\sigma)}(\N)$ are finite-dimensional because \eqref{IndicialComplex} is an elliptic complex and $\N$ is compact. It is convenient to isolate the behavior of the indicial complex according to the components of the boundary, since the sets $\spec_{b,\N}^q(A)$ can vary drastically from component to component.

\medskip
Suppose that $\M$ is a complex $b$-manifold. Recall that since
\begin{equation*}
\bdeebar \in \Diff^1_b(\M;\bWedge^{0,q}\M,\bWedge^{0,q+1}\M),
\end{equation*}
there are induced boundary operators
\begin{equation*}
\bdeebar_b\in \Diff^1(\N;\bWedge^{0,q}_\N\M,\bWedge^{0,q+1}_\N\M)
\end{equation*}
which via the isomorphism \eqref{bdyIso} become the operators of the $\Deebar$-complex \eqref{bdyComplex}. Combining \eqref{bdeebarOnFunctions} and \eqref{bsymbdeebar} we get
\begin{equation*}
\bsym(\bdeebar)(\frac{\bd \rr}{\rr})(\phi) = \im \frac{\bdeebar\rr}{\rr}\wedge\phi
\end{equation*}
and using \eqref{DefinitionBeta} we may identify $\widehat{\bdeebar_b}(\sigma)$, given by \eqref{DefGenericIndOp}, with the operator
\begin{equation}\label{DefIndOp}
\Dbar(\sigma)\phi = \Deebar \phi + \im \sigma\beta_\rr\wedge \phi.
\end{equation}
If $E\to\M$ is a holomorphic vector bundle, then the indicial family of
\begin{equation*}
\bdeebar\in \Diff^1_b(\M;\bWedge^{0,q}\M\otimes E,\bWedge^{0,q+1}\M\otimes E)
\end{equation*}
is again given by \eqref{DefIndOp}, but using the operator $\Deebar$ of the complex \eqref{defDeebarBundle}.

\medskip
Returning to the general complex \eqref{GenericComplex}, fix a smooth positive $b$-density $\m$ on $\M$ and a Hermitian metric on each $E^q$. Let $\A_q^\star(\sigma)$ be the indicial operator of the formal adjoint, $A^\star_q$, of $A_q$. The Laplacian $\Laplacian_q$ of the complex \eqref{GenericComplex} in degree $q$ belongs to $\Diff^2_b(\M;E^q\M)$, is $b$-elliptic, and its indicial operator is
\begin{equation*}
\widehat \Laplacian_q(\sigma)=
\A_q^\star(\sigma) \A_q(\sigma)+\A_{q-1}(\sigma) \A_{q-1}^\star(\sigma).
\end{equation*}
The $b$-spectrum of $\Laplacian_q$ at $\N$, see Melrose~\cite{RBM2}, is the set
\begin{equation*}
\spec_{b,\N}(\Laplacian_q) = \set{\sigma\in \C:\widehat\Laplacian_q(\sigma):C^\infty(\N;E^q_\N)\to C^\infty(\N;E^q_\N)\text{ is not invertible}}.
\end{equation*}
Note that unless $\sigma$ is real, $\widehat \Laplacian_q(\sigma)$ is not the Laplacian of the complex \eqref{IndicialComplex}.

\begin{proposition}\label{DiscretenesOfBSpectrum}
For each $q$, $\spec_{b,\N}^q(A)\subset \spec_{b,\N}(\Laplacian_q)$. \end{proposition}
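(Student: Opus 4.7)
The plan is to prove the contrapositive: if $\widehat{\Laplacian}_q(\sigma)$ is invertible as an operator on $C^\infty(\N;E^q_\N)$, then $H^q_{\A(\sigma)}(\N)=0$, and hence $\sigma \notin \spec_{b,\N}^q(A)$. The approach is a Hodge-theoretic manipulation, but one must be careful: because $\A_q^\star(\sigma)$ is the indicial operator of the formal adjoint $A_q^\star$ rather than the functional-analytic adjoint of $\A_q(\sigma)$, the operator $\widehat{\Laplacian}_q(\sigma)$ is not literally a non-negative self-adjoint Laplacian when $\sigma\notin\R$. So the usual "harmonic = closed $\cap$ coclosed" identification is not directly available, and the argument has to proceed by pure algebra from the complex relations.

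First I would record the two algebraic identities that will be used repeatedly: the complex condition $\A_{q+1}(\sigma)\A_q(\sigma)=0$ holds for every $\sigma$ because it is the indicial family of $A_{q+1}A_q=0$; dually, taking indicial families of the identity $(A_{q+1}A_q)^\star = A_q^\star A_{q+1}^\star = 0$ gives $\A_q^\star(\sigma)\A_{q+1}^\star(\sigma)=0$ for every $\sigma$. Now let $\phi\in C^\infty(\N;E^q_\N)$ satisfy $\A_q(\sigma)\phi=0$. By the invertibility assumption there is $\psi$ with
\begin{equation*}
\phi \;=\; \widehat{\Laplacian}_q(\sigma)\psi \;=\; \A_q^\star(\sigma)\A_q(\sigma)\psi + \A_{q-1}(\sigma)\A_{q-1}^\star(\sigma)\psi. \tag{$\ast$}
\end{equation*}

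The key step is to show that the first summand $\xi := \A_q^\star(\sigma)\A_q(\sigma)\psi$ vanishes; then $(\ast)$ realizes $\phi$ as the image of $\A_{q-1}^\star(\sigma)\psi$ under $\A_{q-1}(\sigma)$, hence $[\phi]=0$ in $H^q_{\A(\sigma)}(\N)$. To see $\xi=0$, I would show $\widehat{\Laplacian}_q(\sigma)\xi = 0$ and then invoke invertibility. For the first piece $\A_q^\star(\sigma)\A_q(\sigma)\xi$: apply $\A_q(\sigma)$ to $(\ast)$, use $\A_q(\sigma)\phi=0$ and $\A_q(\sigma)\A_{q-1}(\sigma)=0$ to conclude $\A_q(\sigma)\A_q^\star(\sigma)\A_q(\sigma)\psi = 0$, that is, $\A_q(\sigma)\xi=0$, so that $\A_q^\star(\sigma)\A_q(\sigma)\xi=0$. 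For the second piece $\A_{q-1}(\sigma)\A_{q-1}^\star(\sigma)\xi$: substitute the definition of $\xi$ and use the dual complex relation $\A_{q-1}^\star(\sigma)\A_q^\star(\sigma)=0$ recorded above, so $\A_{q-1}^\star(\sigma)\xi = 0$. Combining both, $\widehat{\Laplacian}_q(\sigma)\xi=0$, forcing $\xi=0$, and the proof is complete.

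The main conceptual obstacle is precisely the non-self-adjointness of $\widehat{\Laplacian}_q(\sigma)$ for non-real $\sigma$: one cannot replace the above algebraic argument with a standard "integrate by parts and take $\|\A_q(\sigma)\psi\|^2+\|\A_{q-1}^\star(\sigma)\psi\|^2$" style proof. The mild point worth double-checking is that the dual complex identity $\A_q^\star(\sigma)\A_{q+1}^\star(\sigma)=0$ really does follow from $A_q^\star A_{q+1}^\star=0$ by the functoriality of the indicial family (this uses only that $\sigma\mapsto \A_j(\sigma)$ and $\sigma\mapsto \A_j^\star(\sigma)$ are indicial families and that the indicial family of a composition of $b$-operators is the composition of indicial families). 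Once this algebraic skeleton is in place, no analytic hypothesis beyond the invertibility of $\widehat{\Laplacian}_q(\sigma)$ is needed, and the compactness of $\N$ (ensuring finite-dimensionality and Fredholm solvability) enters only implicitly through the notion of $\spec_{b,\N}(\Laplacian_q)$ itself.
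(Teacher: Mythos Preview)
Your proof is correct, and it takes a genuinely different route from the paper's. The paper inverts $\widehat\Laplacian_q(\sigma)$ to a Green's operator $\G_q(\sigma)$, establishes the commutation $\A_q^\star(\sigma)\A_q(\sigma)\G_q(\sigma)=\G_q(\sigma)\A_q^\star(\sigma)\A_q(\sigma)$ first for real $\sigma$ (where $\widehat\Laplacian_q(\sigma)$ is an honest Hodge Laplacian and standard Hodge theory applies), and then extends this identity to complex $\sigma$ by analytic continuation, using that $\sigma\mapsto\G_q(\sigma)$ is meromorphic. From this commutation the exactness of any $\A_q(\sigma_0)$-closed $\phi$ follows. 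You instead work at a single fixed $\sigma$ and argue purely algebraically: write $\phi=\widehat\Laplacian_q(\sigma)\psi$, set $\xi=\A_q^\star(\sigma)\A_q(\sigma)\psi$, and kill $\xi$ by showing $\widehat\Laplacian_q(\sigma)\xi=0$ from the two complex identities $\A_q(\sigma)\A_{q-1}(\sigma)=0$ and $\A_{q-1}^\star(\sigma)\A_q^\star(\sigma)=0$. Your argument is more elementary: it needs no meromorphic families, no analytic continuation, and no appeal to $b$-elliptic theory beyond the bare invertibility hypothesis. The paper's approach, while less direct here, has the side benefit that the meromorphic Green's operator $\G_q(\sigma)$ it constructs is a natural object for the surrounding discussion of indicial cohomology sheaves. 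One small remark: your justification of $\A_{q-1}^\star(\sigma)\A_q^\star(\sigma)=0$ via functoriality of indicial families is fine, but you could also get it instantly from the paper's identity $\A_q^\star(\sigma)=[\A_q(\overline\sigma)]^\star$ by taking adjoints of $\A_q(\overline\sigma)\A_{q-1}(\overline\sigma)=0$.
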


Note that the set $\spec_{b,\N}(\Laplacian_q)$ depends on the choice of Hermitian metrics and $b$-density used to construct the Laplacian, but that the subset $\spec_{b,\N}^q(A)$ is independent of such choices. For a general $b$-elliptic complex \eqref{GenericComplex} it may occur that $\spec_{b,\N}^q(A)\ne \spec_{b,\N}(\Laplacian_q)$. In Example \ref{IndComplexbd} we show that $\spec_{b,\N}^q(\bd)\subset \set{0}$. As is well known, $\spec_{b,\N}(\Delta_q)$ is an infinite set if $\dim\M>1$. At the end of this section we will give an example where $\spec_{b,\N}^0(\bdeebar)$ is an infinite set. A full discussion of $\spec_{b,\N}^q(\bdeebar)$ for any $q$ and other aspects of the indicial complex of complex $b$-structures is given in Section~\ref{sIndicialCohomology}.

\begin{proof}[Proof of Proposition~\ref{DiscretenesOfBSpectrum}]
Since $\Laplacian_q$ is $b$-elliptic, the set $\spec_{b,\N}(\Laplacian_q)$ is closed and discrete. Let $H^2(\N;E^q_\N)$ be the $L^2$-based Sobolev space of order $2$. For $\sigma\notin \spec_{b,\N}(\Laplacian_q)$ let
\begin{equation*}
\G_q(\sigma):L^2(\N;E^q_\N)\to H^2(\N;E^q_\N)
\end{equation*}
be the inverse of $\widehat \Laplacian_q(\sigma)$. The map $\sigma\mapsto \G_q(\sigma)$ is meromorphic with poles in $\spec_b(\Laplacian_q)$. Since
\begin{equation*}
\A_q^\star(\sigma)= [\A_q(\overline \sigma)]^\star
\end{equation*}
the operators $\widehat \Laplacian_q(\sigma)$ are the Laplacians of the complex \eqref{IndicialComplex} when $\sigma$ is real. Thus for $\sigma\in \R\minus (\spec_{b,\N}(\Laplacian_q)\cup\spec_{b,\N}(\Laplacian_{q+1}))$ we have
\begin{equation*}
\A_{q}(\sigma) \G_{q}(\sigma)=\G_{q+1}(\sigma)\A_q(\sigma),\quad
\A_q(\sigma)^\star \G_{q+1}(\sigma) = \G_q(\sigma)\A_q^\star(\sigma)
\end{equation*}
by standard Hodge theory. Since all operators depend holomorphically on $\sigma$, the same equalities hold for $\sigma\in \mathfrak R=\C\minus(\spec_{b,\N}(\Laplacian_q)\cup \spec_{b,\N}(\Laplacian_{q+1}))$. It follows that
\begin{equation*}
\A_q^\star(\sigma)\A_q(\sigma) \G_q(\sigma) =
\G_q(\sigma) \A_q^\star(\sigma)\A_q(\sigma)
\end{equation*}
in $\mathfrak R$. By analytic continuation the equality holds on all of $\C\minus \spec_{b,\N}(\Laplacian_q)$. Thus if $\sigma_0\notin \spec_{b,\N}(\Laplacian_q)$ and $\phi$ is a $\A_q(\sigma_0)$-closed section, $\A_q(\sigma_0)\phi=0$, then the formula
\begin{equation*}
\phi=[\A_q^\star(\sigma_0) \A_q(\sigma_0)+\A_{q-1}(\sigma_0) \A_{q-1}^\star(\sigma_0)]\G_q(\sigma_0) \phi
\end{equation*}
leads to
\begin{equation*}
\phi=\A_{q-1}(\sigma_0)[\A_{q-1}^\star(\sigma_0) \G_{q}(\sigma_0)\phi].
\end{equation*}
Therefore $\sigma_0\notin \spec_{b,\N}^q(A)$.
\end{proof}

Since $\Laplacian_q$ is $b$-elliptic, the set $\spec _{b,\N}(\Laplacian_q)$ is discrete and intersects each horizontal strip $a\leq\Im\sigma\leq b$ in a finite set (Melrose~\cite{RBM2}). Consequently:

\begin{corollary}
The sets $\spec_{b,\N}^q(A)$, $q=0,1\dotsc$, are closed, discrete, and intersect each horizontal strip $a\leq\Im\sigma\leq b$ in a finite set.
\end{corollary}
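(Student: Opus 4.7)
The plan is very short: the corollary is essentially a three-line consequence of Proposition~\ref{DiscretenesOfBSpectrum} together with the Melrose fact invoked in the sentence just above the statement. First I would write down the inclusion
\begin{equation*}
\spec_{b,\N}^q(A)\subset \spec_{b,\N}(\Laplacian_q)
\end{equation*}
proved in Proposition~\ref{DiscretenesOfBSpectrum}, and recall the cited structural property of the $b$-spectrum of a $b$-elliptic operator from \cite{RBM2}: $\spec_{b,\N}(\Laplacian_q)$ has no accumulation point in $\C$, and for every pair $a<b$ the intersection $\spec_{b,\N}(\Laplacian_q)\cap\set{\sigma:a\leq \Im\sigma\leq b}$ is finite.

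From there the two properties of $\spec_{b,\N}^q(A)$ follow immediately. For finiteness in horizontal strips, the intersection $\spec_{b,\N}^q(A)\cap\set{a\leq\Im\sigma\leq b}$ is contained in the corresponding (finite) intersection for $\Laplacian_q$, so is finite. For closedness and discreteness, if $\sigma_0\in \C$ were an accumulation point of $\spec_{b,\N}^q(A)$, then it would be an accumulation point of $\spec_{b,\N}(\Laplacian_q)$, contradicting the absence of such points; hence $\spec_{b,\N}^q(A)$ has no accumulation points in $\C$, which makes it both closed and discrete as a subset of $\C$.

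There is essentially no obstacle; the only thing worth being careful about is not to confuse ``discrete'' in the sense of ``without accumulation points in $\C$'' (which is what is claimed and what follows) with the weaker subspace-topology notion. Writing the argument in terms of accumulation points in $\C$, as above, avoids any ambiguity and keeps the proof to a few lines.
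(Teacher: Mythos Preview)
Your proposal is correct and matches the paper's own argument essentially verbatim: the paper simply invokes Proposition~\ref{DiscretenesOfBSpectrum} together with Melrose's result that $\spec_{b,\N}(\Laplacian_q)$ is discrete and meets each horizontal strip in a finite set, and states the corollary as an immediate consequence. Your explicit accumulation-point reasoning for closedness and discreteness is a welcome clarification but not a different approach.
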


We note in passing that the Euler characteristic of the complex \eqref{IndicialComplex} vanishes for each $\sigma$. Indeed, let $\sigma_0\in \C$. The Euler characteristic of the $\A(\sigma_0)$-complex is the index of
\begin{equation*}
\A(\sigma_0) + \A(\sigma_0)^\star:\bigoplus_{q\text{ even}}C^\infty(\N;E^q)\to \bigoplus_{q\text{ odd}}C^\infty(\N;E^q).
\end{equation*}
The operator $\A_q(\sigma)$ is equal to $A_{b,q} + \sigma \Lambda_{\rr,q}$, see \eqref{DefGenericIndOp}. Thus $\A_q(\sigma)^\star = A_{b,q}^\star+\overline \sigma \Lambda_{\rr,q}^\star$, and it follows that for any $\sigma$,
\begin{equation*}
\A(\sigma) + \A(\sigma)^\star=
\A(\sigma_0) + \A(\sigma_0)^\star
+(\sigma-\sigma_0)\Lambda_\rr+(\overline \sigma-\overline \sigma_0)\Lambda_\rr^\star
\end{equation*}
is a compact perturbation of $\A(\sigma_0) + \A(\sigma_0)^\star$. Therefore, since the index is invariant under compact perturbations, the index of $\A(\sigma) + \A(\sigma)^\star$ is independent of $\sigma$. Then it vanishes, since it vanishes when $\sigma\notin \bigcup_q \spec_{b,\N}^q(A)$.

\medskip
Let $\Mero^q(\N)$ be the sheaf of germs of $C^\infty(\N;E^q)$-valued meromorphic functions on $\C$ and let $\Hol^q(\N)$ be the subsheaf of germs of holomorphic functions. Let $\Sing^q(\N)=\Mero^q(\N)/\Hol^q(\N)$. The holomorphic family $\sigma\mapsto \A_q(\sigma)$ gives a sheaf homomorphism $\A_q:\Mero^q(\N)\to\Mero^{q+1}(\N)$ such that $\A_q(\Hol^q(\N))\subset \Hol^{q+1}(\N)$ and $\A_{q+1}\circ \A_{q}=0$, so we have a complex
\begin{equation}\label{BdyMeroCohomologySheaf}
\cdots\to \Sing^q(\N)\xrightarrow{\A_q}\Sing^{q+1}(\N)\to\cdots.
\end{equation}

The cohomology sheafs $\mathfrak H^q_A(\N)$ of this complex contain more refined information about the cohomology of the complex $A$.

\begin{proposition}
The sheaf $\mathfrak H^q_\A(\N)$ is supported on $\spec_{b,\N}^q(A)$.
\end{proposition}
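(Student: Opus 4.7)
My plan is to show that the stalk $\mathfrak H^q_\A(\N)_{\sigma_0}$ vanishes whenever $\sigma_0\notin \spec_{b,\N}^q(A)$, by a reduction-of-pole-order argument that exploits the affine dependence of $\A_q(\sigma)$ on $\sigma$.

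Start with a germ $[\phi]\in \Sing^q(\N)_{\sigma_0}$ represented by a $C^\infty(\N;E^q_\N)$-valued meromorphic $\phi(\sigma)$ with $\A_q(\sigma)\phi(\sigma)$ holomorphic at $\sigma_0$. Expand
\[
\phi(\sigma)=\sum_{k\geq -N}\phi_k(\sigma-\sigma_0)^k,\qquad \phi_{-N}\neq 0;
\]
if $N\leq 0$ there is nothing to prove. By \eqref{DefGenericIndOp} we have
\[
\A_q(\sigma)=\A_q(\sigma_0)+(\sigma-\sigma_0)\Lambda_{\rr,q},
\]
so the most singular Laurent coefficient of $\A_q(\sigma)\phi(\sigma)$ is $\A_q(\sigma_0)\phi_{-N}(\sigma-\sigma_0)^{-N}$. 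Holomorphicity of $\A_q(\sigma)\phi(\sigma)$ at $\sigma_0$ thus forces $\A_q(\sigma_0)\phi_{-N}=0$, i.e.\ $\phi_{-N}$ determines a class in $H^q_{\A(\sigma_0)}(\N)$.

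Because $\sigma_0\notin\spec_{b,\N}^q(A)$, this cohomology group is zero, so there exists $\psi_{-N}\in C^\infty(\N;E^{q-1}_\N)$ with $\A_{q-1}(\sigma_0)\psi_{-N}=\phi_{-N}$. Set $\tilde\psi(\sigma)=\psi_{-N}(\sigma-\sigma_0)^{-N}$. Using the analogous affine decomposition $\A_{q-1}(\sigma)=\A_{q-1}(\sigma_0)+(\sigma-\sigma_0)\Lambda_{\rr,q-1}$, the function
\[
\phi_1(\sigma):=\phi(\sigma)-\A_{q-1}(\sigma)\tilde\psi(\sigma)
\]
has pole of order at most $N-1$ at $\sigma_0$, and $\A_q(\sigma)\phi_1(\sigma)=\A_q(\sigma)\phi(\sigma)$ remains holomorphic at $\sigma_0$ since $\A_q\A_{q-1}=0$ identically in $\sigma$. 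Iterating this step $N$ times produces $\psi(\sigma)\in\Mero^{q-1}(\N)_{\sigma_0}$ such that $\phi-\A_{q-1}\psi$ is holomorphic at $\sigma_0$, i.e.\ $[\phi]=\A_{q-1}[\psi]$ in $\Sing^q(\N)_{\sigma_0}$. This shows $\mathfrak H^q_\A(\N)_{\sigma_0}=0$, proving the proposition.

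There is no real obstacle: the argument is bookkeeping on Laurent tails, and the two ingredients it relies on are (i) the affine-in-$\sigma$ form of $\A_q(\sigma)$ coming from \eqref{DefGenericIndOp}, which guarantees that subtracting $\A_{q-1}(\sigma)\tilde\psi(\sigma)$ drops the pole order by exactly one rather than introducing new principal terms, and (ii) the exactness of \eqref{IndicialComplex} in degree $q$ at $\sigma_0$, which is precisely the definition of $\sigma_0\notin \spec_{b,\N}^q(A)$.
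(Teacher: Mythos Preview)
Your proof is correct and is essentially the same as the paper's: both argue by induction on the pole order, using the affine dependence $\A_q(\sigma)=\A_q(\sigma_0)+(\sigma-\sigma_0)\Lambda_{\rr,q}$ to conclude that the leading Laurent coefficient is $\A_q(\sigma_0)$-closed, then exploiting exactness at $\sigma_0$ to peel it off. The only cosmetic difference is that the paper works directly with the principal part $\sum_{k=1}^\mu \phi_k/(\sigma-\sigma_0)^k$ rather than the full Laurent expansion.
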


\begin{proof}
Let $\sigma_0\in \C$ be such that $H^q_{\A(\sigma_0)}(\N)=0$ and let
\begin{equation}\label{SingPart}
\phi(\sigma)=\sum_{k=1}^{\mu}\frac{\phi_k}{(\sigma-\sigma_0)^{k}},
\end{equation}
$\mu>0$, $\phi_k\in C^\infty(\N;\Wedge^q\smash[t]{\Veebar}^*)$, represent the $\A$-closed element $[\phi]$ of the stalk of $\Sing^q(\N)$ over $\sigma_0$. The condition that $\A_q[\phi]=0$ means that $\A_q(\sigma)\phi(\sigma)$ is holomorphic, that is,
\begin{equation*}
\frac{\A_q(\sigma_0)\phi_\mu}{(\sigma-\sigma_0)^\mu} +\sum_{k=1}^{\mu-1}\frac{\A_q(\sigma_0)\phi_k+\Lambda_{\rr,q}\phi_{k+1}}{(\sigma-\sigma_0)^{k}}=0.
\end{equation*}
In particular $\A_q(\sigma_0)\phi_\mu=0$. Since $H^q_{\A(\sigma_0)}(\N)=0$, there is $\psi_\mu\in C^\infty(\N;E^{q-1})$ such that $\A_{q-1}(\sigma_0)\psi_\mu=\phi_\mu$. This shows that if $\mu=1$, then $[\phi]$ is exact, and that if $\mu>1$, then letting $\phi'(\sigma)=\phi(\sigma)-\A_{q-1}(\sigma)\psi_\mu/(\sigma-\sigma_0)^\mu$, that $\phi$ is cohomologous to an element $[\phi']$ represented by a sum as in \eqref{SingPart} with $\mu-1$ instead of $\mu$. By induction, $[\phi]$ is exact.
\end{proof}

\begin{definition}\label{CohomologySheafs}
The cohomology sheafs $\mathfrak H^q_{A}(\N)$ of the complex \eqref{BdyMeroCohomologySheaf} will be referred to as the indicial cohomology sheafs of the complex $A$. If $[\phi]\in \mathfrak h^q_A(\N)$ is a nonzero element of the stalk over $\sigma_0$, the smallest $\mu$ such that there is a meromorphic function \eqref{SingPart} representing $[\phi]$ will be called the order of the pole of $[\phi]$.
\end{definition}

The relevancy of this notion of pole lies in that it predicts, for any given cohomology class of the complex $A$, the existence of a representative with the most regular leading term (the smallest power of log that must appear in the expansion at the boundary). We will see later (Proposition~\ref{bdeebarCohomologySheaf}) that for the $b$-Dolbeault complex, under a certain geometric assumption, the order of the pole of $[\phi]\in \mathfrak H^q_{\bdeebar}(\N)\minus 0$ is $1$.

\begin{example}\label{IndComplexbd}
For the $b$-de Rham complex one has $\spec_{b,\N}^q(\bd)\subset \set{0}$ and
\begin{equation*}
H^q_{\D(0)}(\N)=H^q_{\dR}(\N)\oplus H^{q-1}_{\dR}(\N)
\end{equation*}
for each component $\N$ of $\partial\M$, and that every element of the stalk of $\mathfrak H^q_{\bd}(\N)$ over $0$ has a representative with a simple pole. By way of the residue we get an isomorphism from the stalk over $0$ onto $H^q_{\dR}(\N)$.

Since the map \eqref{evbM} is surjective with kernel spanned by $\rr\partial_\rr$, the dual map
\begin{equation}\label{evbMDual}
\ev_{\N}^*: T^*{\N}\to \bT_{\N}^*\M
\end{equation}
is injective with image the annihilator, $\Hor$, of $\rr\partial\rr$. Let $\inner_{\rr\partial_\rr}:\bWedge_{\N}^q\M\to \bWedge_{\N}^{q-1}\M$ denote interior multiplication by $\rr\partial_\rr$ Then $\Wedge^q\Hor=\ker(\inner_{\rr\partial_\rr}:\bWedge_{\N}^q\M\to \bWedge_{\N}^{q-1}\M)$. The isomorphism \eqref{evbMDual} gives isomorphisms
\begin{equation*}
\ev_{\N}^*: \Wedge^q{\N}\to \Hor^q
\end{equation*}
for each $q$. Fix a defining function $\rr$ for $\N$ and let $\Pi:\bWedge_{\N}^q\M \to \bWedge_{\N}^q\M$ be the projection on $\Hor^q$ according to the decomposition
\begin{equation*}
\bWedge_{\N}^q\M=\Hor^q\oplus \frac{\bd\rr}{\rr}\wedge\Hor^{q-1},
\end{equation*}
that is,
\begin{equation*}
\Pi\phi = \phi-\frac{\bd\rr}{\rr}\wedge \inner_{\rr\partial_\rr}\phi.
\end{equation*}
If $\phi^0\in C^\infty(\N,\Hor^q)$ and $\phi^1\in C^\infty(\N,\Hor^{q-1})$, then
\begin{equation*}
\bd(\phi^0+\frac{\bd\rr}{\rr}\wedge\phi^1)=\Pi\,\bd\phi^0+\frac{\bd\rr}{\rr}\wedge(-\Pi\,\bd\phi^1).
\end{equation*}
Since
\begin{equation*}
\rr^{-\im\sigma}\bd\rr^{\im\sigma}\phi = \bd \phi +\im \sigma\frac{\bd\rr}{\rr}\wedge\phi,
\end{equation*}
the indicial operator $\D(\sigma)$ of $\bd$ is
\begin{equation*}
\D(\sigma)(\phi_0+\frac{\bd\rr}{\rr}\wedge\phi^1) = \Pi\,\bd\phi^0+\frac{\bd\rr}{\rr}\wedge(\im \sigma\phi^0-\Pi\, \bd\phi^1).
\end{equation*}
If $\D(\sigma)(\phi_0+\frac{\bd\rr}{\rr}\wedge\phi^1)=0$, then of course $\Pi\bd\phi^0=0$ and $\im \sigma\phi^0=\Pi\bd\phi^1$, and it follows that if $\sigma\ne 0$, then
\begin{equation*}
(\phi_0+\frac{\bd\rr}{\rr}\wedge\phi^1) = \D(\sigma)\frac{1}{\im\sigma}\phi^1.
\end{equation*}
Thus all cohomology groups of the complex $\D(\sigma)$ vanish if $\sigma\ne 0$, \ie, $\spec_{b,\N}^q(\bd)\subset \set{0}$.

It is not hard to verify that
\begin{equation*}
\Pi\bd\,\ev_{\N}^*=\ev_{\N}^*d.
\end{equation*}
Since
\begin{equation*}
\rr^{-\im\sigma}\bd\rr^{\im\sigma}\phi = \bd \phi +\im \sigma\frac{\bd\rr}{\rr}\wedge\phi,
\end{equation*}
the indicial operator of $\bd$ at $\sigma=0$ can be viewed as the operator
\begin{equation*}
\begin{bmatrix}
d & 0\\
0 & -d
\end{bmatrix}:
\begin{matrix}
\Wedge^q\N\\
\oplus\\
\Wedge^{q-1}\N
\end{matrix} \to
\begin{matrix}
\Wedge^q\N\\
\oplus\\
\Wedge^{q-1}\N
\end{matrix}.
\end{equation*}
From this we get the cohomology groups of $\D(0)$ in terms of the de Rham cohomology of $\N$:
\begin{equation*}
H^q_{\D(0)}(\N)=H^q_{\dR}(\N)\oplus H^{q-1}_{\dR}(\N).
\end{equation*}
Thus the groups $H^q_{\D(0)}(\N)$ do not vanish for $q=0$, $1$, $\dim\M-1$, $\dim\M$ but may vanish for other values of $q$.

We now show that every element of the stalk of $\mathfrak H^q_{\bd}(\N)$ over $0$ has a representative with a simple pole at $0$. Suppose that
\begin{equation}\label{Representative}
\phi(\sigma)=\sum_{k=1}^\mu \frac{1}{\sigma^k}\left(\phi^0_k+\frac{\bd \rr}{\rr}\wedge \phi^1_k\right)
\end{equation}
is such that $\D(\sigma)\phi(\sigma)$ is holomorphic. Then
\begin{equation*}
\sum_{k=1}^\mu \frac{1}{\sigma^k}\left(d\phi^0_k - \frac{\bd \rr}{\rr}\wedge d\phi^1_k\right) + \frac{\bd \rr}{\rr}\wedge\left(\sum_{k=1}^{\mu-1} \frac{\im}{\sigma^k}\phi^0_{k+1}\right) = 0,
\end{equation*}
hence $d\phi^0_1=0$, $d\phi^1_\mu=0$ and $\phi^0_k=-\im d\phi^1_{k-1}$, $k=2,\dotsc,\mu$. Let
\begin{equation*}
\psi(\sigma)=-\im \sum_{k=2}^{\mu+1} \frac{1}{\sigma^k} \phi^1_{k-1}.
\end{equation*}
Then
\begin{align*}
\D(\sigma)\psi(\sigma)
&= -\im \sum_{k=2}^{\mu+1} \frac{1}{\sigma^k} d\phi^1_{k-1} + \frac{\bd\rr}{\rr}\wedge\sum_{k=2}^{\mu+1}\frac{1}{\sigma^{k-1}}\phi^1_{k-1}\\
&= \sum_{k=2}^{\mu} \frac{1}{\sigma^k} \phi^0_k + \frac{\bd\rr}{\rr}\wedge\sum_{k=1}^{\mu}\frac{1}{\sigma^k}\phi^1_k
\end{align*}
so
\begin{equation*}
\phi(\sigma)-\D(\sigma)\psi(\sigma)=\frac{1}{\sigma}\phi^0_1.
\end{equation*}
The map that sends the class of the $\D(\sigma)$-closed element \eqref{Representative} to the class of $\phi^0_1$ in $H^q_{\dR}(\N)$ is an isomorphism.
\end{example}

\begin{example}
As we just saw, the boundary spectrum of the $\bd$ complex in degree $0$ is just $\set{0}$. In contrast, $\spec_{b,\N}^0(\bdeebar)$ may be an infinite set. We illustrate this in the context of Example \ref{AnisotropicSphere}. The functions
\begin{equation*}
z^\alpha=(z^1)^{\alpha_1}\dotsm (z^{n+1})^{\alpha_{n+1}},
\end{equation*}
where the $\alpha_j$ are nonnegative integers, are CR functions that satisfy
\begin{equation*}
\T z^\alpha=\im (\sum \tau_j\alpha_j) z^\alpha.
\end{equation*}
This implies that
\begin{equation*}
\Deebar z^\alpha + \im (-\im \sum \tau_j\alpha_j) \beta z^\alpha=0
\end{equation*}
with $\beta$ as in Example \ref{AnisotropicSphere}, so the numbers $\sigma_\alpha=(-\im \sum \tau_j\alpha_j)$ belong to $\spec_{b,\N}^0(\bdeebar)$. 

For the sake of completeness we also show that if $\sigma\in \spec_{b,\N}^0(\bdeebar)$, then $\sigma=\sigma_\alpha$ for some $\alpha$ as above. To see this, suppose that $\zeta:S^{2n+1}\to\C$ is not identically zero and satisfies
\begin{equation*}
\Deebar \zeta+\im \sigma\zeta\beta=0
\end{equation*}
for some $\sigma\ne 0$. Then $\zeta$ is smooth, because the principal symbol of $\Deebar$ on functions is injective. Since $\langle\beta,\T\rangle=-\im$,
\begin{equation*}
T\zeta+ \sigma\zeta=0.
\end{equation*}
Thus $\zeta(\a_t(p))=e^{-\sigma t}\zeta(p)$ for any $p$. Since $|\zeta(\a_t(p))|$ is bounded as a function of $t$ and $\zeta$ is not identically $0$, $\sigma$ must be purely imaginary. Since $\zeta$ is a CR function, it extends uniquely to a holomorphic function $\tilde\zeta$ on $B=\set{z\in \C^{n+1}:\|z\|<1}$, necessarily smooth up to the boundary. Let $\zeta_t=\zeta\circ \a_t$. This is also a smooth CR function, so it has a unique holomorphic extension $\tilde \zeta_t$ to $B$. The integral curve through $z_0=(z^1_0,\dotsc,z^{n+1}_0)$ of the vector field $\T$ is
\begin{equation*}
t\mapsto \a_t(z_0)=(e^{\im \tau_1 t} z^1_0,\dotsc,e^{\im \tau_{n+1} t} z^{n+1}_0)
\end{equation*}
Extending the definition of $\a_t$ to allow arbitrary $z\in \C^{n+1}$ as argument we then have that $\tilde \zeta_t=\tilde \zeta\circ \a_t$. Then
\begin{equation*}
\partial_t\tilde \zeta_t+ \sigma\tilde \zeta_t=0
\end{equation*}
gives
\begin{equation*}
\tilde\zeta(z)=\sum_{\set{\alpha:\pmb \tau\cdot\alpha=\im \sigma}} c_\alpha z^\alpha
\end{equation*}
for $|z|<1$, where $\pmb\tau=(\tau_1,\dotsc,\tau_{n+1})$. Thus $\sigma=-\im\sum\tau_j\alpha_j$ as claimed. Note that $\Im \sigma$ is negative (positive) if the $\tau_j$ are positive (negative) and $\alpha\ne0$.
\end{example}

\section{Underlying CR complexes}\label{sUnderlyingCRcomplexes}

Again let $\a:\R\times\N\to\N$ be the flow of $\T$. Let $\Lie_\T$ denote the Lie derivative with respect to $\T$ on de Rham $q$-forms or vector fields and let $\inner_\T$ denote interior multiplication by $\T$ of de Rham $q$-forms or of elements of $\Wedge^q\smash[t]{\Veebar}^*$.

The proofs of the following two lemmas are elementary.

\begin{lemma}
If $\alpha$ is a smooth section of the annihilator of $\Veebar$ in $\C T^*\N$, then $(\Lie_{\T}\alpha)|_{\Veebar}=0$. Consequently, for each $p\in \N$ and $t\in \R$, $d\a_t:\C T_p\N\to \C T_{\a_t(p)}\N$ maps $\Veebar_p$ onto $\Veebar_{\a_t(p)}$.
\end{lemma}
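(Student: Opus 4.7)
The plan is to prove the two assertions in sequence: the Lie-derivative statement by a direct calculation using Cartan's bracket identity, and the flow-invariance statement by passing to the dual and running a pointwise linear ODE on the annihilator.

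For the Lie-derivative assertion, take any $V\in C^\infty(\N;\Veebar)$ and use the standard formula
$$\langle \Lie_\T\alpha,V\rangle = \T\langle \alpha,V\rangle - \langle \alpha,[\T,V]\rangle.$$
The function $\langle \alpha,V\rangle$ is identically zero since $\alpha$ annihilates $\Veebar$ and $V\in\Veebar$, killing the first term. For the second term, $\T\in\Vee\cap\Veebar\subset\Veebar$, and since $\Veebar$ is involutive we get $[\T,V]\in C^\infty(\N;\Veebar)$, so $\langle\alpha,[\T,V]\rangle=0$. Thus $(\Lie_\T\alpha)|_\Veebar=0$.

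For the claim about $d\a_t$, the dual statement is that pullback by $\a_t$ preserves the annihilator subbundle of $\Veebar$ inside $\C T^*\N$. Cover $\N$ by charts on which this annihilator admits a frame $\alpha_1,\dotsc,\alpha_r$. The first part of the lemma provides smooth coefficients $c_{jk}$ with $\Lie_\T\alpha_j=\sum_k c_{jk}\,\alpha_k$. Setting $\beta_j(t)=\a_t^*\alpha_j$ and using $\tfrac{d}{dt}\a_t^*\alpha_j=\a_t^*\Lie_\T\alpha_j$, one obtains, at each point $q$, the linear ODE
$$\frac{d}{dt}\beta_j(t)(q)=\sum_k c_{jk}(\a_t(q))\,\beta_k(t)(q)$$
in the finite-dimensional space $(\C T_q^*\N)^r$, with initial data $(\alpha_1(q),\dotsc,\alpha_r(q))$ lying in the annihilator of $\Veebar_q$. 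Since the right-hand side preserves this subspace, uniqueness of solutions confines the trajectory to it, so each $\a_t^*\alpha_j$ annihilates $\Veebar$. By duality $d\a_t$ carries $\Veebar_p$ into $\Veebar_{\a_t(p)}$; applying the same argument to $\a_{-t}$ yields the reverse inclusion, hence $d\a_t$ restricts to an isomorphism $\Veebar_p\to\Veebar_{\a_t(p)}$.

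There is no serious obstacle here; the only observation that deserves highlighting is that the annihilator bundle of $\Veebar$ is $\Lie_\T$-stable as a consequence of $\T$'s membership in $\Veebar$ together with involutivity, and pointwise linearity then transports this stability from the infinitesimal generator to the flow.
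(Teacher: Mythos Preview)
The paper does not prove this lemma; it simply remarks that ``the proofs of the following two lemmas are elementary'' and moves on. Your argument supplies exactly the kind of elementary verification the author had in mind: the first part is the standard pairing identity $\langle \Lie_\T\alpha,V\rangle = \T\langle\alpha,V\rangle - \langle\alpha,[\T,V]\rangle$ combined with $\T\in\Veebar$ and involutivity of $\Veebar$, and the second part is the routine passage from infinitesimal invariance to flow invariance via a linear ODE on the annihilator.

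One small point worth tightening: your local frame $\alpha_1,\dotsc,\alpha_r$ and coefficients $c_{jk}$ live on a chart, so the ODE you write is only valid for $t$ with $\a_t(q)$ in that chart. You should remark that the set of $t$ for which $d\a_t(\Veebar_p)\subset\Veebar_{\a_t(p)}$ is closed (by continuity) and open (by your local ODE argument applied near any $t_0$ already in the set, using a chart around $\a_{t_0}(p)$), hence all of $\R$. This is entirely standard and does not affect the correctness of your approach.
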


It follows that there is a well defined smooth bundle homomorphism $\a_t^*: \Wedge^q\smash[t]{\Veebar}^* \to \Wedge^q\smash[t]{\Veebar}^*$ covering $\a_{-t}$. In particular, one can define the Lie derivative $\Lie_{\T}\phi$ with respect to $\T$ of an element in $\phi \in C^\infty(\N;\Wedge^q\smash[t]{\Veebar}^*)$. The usual formula holds:

\begin{lemma}
If $\phi\in C^\infty(\N;\Wedge^q\smash[t]{\Veebar}^*)$, then $\Lie_{\T}\phi=\inner_{\T}\Deebar\phi+\Deebar\inner_{\T}\phi$. Consequently, for each $t$ and $\phi\in C^\infty(\N;\Wedge^q\smash[t]{\Veebar}^*)$, $\Deebar \a_t^*\phi = \a_t^*\Deebar\phi$.
\end{lemma}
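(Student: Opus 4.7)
The plan is to derive the Cartan-type magic formula from the classical de Rham one via the pullback $\iota^*$. Since $\T$ spans $\Vee\cap\Veebar$, it is a section of $\Veebar$, so $\inner_\T$ is defined on $\Wedge^q\smash[t]{\Veebar}^*$, and by the preceding lemma the flow $\a_t$ preserves $\Veebar$, which is what makes $\a_t^*$ and $\Lie_\T$ well defined on sections of $\Wedge^q\smash[t]{\Veebar}^*$. The homomorphism $\iota^*:\Wedge^q\C T^*\N\to\Wedge^q\smash[t]{\Veebar}^*$ is fibrewise surjective, so locally every $\phi\in C^\infty(\N;\Wedge^q\smash[t]{\Veebar}^*)$ can be written $\phi=\iota^*\tilde\phi$ for some ordinary $q$-form $\tilde\phi$ on $\N$.

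The next step is to record three compatibilities of $\iota^*$: (i) $\iota^*\inner_\T=\inner_\T\iota^*$, tautological since $\T\in\Veebar$ and both operations amount to evaluation on $\T$ in the first slot; (ii) $\iota^*\Lie_\T=\Lie_\T\iota^*$, which follows from the fact that the pullback $\a_t^*$ on $\smash[t]{\Veebar}^*$-valued forms is by construction the restriction of the de Rham pullback, and then one takes $d/dt|_0$; and (iii) $\iota^*d=\Deebar\iota^*$, which comes from writing out both sides via the Cartan formula \eqref{CartanFormula} evaluated on tuples of sections of $\Veebar$, where the bracket terms $[V_j,V_k]$ remain in $\Veebar$ by involutivity, so the two expressions coincide term by term. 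Pulling back the classical identity $\Lie_\T\tilde\phi=\inner_\T d\tilde\phi+d\inner_\T\tilde\phi$ on $\N$ by $\iota^*$ and applying (i)--(iii) immediately yields $\Lie_\T\phi=\inner_\T\Deebar\phi+\Deebar\inner_\T\phi$ locally, hence globally by locality.

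For the second assertion, first deduce $[\Lie_\T,\Deebar]=0$ by applying the magic formula twice and using $\Deebar^2=0$:
\begin{equation*}
\Lie_\T\Deebar\phi=\Deebar\inner_\T\Deebar\phi=\Deebar(\Lie_\T\phi-\Deebar\inner_\T\phi)=\Deebar\Lie_\T\phi.
\end{equation*}
Then introduce $G(t)\phi:=\a_{-t}^*\Deebar\a_t^*\phi-\Deebar\phi$, which satisfies $G(0)\phi=0$; differentiating in $t$ and using $(d/dt)\a_t^*=\Lie_\T\a_t^*=\a_t^*\Lie_\T$ together with the commutator identity just established, the two resulting terms cancel and yield $G'(t)\phi=0$. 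Thus $G(t)\phi\equiv0$, which is equivalent to $\Deebar\a_t^*\phi=\a_t^*\Deebar\phi$. There is no genuine obstacle here; the only delicate point is compatibility (iii), and it rests only on the involutivity of $\Veebar$, which is part of the standing hypothesis.
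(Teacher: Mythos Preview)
Your argument is correct. The paper does not actually supply a proof of this lemma: it is one of two lemmas introduced with the sentence ``The proofs of the following two lemmas are elementary,'' so there is nothing to compare against. Your route via the intertwining identities $\iota^*\inner_\T=\inner_\T\iota^*$, $\iota^*\Lie_\T=\Lie_\T\iota^*$, and $\iota^*d=\Deebar\iota^*$ (the last resting on involutivity of $\Veebar$, exactly as you note) is a clean way to import the classical Cartan formula, and the ODE argument for $\Deebar\a_t^*=\a_t^*\Deebar$ is fine once you have $[\Lie_\T,\Deebar]=0$ and $\Lie_\T\a_t^*=\a_t^*\Lie_\T$ (the latter following from $\a_s\circ\a_t=\a_t\circ\a_s$). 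One could alternatively verify the magic formula directly from the defining Cartan formula \eqref{CartanFormula} for $\Deebar$ by plugging in $V_0=\T$, which is presumably the ``elementary'' computation the paper has in mind, but your reduction to the de Rham case is equally valid and arguably more transparent.
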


\medskip
For any defining function $\rr$ of $\N$ in $\M$, $\Kbar_{\rr}=\ker\beta_\rr$ is a CR structure of CR codimension $1$: indeed, $\K_\rr\cap \Kbar_\rr\subset \Span_\C\T$ but since $\langle\beta_\rr,\T\rangle$ vanishes nowhere, we must have $\Kbar\cap \K=0$. Since $\K\oplus \Kbar\oplus\Span_\C\T=\C T\N$, the CR codimension is $1$. Finally, if $V,W\in C^\infty(\N;\Kbar_{\rr})$, then 
\begin{equation*}
\langle\beta_\rr,[V,W]\rangle=V\langle\beta_\rr,W\rangle-W\langle\beta_\rr,V\rangle-2\Deebar\beta(V,W),
\end{equation*}
Since the right hand side vanishes, $[V,W]$ is again a section of $\Kbar_\rr$.

Since $\Veebar=\Kbar_\rr\oplus \Span_\C \T$, the dual of $\Kbar_\rr$ is canonically isomorphic to the kernel of $\inner_\T:\smash[t]{\Veebar}^*\to\C$. We will write $\Kbar^*$ for this kernel. More generally, $\Wedge^q\Kbar_\rr^*$ and the kernel, $\Wedge^q\Kbar^*$, of $\inner_\T:\Wedge^q\smash[t]{\Veebar}^*\to\Wedge^{q-1}\smash[t]{\Veebar}^*$ are canonically isomorphic. The vector bundles $\Wedge^q\Kbar^*$ are independent of the defining function $\rr$. We regard the $\deebarb$-operators of the CR structure as operators
\begin{equation*}
C^\infty(\N;\Wedge^q\Kbar^*)\to C^\infty(\N;\Wedge^{q+1}\Kbar^*).
\end{equation*}
They do depend on $\rr$ but we will not indicate this in the notation. 

To get a formula for $\deebarb$, let
\begin{equation*}
\tilde \beta_\rr=\frac{\im }{\im -a_\rr}\beta_\rr
\end{equation*}
(so that $\langle \im \tilde \beta_\rr,\T\rangle=1$). The projection $\Pi_\rr:\Wedge^q \smash[t]{\Veebar}^*\to\Wedge^q\smash[t]{\Veebar}^*$ on $\Wedge^q\Kbar^*$ according to the decomposition
\begin{equation}\label{DecompositionOfVeebar}
\Wedge^q\smash[t]{\Veebar}^*=\Wedge^q \Kbar^* \oplus \im \tilde \beta_\rr\wedge \Wedge^{q-1}\Kbar^*
\end{equation}
is
\begin{equation}\label{DefinitionOfPi}
\Pi_\rr\phi=\phi-\im\tilde \beta_\rr\wedge \inner_\T\phi.
\end{equation}

\begin{lemma}
With the identification of $\Wedge^q\Kbar_\rr^*$ with $\Wedge^q\Kbar^*$ described above, the $\deebarb$-operators of the CR structure $\Kbar_\rr$ are given by
\begin{equation}\label{DefinitionOfdeebarb}
\deebarb\phi=\Pi_\rr\Deebar\phi\quad \text{ if }\phi\in C^\infty(\N,\Wedge^q\Kbar^*),
\end{equation}
\end{lemma}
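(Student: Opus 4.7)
The plan is to evaluate both sides of \eqref{DefinitionOfdeebarb} on tuples of sections of $\Kbar_\rr$ and read off the result via the Cartan formula. The substantive inputs are the involutivity of $\Kbar_\rr$ (established in the text immediately before the lemma) and the normalization $\langle\im\tilde\beta_\rr,\T\rangle=1$.

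First I would verify that $\Pi_\rr$ defined by \eqref{DefinitionOfPi} is genuinely the projection onto $\Wedge^q\Kbar^*$ along $\im\tilde\beta_\rr\wedge\Wedge^{q-1}\Kbar^*$ asserted in \eqref{DecompositionOfVeebar}. Using the derivation property of $\inner_\T$ and $\langle\im\tilde\beta_\rr,\T\rangle=1$,
\begin{equation*}
\inner_\T\Pi_\rr\phi=\inner_\T\phi-\inner_\T\phi+\im\tilde\beta_\rr\wedge\inner_\T^2\phi=0,
\end{equation*}
so $\Pi_\rr\phi\in\Wedge^q\Kbar^*$; and if $\phi\in\Wedge^q\Kbar^*$ already, then $\inner_\T\phi=0$ and $\Pi_\rr\phi=\phi$. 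This confirms the splitting \eqref{DecompositionOfVeebar} as well as the identification of $\Pi_\rr$.

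Next, fix $\phi\in C^\infty(\N;\Wedge^q\Kbar^*)$ and local sections $V_0,\dotsc,V_q$ of $\Kbar_\rr\subset\Veebar$. Because each $V_j$ lies in $\ker\beta_\rr$, one has $\langle\tilde\beta_\rr,V_j\rangle=0$ for every $j$, so the correction term $\im\tilde\beta_\rr\wedge\inner_\T\Deebar\phi$ in $\Pi_\rr\Deebar\phi$ annihilates $(V_0,\dotsc,V_q)$. Therefore
\begin{equation*}
(q+1)(\Pi_\rr\Deebar\phi)(V_0,\dotsc,V_q)=(q+1)(\Deebar\phi)(V_0,\dotsc,V_q).
\end{equation*}
Apply the Cartan formula \eqref{CartanFormula} adapted to $\Deebar$ on the right-hand side. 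Involutivity of $\Kbar_\rr$ ensures $[V_j,V_k]\in\Kbar_\rr$, so every vector field appearing as an argument of $\phi$ is a section of $\Kbar_\rr$; and since $\phi$, viewed in $\Wedge^q\smash[t]{\Veebar}^*$, vanishes whenever any argument equals $\T$, these evaluations coincide with the values of $\phi$ regarded as a section of $\Wedge^q\Kbar_\rr^*$ under the canonical isomorphism $\Wedge^q\Kbar^*\cong\Wedge^q\Kbar_\rr^*$. The resulting expression is precisely $(q+1)\deebarb\phi(V_0,\dotsc,V_q)$ for the CR structure $\Kbar_\rr$.

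Since $(V_0,\dotsc,V_q)$ was an arbitrary tuple in $\Kbar_\rr$, and both sides of \eqref{DefinitionOfdeebarb} are sections of $\Wedge^{q+1}\Kbar^*$ (\ie{} alternating forms on $\Veebar$ that are annihilated by $\inner_\T$ and hence determined by their restriction to $\Kbar_\rr$), the equality holds globally. There is no real obstacle; the argument is bookkeeping that hinges on the two points above, with no analytic input beyond what is already in place.
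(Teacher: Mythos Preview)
Your argument is correct. Both sides of \eqref{DefinitionOfdeebarb} lie in $\Wedge^{q+1}\Kbar^*$, so they are determined by their values on tuples from $\Kbar_\rr$; and on such tuples the correction term in $\Pi_\rr$ drops out while the Cartan expression for $\Deebar\phi$ collapses, via involutivity of $\Kbar_\rr$, to the Cartan expression defining $\deebarb\phi$. This is airtight.

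Your route differs from the paper's. The paper works in a hypoanalytic chart $(z,t)$ with $\T t=1$, builds an explicit frame $\overline L_\mu=\partial_{\overline z^\mu}-(\beta_\mu/\beta_0)\partial_t$ for $\Kbar_\rr$, notes that these vector fields commute, and then compares the coordinate expressions for $\deebarb\phi$ and for $\Pi_\rr\Deebar\phi$ term by term. Your argument is coordinate-free and isolates exactly the two structural facts that make the lemma work (involutivity of $\Kbar_\rr$ and $\langle\tilde\beta_\rr,V\rangle=0$ for $V\in\Kbar_\rr$), so it is shorter and makes clear that nothing beyond these is needed. The paper's computation, on the other hand, has the virtue of displaying the operators concretely in a frame adapted to the elliptic structure, which is useful in the sequel when $\Deebar$ is written as a matrix in \eqref{DeeAsMatrix}.
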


\begin{proof} Suppose that $(z,t)$ is a hypoanalytic chart for $\Veebar$ on some open set $U$, with $\T t=1$. So $\partial_{\overline z^\mu}$, $\mu=1\dotsc,n$, $\T=\partial_t$ is a frame for $\Veebar$ over $U$ with dual frame $\Deebar\overline z^\mu$, $\Deebar t$. If
\begin{equation*}
\beta_\rr=\sum_{\mu=1}^n\beta_\mu\Deebar \overline z^\mu+\beta_0\Deebar t.
\end{equation*}
then
\begin{equation*}
\overline L_\mu=\partial_{\overline z^\mu}-\frac{\beta_\mu}{\beta_0}\partial_t,\quad\mu=1,\dotsc,n
\end{equation*}
is a frame for $\Kbar_\rr$ over $U$. Let $\overline \eta^\mu$ denote the dual frame (for $\Kbar_\rr^*$). Since the $\overline L_\mu$ commute, $\deebarb\overline \eta^\mu=0$, so if $\phi=\sum'_{|I|=q}\phi_I\,\overline \eta^I$, then (with the notation as in eg. Folland and Kohn~\cite{FK})
\begin{equation*}
\deebarb \phi=\sideset{}{'}\sum_{|J|=q+1}\,\sideset{}{'}\sum_{|I|=q}\sum_\mu \sign {\mu I}{J}\overline L_\mu\phi_I\,\overline \eta^J.
\end{equation*}
On the other hand, the frame of $\smash[t]{\Veebar}^*$ dual to the frame $\overline L_\mu$, $\mu=1,\dotsc,n$, $\T$ of $\Veebar$ is $\Deebar \overline z^\mu$, $\im\tilde \beta_\rr$, and the identification of $\Kbar_\rr^*$ with $\Kbar^*$ maps the $\eta^\mu$ to the $\Deebar \overline z^\mu$. So, as a section of $\Wedge^q\smash[t]{\Veebar}^*$,
\begin{equation*}
\phi=\sideset{}{'}\sum_{|I|=q} \phi_I\, \Deebar\overline z^I
\end{equation*}
and
\begin{equation*}
\Deebar\phi=\sideset{}{'}\sum_{|J|=q+1}\,\sideset{}{'}\sum_{|I|=q} \sign{\mu I}{J} \overline L_\mu\phi_I \,\Deebar\overline z^J+
\im\tilde \beta_\rr\wedge\sideset{}{'}\sum_{|I|=q} \T\phi_I\,\Deebar\overline z^I.
\end{equation*}
Thus $\Pi_\rr\Deebar\phi$ is the section of $\Wedge^{q+1}\Kbar^*$ associated with $\deebarb\phi$ by the identifying map.
\end{proof}

Using \eqref{DefinitionOfPi} in \eqref{DefinitionOfdeebarb} and the fact that $\inner_\T\Deebar\phi=\Lie_\T\phi$ if $\phi\in C^\infty(\N;\Wedge^q\Kbar^*)$ we get
\begin{equation}\label{FormulaForDeeOnK*}
\deebarb\phi=\Deebar\phi-\im \tilde \beta_\rr\wedge\Lie_\T\phi \quad \text{ if }\phi\in C^\infty(\N,\Wedge^q\Kbar^*).
\end{equation}

The $\Deebar$ operators can be expressed in terms of the $\deebarb$ operators. Suppose $\phi\in C^\infty(\N;\Wedge^q\smash[t]{\Veebar}^*)$. Then $\phi=\phi^0+\im \tilde \beta_\rr\wedge \phi^1$ with unique $\phi^0 \in C^\infty(\N;\Wedge^q \Kbar^*)$ and $\phi^1 \in C^\infty(\N;\Wedge^{q-1} \Kbar^*)$, and
\begin{equation*}
\Deebar\phi^0=\deebarb\phi^0+\im\tilde \beta_\rr\wedge\Lie_\T\phi^0,
\end{equation*}
see \eqref{FormulaForDeeOnK*}. Using
\begin{equation*}
\Deebar\tilde \beta_\rr=\frac{\Deebar a_\rr}{\im - a_\rr}\wedge\tilde \beta_\rr
\end{equation*}
and \eqref{FormulaForDeeOnK*} again we get
\begin{equation*}
\Deebar(\im \tilde \beta_\rr\wedge \phi^1)=
\im\tilde \beta_\rr\wedge \big(-\frac{\Deebar a_\rr}{\im - a_\rr}\wedge\phi^1 - \Deebar\phi^1 \big)=
\im\tilde \beta_\rr\wedge \big(-\frac{\deebarb a_\rr}{\im - a_\rr}\wedge\phi^1 - \deebarb\phi^1 \big).
\end{equation*}
This gives
\begin{equation}\label{DeeAsMatrix}
\Deebar=
\begin{bmatrix}
\deebarb & 0\\
\Lie_\T & -\deebarb -\dfrac{\deebarb a_\rr}{\im -a_\rr}
\end{bmatrix}
:
\begin{matrix}
C^\infty(\N;\Wedge^q\Kbar^*)\\ \oplus \\ C^\infty(\N;\Wedge^{q-1}\Kbar^*)
\end{matrix}
\to
\begin{matrix}
C^\infty(\N;\Wedge^{q+1}\Kbar^*)\\ \oplus \\ C^\infty(\N;\Wedge^q\Kbar^*)
\end{matrix}.
\end{equation}

Since $\T$ itself is $\T$-invariant, $\inner_\T\a_t^*=a_t^*\inner_\T$: the subbundle $\Kbar^*$ of $\smash[t]{\Veebar}^*$ is invariant under $\a_t^*$ for each $t$. This need not be true of $\Kbar_\rr$, \ie, the statement that for all $t$, $d\a_t(\Kbar_\rr)\subset \Kbar_\rr$, equivalently,
\begin{equation*}
L\in C^\infty(\M;\Kbar_\rr)\implies [\T,L]\in C^\infty(\M;\Kbar_\rr),
\end{equation*}
may fail to hold. Since $\Deebar\beta_\rr=0$, the formula
\begin{equation*}
0=\T\langle \beta_\rr,L\rangle - L\langle \beta_\rr,\T\rangle - \langle \beta_\rr,[\T,L]\rangle
\end{equation*}
with $L\in C^\infty(\N;\Kbar_\rr)$ gives that $\Kbar_\rr$ is invariant under $d\a_t$ if and only if $La_\rr=0$ for each CR vector field, that is, if and only if $a_\rr$ is a CR function. This proves the equivalence between the first and last statements in the following lemma. The third statement is the most useful.

\begin{lemma}\label{Invariances}
Let $\rr$ be a defining function for $\N$ in $\M$ and let $\deebarb$ denote the operators of the associated CR complex. The following are equivalent:
\begin{enumerate}
\item The function $a_\rr$ is CR;
\item $\Lie_\T\tilde \beta_\rr=0$;
\item $\Lie_\T\deebarb-\deebarb\Lie_\T=0$;
\item $\Kbar_\rr$ is $\T$-invariant.
\end{enumerate}
\end{lemma}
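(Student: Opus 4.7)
The equivalence of (1) and (4) is already established in the paragraph immediately preceding the lemma, via the identity $\T\langle\beta_\rr,L\rangle - L\langle\beta_\rr,\T\rangle - \langle\beta_\rr,[\T,L]\rangle = 0$ applied to $L \in C^\infty(\N;\Kbar_\rr)$. Hence the plan is to close the chain by proving (1) $\Leftrightarrow$ (2) and (2) $\Leftrightarrow$ (3). The first equivalence I would attack by applying Cartan's magic formula to $\tilde\beta_\rr$; the second by differentiating the key identity \eqref{FormulaForDeeOnK*} along $\T$.

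For (1) $\Leftrightarrow$ (2), the starting point is that $\inner_\T\tilde\beta_\rr = -\im$ is a constant, so Cartan gives $\Lie_\T\tilde\beta_\rr = \inner_\T \Deebar\tilde\beta_\rr$. Combine this with the formula $\Deebar\tilde\beta_\rr = \frac{\Deebar a_\rr}{\im - a_\rr}\wedge\tilde\beta_\rr$ already derived in the text, and decompose $\Deebar a_\rr$ according to $\smash[t]{\Veebar}^* = \Kbar^* \oplus \C\cdot\im\tilde\beta_\rr$: applying \eqref{FormulaForDeeOnK*} with $\phi = a_\rr$ gives
\begin{equation*}
\Deebar a_\rr = \deebarb a_\rr + \im(\T a_\rr)\tilde\beta_\rr.
\end{equation*}
Substituting and expanding $\inner_\T$ of the resulting wedge product, the two terms carrying $\T a_\rr\,\tilde\beta_\rr$ cancel and one is left with
\begin{equation*}
\Lie_\T\tilde\beta_\rr = \frac{\im}{\im - a_\rr}\,\deebarb a_\rr.
\end{equation*}
Since $\im - a_\rr$ vanishes nowhere, this 1-form is zero precisely when $\deebarb a_\rr = 0$, i.e., when $a_\rr$ is annihilated by every section of $\Kbar_\rr$; this is exactly (1).

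For (2) $\Leftrightarrow$ (3), I would apply $\Lie_\T$ to both sides of \eqref{FormulaForDeeOnK*}. The subbundle $\Kbar^* = \ker\inner_\T$ is $\T$-invariant because $[\Lie_\T,\inner_\T]=0$, so $\Lie_\T\phi$ is again a section of $\Wedge^q\Kbar^*$ whenever $\phi$ is, and \eqref{FormulaForDeeOnK*} applies to it as well. Using the commutation $\Lie_\T\Deebar = \Deebar\Lie_\T$ coming from the preceding lemma and the Leibniz rule for $\Lie_\T$ on the wedge product, a straightforward calculation yields
\begin{equation*}
[\Lie_\T,\deebarb]\phi = -\im(\Lie_\T\tilde\beta_\rr)\wedge\Lie_\T\phi,\qquad \phi\in C^\infty(\N;\Wedge^q\Kbar^*).
\end{equation*}
This makes (2) $\Rightarrow$ (3) immediate. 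For the converse, test on $0$-forms $\phi$: locally one can find a smooth $\phi$ with $\T\phi = 1$ (in a flow box for the nonvanishing vector field $\T$), and then $(\Lie_\T\tilde\beta_\rr)\,\T\phi = 0$ forces $\Lie_\T\tilde\beta_\rr = 0$ on that neighborhood; a covering argument promotes this to all of $\N$.

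The only real obstacle is the bookkeeping in the calculation for (1) $\Leftrightarrow$ (2): one must track the factor $\im/(\im - a_\rr)$ carefully and keep the $\Kbar^*$ and $\im\tilde\beta_\rr$ components separate when decomposing $\Deebar a_\rr$, so that the cancellation producing the clean formula $\Lie_\T\tilde\beta_\rr = \tfrac{\im}{\im - a_\rr}\deebarb a_\rr$ is transparent. Everything else is formal manipulation using Cartan's formula, Leibniz, and facts already proved in the excerpt.
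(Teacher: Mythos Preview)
Your proof is correct and follows essentially the same route as the paper: you establish (1) $\Leftrightarrow$ (2) and (2) $\Leftrightarrow$ (3) via the same key identities $\deebarb a_\rr = (a_\rr-\im)\im\,\Lie_\T\tilde\beta_\rr$ and $[\Lie_\T,\deebarb]\phi = -\im(\Lie_\T\tilde\beta_\rr)\wedge\Lie_\T\phi$. The only cosmetic difference is that for (1) $\Leftrightarrow$ (2) the paper applies $\Lie_\T$ to the relation $\beta_\rr=(a_\rr-\im)\im\tilde\beta_\rr$ (using $\Lie_\T\beta_\rr=\Deebar a_\rr$) rather than computing $\inner_\T\Deebar\tilde\beta_\rr$ directly, but both one-line calculations land on the same formula.
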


\begin{proof}
From $\beta_\rr=(a_\rr-\im)\im\tilde \beta_\rr$ and $\Lie_\T\beta_\rr=\Deebar a_\rr$ we obtain
\begin{equation*}
\Deebar a_\rr=(\Lie_\T a_\rr)\im \tilde \beta_\rr+(a_\rr-\im)\im \Lie_\T\tilde \beta_\rr,
\end{equation*}
so
\begin{equation*}
\deebarb a_\rr=\Deebar a_\rr-(\Lie_\T a_\rr) \im \tilde \beta_\rr =(a_\rr-\im)\im \Lie_\T\tilde \beta_\rr.
\end{equation*}
Thus $a_\rr$ is CR if and only if $\Lie_\T\tilde \beta_\rr=0$.

Using $\Lie_\T\Deebar=\Deebar\Lie_\T$ and the definition of $\deebarb$ we get
\begin{equation*}
\Lie_\T \deebarb\phi
=\Lie_\T(\Deebar\phi-\im\tilde \beta_\rr\wedge \Lie_\T\phi)
=\deebarb\Lie_\T\phi-\im(\Lie_\T\tilde \beta_\rr)\wedge \Lie_\T\phi
\end{equation*}
for $\phi\in C^\infty(\N;\Wedge^q\Kbar^*)$. Thus $\Lie_\T\deebarb-\deebarb\Lie_\T=0$ if and only if $\Lie_\T\tilde \beta_\rr=0$.
\end{proof}

\begin{lemma}\label{WithInvariantMetric}
Suppose that $\Veebar$ admits a $\T$-invariant metric. Then there is a defining function $\rr$ for $\N$ in $\M$ such that $a_\rr$ is constant. If $\rr$ and $\rr'$ are defining functions such that $a_\rr$ and $a_{\rr'}$ are constant, then $a_\rr = a_{\rr'}$. This constant will be denoted $\a_\av$.
\end{lemma}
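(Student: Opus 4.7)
Since every defining function is of the form $\rr'=\rr e^u$ with $u\in C^\infty(\N,\R)$, and the excerpt has already established $a_{\rr'}=a_\rr+\T u$, the lemma reduces to the following statement. Fix once and for all a defining function $\rr_0$ and set $a_0=a_{\rr_0}$. Existence amounts to producing $u\in C^\infty(\N,\R)$ and a constant $c\in\R$ solving the cohomological equation $\T u=c-a_0$; uniqueness amounts to showing that if $\T u$ is identically a constant, then that constant is $0$.

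\textbf{Uniqueness.} Here I would construct a smooth $\T$-invariant volume form $d\mu$ on $\N$ from the given $\T$-invariant Hermitian metric on $\Veebar$ (take its real part on the real subbundle of $\C T\N$ spanned by the real and imaginary parts of $\Veebar$, and use $\Vee+\Veebar=\C T\N$ to extend to a $\T$-invariant Riemannian metric on $T\N$, whose Riemannian volume is automatically $\T$-invariant because $\T$ is Killing). Since $\Lie_\T d\mu=0$, divergence-free-ness of $\T$ gives $\int_\N \T u\,d\mu=0$ for every $u\in C^\infty(\N,\R)$. If $a_\rr$ and $a_{\rr'}$ are both constants with $\rr'=\rr e^u$, the constant $a_{\rr'}-a_\rr=\T u$ integrates to zero against $d\mu$, so it vanishes.

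\textbf{Existence.} With the $\T$-invariant $d\mu$ in hand, $\T$ acts as a formally skew-adjoint first order operator on $L^2(\N,d\mu)$. Because $\T$ is a Killing field on a compact Riemannian manifold, its closure in the isometry group is a compact abelian Lie group $G\cong T^k$ acting on $\N$, with $\T$ tangent to a dense one-parameter subgroup. Averaging $a_0$ over $G$ with normalized Haar measure gives a smooth $G$-invariant function $\bar a_0$, and standard Fourier decomposition on $T^k$ lets me solve $\T v=a_0-\bar a_0$ smoothly (the obstruction is exactly the projection onto $G$-invariants). Setting $\rr = \rr_0 e^{-v}$ then yields $a_\rr = \bar a_0$. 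To finish, I must upgrade ``$\T$-invariant'' to ``constant'': I would try to show $\Deebar\bar a_0=0$ and then invoke Lemma~\ref{ConstantSolutions}. For this, note that if $V\in\Veebar$ with $[\T,V]\in\Veebar$ (\ie\ on the subclass of $\T$-invariant sections that the hypothesis provides through the metric), then $V$ descends to act on $G$-invariants, so $V\bar a_0=\overline{Va_0}$; combined with the fact that $\bar a_0$ is real and \emph{a priori} $\Deebar a_0$ has no particular reason to involve $\bar a_0$ nontrivially, the averaging commutes with $\Deebar$ on the appropriate slice, forcing $\Deebar\bar a_0=0$.

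\textbf{Main obstacle.} The delicate step is the last one: guaranteeing that the averaged function $\bar a_0$ is genuinely constant, not merely $\T$-invariant. This requires extracting from ``$\Veebar$ has a $\T$-invariant Hermitian metric'' enough compatibility between $\T$-averaging and the elliptic operator $\Deebar$ to conclude $\bar a_0\in\ker\Deebar$. The cleanest route is probably to avoid averaging altogether and instead use the invariant metric to produce a distinguished $\T$-invariant representative $\beta\in\pmb\beta$ directly (for instance by using $h(\cdot,\T)$, suitably normalized, as a candidate and verifying $\Deebar$-closedness from $\Lie_\T h=0$ and involutivity of $\Veebar$); then $\langle\beta,\T\rangle$ is both $\T$-invariant by construction and forced to equal $-\im$ plus a constant, yielding the required $\rr$. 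This is where I expect the real work of the proof to lie.
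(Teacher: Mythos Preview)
Your uniqueness argument is fine (and cleaner than the paper's, which quotes Proposition~\ref{Averages} on orbit averages). The existence argument, however, has two gaps, one of which you do not flag.

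\textbf{The unflagged gap: small divisors.} The claim that ``standard Fourier decomposition on $T^k$ lets me solve $\T v=a_0-\bar a_0$ smoothly'' is not correct without a Diophantine hypothesis on the frequency vector of $\T$ inside the torus $G$. On each nontrivial character $\chi\in\widehat G$ the operator $\T$ acts by multiplication by $i\langle\chi,\omega\rangle$ with $\omega$ the frequency vector; these numbers are nonzero (density of the one-parameter subgroup) but may decay faster than any power of $|\chi|$, so the formal inverse need not preserve smoothness. Vanishing of the $G$-invariant part of the right-hand side is only the $L^2$ obstruction, not the $C^\infty$ one.

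\textbf{The flagged gap, and the missing identity.} You correctly isolate the problem of showing $\bar a_0$ is constant, but your sketch does not use the one fact that makes everything work: since $\Deebar\beta_\rr=0$ and $\langle\beta_\rr,\T\rangle=a_\rr-\im$, Cartan's formula gives
\[
\Deebar a_\rr=\Lie_\T\beta_\rr.
\]
The paper exploits this identity together with $\T$-invariant Hodge theory for the \emph{elliptic} $\Deebar$-complex (not the torus action). With a $\T$-invariant metric on $\Veebar$ one builds a $\T$-invariant density, hence $\T$-invariant $\Deebar^\star$, Laplacian, Green's operator $G$, and harmonic projector $\Pi$. Then for any starting $\rr$,
\[
a_\rr-\Pi a_\rr = G\Deebar^\star\Deebar a_\rr = G\Deebar^\star\Lie_\T\beta_\rr = \Lie_\T\big(G\Deebar^\star\beta_\rr\big)=\T\big(G\Deebar^\star\beta_\rr\big),
\]
using that $\Lie_\T$ commutes with $G$ and $\Deebar^\star$. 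Taking real parts gives a smooth real $u=\Re G\Deebar^\star\beta_\rr$ with $a_\rr-\T u=\Re\Pi a_\rr$; the harmonic projection $\Pi a_\rr$ is constant by Lemma~\ref{ConstantSolutions}. This produces the constant and the solution of the cohomological equation simultaneously, with no small divisors and no appeal to a compact group closure.
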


\begin{proof}
Let $h$ be a metric as stated. Let $\Hor^{0,1}$ be the subbundle of $\Veebar$ orthogonal to $\T$. This is $\T$-invariant, and since the metric is $\T$-invariant, $\Hor^{0,1}$ has a $\T$-invariant metric. This metric gives canonically a metric on $\Hor^{1,0}=\overline {\Hor^{0,1}}$. Using the decomposition $\C T\N=\Hor^{1,0}\oplus\Hor^{0,1}\oplus \Span_\C \T$ we get a $\T$-invariant metric on $\C T\N$ for which the decomposition is orthogonal. This metric is induced by a Riemannian metric $g$. Let $\m_0$ be the corresponding Riemannian density, which is $\T$-invariant because $g$ is. Since $\Deebar$, $h$, and $\m_0$ are $\T$-invariant, so are the formal adjoint $\Deebar^\star$ of $\Deebar$ and the Laplacians of the $\Deebar$-complex, and if $G$ denotes the Green's operators for these Laplacians, then $G$ is also $\T$-invariant, as is the orthogonal projection $\Pi$ on the space of $\Deebar$-harmonic forms. Arbitrarily pick a defining function $\rr$ for $\N$ in $\M$. Then
\begin{equation*}
a_\rr-G\Deebar^\star\Deebar a_\rr=\Pi a_\rr
\end{equation*}
where $\Pi a_\rr$ is a constant function by Lemma~\ref{ConstantSolutions}.
Since $\beta_\rr$ is $\Deebar$-closed, $\Deebar a_\rr=\Lie_\T\beta_\rr$. Thus $G\Deebar^\star\Deebar a_\rr= \T G\Deebar^\star \beta_\rr$, and since $a_\rr$ is real valued and $\T$ is a real vector field,
\begin{equation*}
a_\rr- \T \Re G\Deebar^\star\beta_\rr=\Re\Pi a_\rr.
\end{equation*}
Extend the function $u=\Re G\Deebar^\star\beta_\rr$ to $\M$ as a smooth real-valued function. Then $\rr'=e^{-u}\rr$ has the required property.

Suppose that $\rr$, $\rr'$ are defining functions for $\N$ in $\M$ such that $a_\rr$ and $a_{\rr'}$ are constant. Then these functions are equal by Proposition \ref{Averages}. 
\end{proof}

Note that if for some $\rr$, the subbundle $\Kbar_\rr$ is $\T$-invariant and admits a $\T$ invariant Hermitian metric, then there is a $\T$-invariant metric on $\Veebar$.

\medskip
Suppose now that $\rho:F\to\M$ is a holomorphic vector bundle over $\M$. Using the operators
\begin{equation*}
\Deebar:C^\infty(\N;\Wedge^q\smash[t]{\Veebar}^*\otimes F_\N)\to C^\infty(\N;\Wedge^{q+1}\smash[t]{\Veebar}^*\otimes F_\N),
\end{equation*}
see \eqref{defDeebarE}, define operators
\begin{equation}\label{deebarbE}
\cdots\to C^\infty(\N;\Wedge^q\Kbar^*\otimes F_\N)\xrightarrow{\deebarb} C^\infty(\N;\Wedge^{q+1}\Kbar^*\otimes F_\N)\to\cdots
\end{equation}
by
\begin{equation*}
\deebarb \phi =\Pi_\rr\Deebar\phi,\quad\phi\in C^\infty(\N;\Wedge^q\Kbar^*\otimes F_\N)
\end{equation*}
where $\Pi_\rr$ means $\Pi_\rr\otimes \Id$ with $\Pi_\rr$ defined by \eqref{DefinitionOfPi}. The operators \eqref{deebarbE} form a complex. Define also
\begin{equation*}
\Lie_\T = \inner_\T\Deebar + \Deebar\inner_\T
\end{equation*}
where $\inner_\T$ stands for $\inner_\T\otimes \Id$. Then
\begin{equation*}
\inner_\T \Lie_\T = \Lie_\T\inner_\T,\quad \Lie_\T\Deebar=\Deebar\Lie_\T.
\end{equation*}
The first of these identities implies that the image of $C^\infty(\N;\Wedge^q\Kbar^*\otimes F_\N)$ by $\Lie_\T$ is contained in $C^\infty(\N;\Wedge^q\Kbar^*\otimes F_\N)$.
With these definitions, $\Deebar$ as an operator
\begin{equation*}
\Deebar :
\begin{matrix}
C^\infty(\N;\Wedge^q\Kbar^*\otimes F_\N)\\ \oplus \\ C^\infty(\N;\Wedge^{q-1}\Kbar^*\otimes F_\N)
\end{matrix}
\to
\begin{matrix}
C^\infty(\N;\Wedge^{q+1}\Kbar^*\otimes F_\N)\\ \oplus \\ C^\infty(\N;\Wedge^q\Kbar^*\otimes F_\N)
\end{matrix}.
\end{equation*}
is given by the matrix in \eqref{DeeAsMatrix} with the new meanings for $\deebarb$ and $\Lie_\T$.

Assume that there is a $\T$-invariant Riemannian metric on $\N$,  that $\rr$ has be chosen so that $a_\rr$ is constant, that $\Kbar_\rr$ is orthogonal to $\T$, and that $\T$ has unit length. Then the term involving $\deebarb a_\rr$ in the matrix \eqref{DeeAsMatrix} is absent, and since $\Dee^2=0$,
\begin{equation*}
\Lie_\T\deebar_b=\deebarb\Lie_\T.
\end{equation*}
Write $h_{\smash[t]{\Veebar}^*}$ for the metric induced on the bundles $\Wedge^q\smash[t]{\Veebar}^*$ or $\Wedge^q\Kbar^*$.

If $\eta_\mu$, $\mu=1,\dotsc,k$ is a local frame of $F_\N$ over an open set $U\subset \N$ and $\phi$ is a local section of $\Wedge^q\smash[t]{\Veebar}^*\otimes F_\N$ over $U$, then for some smooth sections $\phi^\mu$ of $\Wedge^q\smash[t]{\Veebar}^*$ and $\omega^\nu_\mu$ of $\smash[t]{\Veebar}^*$ over $U$,
\begin{equation*}
\phi= \sum_\mu \phi^\mu \otimes \eta_\mu,\quad \Deebar \sum_\mu \phi^\mu \otimes \eta_\mu = \sum_\nu (\Deebar\phi^\nu + \sum_\mu \omega^\nu_\mu \wedge \phi^\mu)\otimes \eta_\nu.
\end{equation*}
This gives
\begin{equation*}
\deebarb \sum_\mu \phi^\mu \otimes \eta_\mu = \sum_\nu (\deebarb\phi^\nu + \sum_\mu \Pi_\rr\omega^\nu_\mu \wedge \phi^\mu)\otimes \eta_\nu
\end{equation*}
and
\begin{equation*}
\Lie_\T \sum_\mu \phi^\mu \otimes \eta_\mu = \sum_\nu (\Lie_\T\phi^\nu + \sum_\mu \langle\omega^\nu_\mu,\T\rangle \phi^\mu)\otimes \eta_\nu.
\end{equation*}

Suppose now that $h_F$ is a Hermitian metric on $F$. With this metric and the metric $h_{\smash[t]{\Veebar}^*}$ we get Hermitian metrics $h$ on each of the bundles $\Wedge^q\smash[t]{\Veebar}^*\otimes F_\N$. If $\eta_\mu$ is an orthonormal frame of $F_\N$ and $\phi=\sum\phi^\mu\otimes\eta_\mu$, $\psi=\sum\psi^\mu\otimes\eta_\mu$ are sections of $\Wedge^q\smash[t]{\Veebar}^*\otimes F_\N$, then
\begin{equation*}
h(\phi,\psi) = \sum_\nu h_{\smash[t]{\Veebar}^*}(\phi^\mu,\psi^\mu).
\end{equation*}
Therefore
\begin{align*}
h(\Lie_\T\phi,\psi)&+h(\phi,\Lie_\T\psi)\\
&= \sum_\nu h_{\smash[t]{\Veebar}^*}(\Lie_\T\phi^\nu + \sum_\mu \langle\omega^\nu_\mu,\T\rangle \phi^\mu,\psi^\nu)
+ \sum_\mu h_{\smash[t]{\Veebar}^*}(\phi^\mu, \Lie_\T\psi^\mu + \langle\omega^\mu_\nu,\T\rangle \psi^\nu)\\
&= \sum_\nu \T h_{\smash[t]{\Veebar}^*}(\phi^\nu,\psi^\nu) + \sum_{\mu,\nu}  ( \langle\omega^\nu_\mu,\T\rangle + \overline{\langle\omega^\mu_\nu,\T\rangle}) h_{\smash[t]{\Veebar}^*}(\phi^\mu,\psi^\nu)\\
&= \T h(\phi,\psi) + \sum_{\mu,\nu}  ( \langle\omega^\nu_\mu,\T\rangle + \overline{\langle\omega^\mu_\nu,\T\rangle}) h_{\smash[t]{\Veebar}^*}(\phi^\mu,\psi^\nu).
\end{align*}
Thus $\T h(\phi,\psi) = h(\Lie_\T\phi,\psi) +h(\phi,\Lie_\T\psi)$ if and only if
\begin{equation}\label{TangencyOfTE}
\langle\omega^\nu_\mu,\T\rangle + \overline{\langle\omega^\mu_\nu,\T\rangle}=0\text{ for all }\mu,\nu.
\end{equation}
This condition is \eqref{ExactMetricCondition}; just note that by the definition of $\Deebar$, the forms $(\Phi^*)^{-1}\omega^\nu_\mu$ in \eqref{ExactMetricCondition} are the forms that we are denoting $\omega^\nu_\mu$ here. Thus \eqref{TangencyOfTE} holds if and only if $h_F$ is an exact Hermitian metric, see Definition \eqref{ExactMetric}.

Consequently,

\begin{lemma} The statement
\begin{equation}\label{LieTbis}
\T h(\phi,\psi) = h(\Lie_\T\phi,\psi)+h(\phi,\Lie_\T\psi)\quad \forall \phi,\psi\in C^\infty(\N;\Wedge^q\smash[t]{\Veebar}^*\otimes F_\N)
\end{equation}
holds if and only the Hermitian metric $h_F$ is exact.
\end{lemma}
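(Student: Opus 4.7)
The plan is to invoke directly the local computation carried out in the two displays immediately preceding the statement of the lemma. Fix a point $p_0 \in \N$ and a local orthonormal frame $(\eta_\mu)_{\mu=1}^k$ of $F_\N$ on an open neighborhood $U$ of $p_0$. Writing $\phi = \sum_\mu \phi^\mu \otimes \eta_\mu$ and $\psi = \sum_\mu \psi^\mu \otimes \eta_\mu$ and using the local formula for $\Lie_\T$ on sections of $\Wedge^q \smash[t]{\Veebar}^* \otimes F_\N$, the paper already establishes the pointwise identity
\begin{equation*}
h(\Lie_\T\phi,\psi) + h(\phi,\Lie_\T\psi) = \T h(\phi,\psi) + \sum_{\mu,\nu} \bigl(\langle\omega^\nu_\mu,\T\rangle + \overline{\langle\omega^\mu_\nu,\T\rangle}\bigr)\, h_{\smash[t]{\Veebar}^*}(\phi^\mu,\psi^\nu).
\end{equation*}
Thus \eqref{LieTbis} is equivalent, at each point of $U$, to the vanishing of the second sum for all choices of $\phi^\mu, \psi^\nu$.

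For the \emph{if} direction, suppose $h_F$ is exact. Under the identification pointed out in the paragraph before the lemma, the one-forms $\omega^\nu_\mu$ appearing here are precisely the $(\Phi^*)^{-1}\omega^\nu_\mu$ of \eqref{ExactMetricCondition}; hence exactness forces $\langle\omega^\nu_\mu,\T\rangle + \overline{\langle\omega^\mu_\nu,\T\rangle} = 0$ for all $\mu,\nu$, and the correction sum vanishes identically on $U$. Covering $\N$ by such orthonormal trivializations yields \eqref{LieTbis} globally.

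For the \emph{only if} direction, suppose \eqref{LieTbis} holds. Fix arbitrary indices $\mu_0, \nu_0$ and an arbitrary point $p \in U$. Choose test sections of the form $\phi = f\,\eta_{\mu_0}$ and $\psi = g\,\eta_{\nu_0}$ with $f, g$ smooth sections of $\Wedge^q \smash[t]{\Veebar}^*$ for which $h_{\smash[t]{\Veebar}^*}(f,g)(p)$ is any prescribed complex number. The identity collapses to
\begin{equation*}
\bigl(\langle\omega^{\nu_0}_{\mu_0},\T\rangle + \overline{\langle\omega^{\mu_0}_{\nu_0},\T\rangle}\bigr)(p)\, h_{\smash[t]{\Veebar}^*}(f,g)(p) = 0,
\end{equation*}
so the bracketed factor vanishes at $p$. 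Since $p$, $\mu_0$, $\nu_0$ were arbitrary, \eqref{ExactMetricCondition} holds throughout $U$, and thus $h_F$ is exact.

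The only subtle point is the bookkeeping that matches the $\omega^\nu_\mu$ of the local Chern-type formula with the connection forms entering the exactness condition \eqref{ExactMetricCondition}; but since the paper has already carried out this identification explicitly in the sentence following \eqref{TangencyOfTE}, nothing further is required.
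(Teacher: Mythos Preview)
Your proposal is correct and follows exactly the paper's approach: the paper carries out the local computation and the identification with \eqref{ExactMetricCondition} in the paragraphs preceding the lemma, then states the lemma as a direct consequence (``Consequently,''), and you have simply made explicit the trivial linear-algebra step showing that vanishing of the correction sum for all $\phi,\psi$ is equivalent to vanishing of each coefficient $\langle\omega^\nu_\mu,\T\rangle + \overline{\langle\omega^\mu_\nu,\T\rangle}$.
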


\section{Spectrum}\label{sSpectrum}

Suppose that $\Veebar$ admits an invariant Hermitian metric. Let $\rr$ be a defining function for $\N$ in $\M$ such that $a_\rr$ is constant. By  Lemma \eqref{Invariances} $\Kbar_\rr$ is $\T$-invariant, so the restriction of the metric to this subbundle gives a $\T$-invariant metric; we use the induced metric on the bundles $\Wedge^q\Kbar^*$ in the following. As in the proof of Lemma~\ref{WithInvariantMetric}, there is a $\T$-invariant density $\m_0$ on $\N$. 

Let $\rho:F\to\M$ be a Hermitian holomorphic vector bundle, assume that the Hermitian metric of $F$ is exact, so with the induced metric $h$ on the vector bundles $\Wedge^q\smash[t]{\Veebar}^*\otimes F_\N$, \eqref{LieTbis} holds. We will write $F$ in place of $F_\N$. 

Let $\deebarb^\star$ be the formal adjoint of the $\bdeebar$ operator \eqref{deebarbE} with respect to the inner on the bundles $\Wedge^q\Kbar^*\otimes F$ and the density $\m_0$, and let $\Laplacian_{b,q} = \deebarb\deebarb^\star + \deebarb^\star\deebarb$ be the formal $\deebarb$-Laplacian. Since $-\im\Lie_\T$ is formally selfadjoint and commutes with $\deebarb$, $\Lie_\T$ commutes with $\Laplacian_{b,q}$. Let
\begin{equation*}
\Ha^q_{\deebarb}(\N;F)=\ker\Laplacian_{b,q}=\set{\phi\in L^2(\N;\Wedge^q\Kbar^*\otimes F):\Laplacian_{b,q}\phi=0}
\end{equation*}
and let
\begin{equation*}
\Dom_q(\Lie_\T)=\set{\phi\in \Ha^q_{\deebarb}(\N;F)\text{ and }\Lie_\T\phi\in \Ha^q_{\deebarb}(\N;F)}.
\end{equation*}
The spaces $\Ha^q_{\deebarb}(\N;F)$ may be of infinite dimension, but in any case they are closed subspaces of $L^2(\N;\Wedge^q\Kbar^*\otimes F)$, so they may be regarded as Hilbert spaces on their own right. If $\phi\in \Ha^q_{\deebarb}(\N;F)$, the condition $\Lie_\T\phi\in \Ha^q_{\deebarb}(\N;F)$ is equivalent to the condition
\begin{equation*}
\Lie_\T\phi\in L^2(\N;\Wedge^q\Kbar^*\otimes F).
\end{equation*}
So we have a closed operator
\begin{equation}\label{LieOnB-Harmonic}
-\im \Lie_\T:\Dom_q(\Lie_\T)\subset \Ha^q_{\deebarb}(\N;F)\to \Ha^q_{\deebarb}(\N;F).
\end{equation}
The fact that $\Laplacian_{b,q}-\Lie_\T^2$ is elliptic, symmetric, and commutes with $\Lie_\T$ implies that \eqref{LieOnB-Harmonic} is a selfadjoint Fredholm operator with discrete spectrum (see \cite[Theorem 2.5]{Me9}).

\begin{definition}\label{BCohomologyWithCoeffs}
Let $\spec^q_0(-\im \Lie_\T)$ be the spectrum of the operator \eqref{LieOnB-Harmonic}, and let $\Ha^q_{\deebarb,\tau}(\N;F)$ be the eigenspace of $-\im\Lie_\T$ in $\Ha^q_{\deebarb}(\N;F)$ corresponding to the eigenvalue $\tau$.
\end{definition}

Let $\pmb\tau$ denote the principal symbol of $-\im\T$. Then the principal symbol of $\Lie_\T$ acting on sections of $\Wedge^q\Kbar^*$ is $\pmb\tau\Id$. Because $\Laplacian_{b,q}-\Lie_\T^2$ is elliptic, $\Char(\Laplacian_{b,q})$, the characteristic variety of $\Laplacian_{b,q}$, lies in $\pmb\tau\ne 0$. Let
\begin{equation*}
\Char^\pm(\Laplacian_{b,q})=\set{\nu\in\Char (\Laplacian_{b,q}):\pmb \tau(\nu)\gtrless 0}.
\end{equation*}
By \cite[Theorem 4.1]{Me9}, if $\Laplacian_{b,q}$ is microlocally hypoelliptic on $\Char^\pm(\Laplacian_{b,q})$, then 
\begin{equation*}
\set{\tau\in \spec^q_0(-\im \Lie_\T):\tau\gtrless0}
\end{equation*}
is finite. We should perhaps point out that $\Char(\Laplacian_{b,q})$ is equal to the characteristic variety, $\Char(\Kbar_r)$, of the CR structure. 

As a special case consider the situation where $F$ is the trivial line bundle. Let $\theta_\rr$ be the real $1$-form on $\N$ which vanishes on $\Kbar_\rr$ and satisfies $\langle\theta_\rr,\T\rangle =1$; thus $\theta_\rr$ is smooth, spans $\Char(\Kbar_\rr)$, and has values in $\Char^+(\Kbar_\rr)$. The Levi form of the structure is
\begin{equation*}
\Levi_{\theta_\rr}(v,w)=-\im d\theta_\rr(v,\overline w), \quad v,\ w\in \K_{\rr,p},\ p\in \N.
\end{equation*}
Suppose that $\Levi_{\theta_\rr}$ is nondegenerate, with $k$ positive and $n-k$ negative eigenvalues. It is well known that then $\Laplacian_{b,q}$ is microlocally hypoelliptic at $\nu\in\Char\K_\rr$ for all $q$ except if $q=k$ and $\pmb \tau(\nu)<0$ or if $q=n-k$ and $\pmb \tau(\nu)>0$.

Then the already mentioned Theorem~4.1 of \cite{Me9} gives:

\begin{theorem}[{\cite[Theorem 6.1]{Me9}}]\label{WeakVanishing}
Suppose that $\Veebar$ admits a Hermitian metric and that for some defining function $\rr$ such that $\a_\rr$ is constant, $\Levi_{\theta_\rr}$ is nondegenerate with $k$ positive and $n-k$ negative eigenvalues. Then
\begin{enumerate}
\item $\spec_0^q(-\im \Lie_\T)$ is finite if $q\ne k,\ n-k$;
\item $\spec_0^k(-\im\Lie_\T)$ contains only finitely many positive elements, and \item $\spec_0^{n-k}(-\im\Lie_\T)$ contains only finitely many negative elements.
\end{enumerate}
\end{theorem}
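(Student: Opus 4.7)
The strategy is to reduce the statement directly to \cite[Theorem 4.1]{Me9}, which is the tool cited in the sentence immediately preceding the theorem: if $\Laplacian_{b,q}$ is microlocally hypoelliptic on $\Char^\pm(\Laplacian_{b,q})$, then the corresponding half $\set{\tau\in\spec_0^q(-\im\Lie_\T):\tau\gtrless 0}$ is finite. All the analytic heavy lifting sits in that statement; what remains is to check that its hypothesis is satisfied in each of the three cases and to combine the two half-finiteness statements when possible.

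First, I would record the bookkeeping already spelled out in Section~\ref{sSpectrum}: the characteristic variety $\Char(\Laplacian_{b,q})$ coincides with $\Char(\Kbar_\rr)$, which is the real line bundle in $T^*\N$ spanned by $\theta_\rr$, and the splitting $\Char^\pm$ is determined by the sign of the principal symbol $\pmb\tau$ of $-\im\T$, with $\Char^+$ corresponding to positive multiples of $\theta_\rr$ since $\langle\theta_\rr,\T\rangle=1$. Next, I would invoke the well-known microlocal hypoellipticity result for the Kohn Laplacian recalled just above the theorem: under the Levi nondegeneracy assumption with signature $(k,n-k)$, $\Laplacian_{b,q}$ is microlocally hypoelliptic at every $\nu\in\Char(\Kbar_\rr)$ except when $q=k$ with $\pmb\tau(\nu)<0$ or $q=n-k$ with $\pmb\tau(\nu)>0$. (This is the classical condition $Y(q)$; the coupling to $F$ does not alter it because it is only a principal-symbol condition, and the exactness of $h_F$ enters only to ensure the self-adjointness of $-\im\Lie_\T$ used to define $\spec_0^q$.)

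I would then split into three cases and apply \cite[Theorem 4.1]{Me9}. If $q\ne k$ and $q\ne n-k$, hypoellipticity holds on both $\Char^+$ and $\Char^-$, so applying the theorem on each side shows that both $\set{\tau>0}$ and $\set{\tau<0}$ are finite; since $\spec_0^q(-\im\Lie_\T)$ is discrete (the operator is Fredholm selfadjoint, as noted before Definition~\ref{BCohomologyWithCoeffs}) and $0$ contributes at most a single point, the whole spectrum is finite, giving (1). For $q=k$, hypoellipticity holds on $\Char^+$, so Theorem~4.1 on the positive side gives (2). Symmetrically, for $q=n-k$ it holds on $\Char^-$, yielding (3).

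There is no genuine obstacle left: everything substantive is already packaged in \cite[Theorem 4.1]{Me9} and in the standard microlocal statement for $\Laplacian_{b,q}$. The only subtle point worth flagging in the write-up is matching sign conventions: one must verify that the sign of $\pmb\tau$ used in the definition of $\Char^\pm$ here is the same as the one governing the finiteness conclusion of \cite[Theorem 4.1]{Me9} — which is the case because in both places $\pmb\tau$ is defined as the principal symbol of $-\im\T$.
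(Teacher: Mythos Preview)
Your proposal is correct and matches the paper's approach exactly: the paper does not give a separate proof but simply notes, in the sentence preceding the theorem, that the microlocal hypoellipticity facts just recalled feed into \cite[Theorem~4.1]{Me9} to yield the result. Your write-up makes this implicit argument explicit, with the same case split and the same sign bookkeeping.
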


\section{Indicial cohomology}\label{sIndicialCohomology}

Suppose that there is a $\T$-invariant Hermitian metric $\tilde h$ on $\Veebar$. By Lemma~\ref{WithInvariantMetric} there is a defining function $\rr$ such that $\langle\beta_\rr,\T\rangle$ is constant, equal to $a_\av-\im$. Therefore $\Kbar_\rr$ is $\T$-invariant. Let $h$ be the metric on $\Veebar$ which coincides with $\tilde h$ on $\Kbar_\rr$, makes the decomposition $\Veebar=\Kbar_\rr\oplus \Span_\C\T$ orthogonal, and for which $\T$ has unit length. The metric $h$ is $\T$-invariant. We fix $\rr$ and such a metric, and let $\m_0$ be the Riemannian measure associated with $h$. The decomposition \eqref{DecompositionOfVeebar} of $\Wedge^q\smash[t]{\Veebar}^*$ is an orthogonal decomposition.

Recall that $\Dbar(\sigma)\phi=\Deebar \phi+\im\sigma\beta_\rr\wedge \phi$. Since $a_\rr=a_\av$ is constant (in particular CR),
\begin{equation*}
\Dbar(\sigma)(\phi^0 + \im \tilde \beta_\rr\wedge \phi^1) =
\deebarb\phi_0+\im\tilde \beta_\rr\wedge \big[\big(\Lie_\T+(1+\im a_\av)\sigma\big)\phi^0 -\deebarb\phi^1\big]
\end{equation*}
if $\phi^0\in C^\infty(\N;\Wedge^q\Kbar_\rr^*)$ and $\phi^1\in C^\infty(\N;\Wedge^{q-1}\Kbar_\rr^*)$. So $\Dbar(\sigma)$ can be regarded as the operator
\begin{equation}\label{CalDasMarix}
\Dbar(\sigma)=\begin{bmatrix}
\deebarb & 0\\
\Lie_\T+(1+\im a_\av)\sigma & -\deebarb
\end{bmatrix}:
\begin{matrix}
C^\infty(\N;\Wedge^q\Kbar_\rr^*) \\ \oplus\\ C^\infty(\N;\Wedge^{q-1}\Kbar_\rr^*)
\end{matrix}
\to
\begin{matrix}
C^\infty(\N;\Wedge^{q+1}\Kbar_\rr^*) \\ \oplus\\ C^\infty(\N;\Wedge^{q}\Kbar_\rr^*).
\end{matrix}
\end{equation}
Since the subbundles $\Wedge^q\Kbar_\rr$ and $\tilde \beta \wedge \Wedge^{q-1} \Kbar_\rr$ are orthogonal with respect to the metric induced by $h$ on $\Wedge^q\Veebar$, the formal adjoint of $\Dbar(\sigma)$ with respect to this metric and the density $\m_0$ is
\begin{equation*}
\Dbar(\sigma)^\star=
\begin{bmatrix}
\deebarb^\star & -\Lie_\T+(1-\im a_\av)\overline \sigma \\
0& -\deebarb^\star
\end{bmatrix}:
\begin{matrix}
C^\infty(\N;\Wedge^{q+1}\Kbar_\rr^*) \\ \oplus\\ C^\infty(\N;\Wedge^{q}\Kbar_\rr^*)
\end{matrix}
\to
\begin{matrix}
C^\infty(\N;\Wedge^q\Kbar_\rr^*) \\ \oplus\\ C^\infty(\N;\Wedge^{q-1}\Kbar_\rr^*)
\end{matrix}
\end{equation*}
where $\deebarb^\star$ is the formal adjoint of $\deebarb$. So the Laplacian, $\Laplacian_{\Dbar(\sigma),q}$, of the $\Dbar(\sigma)$-complex is the diagonal operator with diagonal entries $P_q(\sigma)$, $P_{q-1}(\sigma)$ where
\begin{equation*}
P_q(\sigma)=\Laplacian_{b,q}+(\Lie_\T+(1+\im a_\av)\sigma)(-\Lie_\T+(1-\im a_\av)\overline \sigma)
\end{equation*}
acting on $C^\infty(\N;\Wedge^q\Kbar_\rr^*)$ and $P_{q-1}(\sigma)$ is the ``same'' operator, acting on sections of $\Wedge^{q-1}\Kbar_\rr^*$; recall that $\Lie_\T$ commutes with $\deebarb$ and since $\Lie_\T^\star=-\Lie_\T$, also with $\deebarb^\star$, and that $a_\av$ is constant. Note that $P_q(\sigma)$ is an elliptic operator.

Suppose that $\phi\in C^\infty(\N;\Wedge^q\Kbar_\rr^*)$ is a nonzero element of $\ker P_q(\sigma)$; the complex number $\sigma$ is fixed. Since $P_q(\sigma)$ is elliptic, $\ker P_q(\sigma)$ is a finite dimensional space, invariant under $-\im \Lie_\T$ since the latter operator commutes with $P_q(\sigma)$. As an operator on $\ker P_q(\sigma)$, $-\im \Lie_\T$ is selfadjoint, so there is a decomposition of $\ker P_q(\sigma)$ into eigenspaces of $-\im\Lie_\T$. Thus
\begin{equation*}
\phi=\sum_{j=1}^N \phi_j, \quad -\im \Lie_\T\phi_j=\tau_j\phi_j
\end{equation*}
where the $\tau_j$ are distinct real numbers and $\phi_j\in \ker P_q(\sigma)$, $\phi_j\ne 0$. In particular,
\begin{equation*}
\Laplacian_{b,q}\phi_j + (\Lie_\T+(1+\im a_\av)\sigma)(-\Lie_\T+(1-\im a_\av)\overline \sigma)\phi_j = 0,
\end{equation*}
for each $j$, that is,
\begin{equation*}
\Laplacian_{b,q}\phi_j+ |\im\tau_j+(1+\im a_\av)\sigma|^2\phi_j = 0.
\end{equation*}
Since $\Laplacian_{b,q}$ is a nonnegative operator and $\phi_j\ne 0$, $\im\tau_j+(1+\im a_\av)\sigma=0$ and $\phi_j\in \ker\Laplacian_{b,q}$. Since $\sigma$ is fixed, all $\tau_j$ are equal, which means that $N=1$. Conversely, if $\phi\in C^\infty(\N;\Wedge^q\Kbar_\rr^*)$ belongs to $\ker \Laplacian_{b,q}$ and $-\im\Lie_\T\phi=\tau \phi$, then $P_q(\sigma)\phi=0$ with $\sigma$ such that $\tau=(\im-a_\av)\sigma$.

Let $\Ha^q_{\Dbar(\sigma)}(\N)$ be the kernel of $\Laplacian_{\Dbar(\sigma),q}$.

\begin{theorem}\label{TheCohomology}
Suppose that $\Veebar$ admits a $\T$-invariant metric and let $\rr$ be a defining function for $\N$ in $\M$ such that $\langle \beta_\rr,\T\rangle=a_\av-\im$ is constant. Then
\begin{equation*}
\spec_{b,\N}^q(\bdeebar) = (\im-a_\av)^{-1}\spec_0^q(-\im \Lie_\T)\cup (\im-a_\av)^{-1}\spec_0^{q-1}(-\im \Lie_\T),
\end{equation*}
and if $\sigma\in \spec_{b,\N}^q(\bdeebar)$, then, with the notation in Definition \ref{BCohomologyWithCoeffs}
\begin{equation*}
\Ha^q_{\Dbar(\sigma)}(\N)=\Ha^q_{\deebarb,\tau(\sigma)}(\N)\oplus \Ha^{q-1}_{\deebarb,\tau(\sigma)}(\N)
\end{equation*}
with $\tau(\sigma)=(\im-a_\av)\sigma$.
\end{theorem}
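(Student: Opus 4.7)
The plan is to use Hodge theory to reduce to the kernel of the Laplacian $\Laplacian_{\Dbar(\sigma),q}$ and then to exploit the block diagonal structure furnished by the discussion immediately preceding the statement. First I would invoke ellipticity of the indicial complex $\Dbar(\sigma)$ on the compact manifold $\N$, which yields $H^q_{\Dbar(\sigma)}(\N)\cong\Ha^q_{\Dbar(\sigma)}(\N)=\ker\Laplacian_{\Dbar(\sigma),q}$. The matrix form \eqref{CalDasMarix} together with the orthogonality of the decomposition \eqref{DecompositionOfVeebar} renders the Laplacian diagonal with entries $P_q(\sigma)$ and $P_{q-1}(\sigma)$, giving at once
\[
\Ha^q_{\Dbar(\sigma)}(\N)=\ker P_q(\sigma)\oplus\ker P_{q-1}(\sigma).
\]

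The core of the argument is the identification $\ker P_q(\sigma)=\Ha^q_{\deebarb,\tau(\sigma)}(\N)$. One inclusion is essentially done in the text: given $\phi\in\ker P_q(\sigma)$, I would decompose $\phi=\sum_j\phi_j$ into eigencomponents of the self-adjoint operator $-\im\Lie_\T$ on the finite-dimensional space $\ker P_q(\sigma)$ with real eigenvalues $\tau_j$; pairing $P_q(\sigma)\phi_j=0$ with $\phi_j$ and using the nonnegativity of $\Laplacian_{b,q}$ forces both $\Laplacian_{b,q}\phi_j=0$ and $\im\tau_j+(1+\im a_\av)\sigma=0$, the latter giving $\tau_j=(\im-a_\av)\sigma=\tau(\sigma)$ independently of $j$, so the sum collapses to a single term and $\phi\in\Ha^q_{\deebarb,\tau(\sigma)}(\N)$. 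For the reverse inclusion I would observe that $P_q(\sigma)=\Laplacian_{b,q}+BB^\star$ with $B=\Lie_\T+(1+\im a_\av)\sigma$ and $B^\star=-\Lie_\T+(1-\im a_\av)\overline\sigma$; then, for $\phi\in\Ha^q_{\deebarb,\tau(\sigma)}(\N)$, the realness of $\tau(\sigma)$ and $\sigma=\tau(\sigma)/(\im-a_\av)$ yield by a direct rationalization the identity $(1-\im a_\av)\overline\sigma=\im\tau(\sigma)$, whence $B^\star\phi=(-\im\tau(\sigma)+\im\tau(\sigma))\phi=0$ and $P_q(\sigma)\phi=\Laplacian_{b,q}\phi=0$.

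Plugging this identification into the block decomposition would then prove the cohomology formula, and the spectrum statement falls out immediately: $\sigma\in\spec_{b,\N}^q(\bdeebar)$ iff $\Ha^q_{\Dbar(\sigma)}(\N)\ne 0$ iff $\tau(\sigma)\in\spec_0^q(-\im\Lie_\T)\cup\spec_0^{q-1}(-\im\Lie_\T)$, which translates to $(\im-a_\av)^{-1}\spec_0^q(-\im\Lie_\T)\cup(\im-a_\av)^{-1}\spec_0^{q-1}(-\im\Lie_\T)$. The only real hurdle will be the algebraic verification $(1-\im a_\av)\overline\sigma=\im\tau(\sigma)$ needed for the reverse inclusion, which rests on the identity $(\im-a_\av)(-\im-a_\av)=1+a_\av^2$ together with the real-valuedness of $\tau(\sigma)$; once this is in hand, everything is a direct consequence of Hodge theory applied to a block-diagonal elliptic operator.
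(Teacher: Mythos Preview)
Your proposal is correct and matches the paper's argument essentially line for line: the paper carries out exactly this Hodge-theoretic reduction in the paragraphs immediately preceding the theorem (the theorem is stated as a summary of that discussion, with no separate proof), computing the block-diagonal Laplacian, decomposing $\ker P_q(\sigma)$ into $-\im\Lie_\T$-eigenspaces, and using nonnegativity of $\Laplacian_{b,q}$ to force $\im\tau_j+(1+\im a_\av)\sigma=0$ and $\Laplacian_{b,q}\phi_j=0$. Your formulation of the converse via $P_q(\sigma)=\Laplacian_{b,q}+BB^\star$ and $B^\star\phi=0$ is a slightly tidier repackaging of the paper's direct computation $|\im\tau+(1+\im a_\av)\sigma|^2=0$, but it is the same computation; the algebraic identity $(1-\im a_\av)\overline\sigma=\im\tau(\sigma)$ you flag is indeed the only thing to check, and your justification via $\im(-\im-a_\av)=1-\im a_\av$ is correct.
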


If the CR structure $\overline \K_\rr$ is nondegenerate, Proposition~\ref{WeakVanishing} gives more specific information on $\spec_{b,\N}^q(\bdeebar)$. In particular,
\begin{proposition}
With the hypotheses of Theorem~\ref{TheCohomology}, suppose that $\Levi_{\theta_\rr}$ is nondegenerate with $k$ positive and $n-k$ negative eigenvalues. If $k>0$, then $\spec_{b,\N}^0 \subset \set{\sigma\in \C:\Im\sigma\leq 0}$, and if $n-k>0$, then $\spec_{b,\N}^0(\bdeebar) \subset \set{\sigma\in \C:\Im\sigma\geq 0}$.
\end{proposition}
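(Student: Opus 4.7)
The plan is to read off $\spec_{b,\N}^0(\bdeebar)$ from Theorem~\ref{TheCohomology} and convert the half-plane inclusion into a sign condition on $\spec_0^0(-\im\Lie_\T)$. For $q=0$ the summand corresponding to $q-1=-1$ is absent, so
\[
\spec_{b,\N}^0(\bdeebar) = (\im-a_\av)^{-1}\spec_0^0(-\im\Lie_\T).
\]
For real $\tau$ one computes $(\im-a_\av)^{-1}\tau = -\tau(a_\av+\im)/(1+a_\av^2)$, so $\Im\sigma = -\tau/(1+a_\av^2)$ and hence $\Im\sigma\le 0\Leftrightarrow\tau\ge 0$ while $\Im\sigma\ge 0\Leftrightarrow\tau\le 0$. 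The proposition thus reduces to the two complementary statements: if $k>0$ then every $\tau\in\spec_0^0(-\im\Lie_\T)$ is $\ge 0$, and if $n-k>0$ then every such $\tau$ is $\le 0$.

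These two are dual under complex conjugation. An element $\zeta\in\Ha^0_{\deebarb,\tau}(\N)$ is a CR function for $\Kbar_\rr$ with $\T\zeta=\im\tau\zeta$; its conjugate $\overline\zeta$ is CR for the conjugate structure $\K_\rr=\overline{\Kbar_\rr}$, whose Levi form has signature $(n-k,k)$, and satisfies $\T\overline\zeta=-\im\tau\overline\zeta$. Under this involution the hypothesis $k>0$ for the original structure becomes the hypothesis $n-k>0$ for the conjugate, and $\tau$ is replaced by $-\tau$, so it suffices to handle only the case $k>0$.

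For this I would combine a CR Bochner identity with the microlocal machinery already invoked for Theorem~\ref{WeakVanishing}. For $\bar L\in\Kbar_\rr$ and $L=\overline{\bar L}\in\K_\rr$, Cartan's formula gives $\langle\theta_\rr,[L,\bar L]\rangle=-\im\,\Levi_{\theta_\rr}(L,\bar L)$; applying $[L,\bar L]$ to $\zeta$ and using $\bar L\zeta=0$ and $\T\zeta=\im\tau\zeta$ produces
\[
\bar L L\zeta = -A\zeta - \tau\,\Levi_{\theta_\rr}(L,\bar L)\,\zeta,
\]
where $A\in\K_\rr$ is the $\K_\rr$-component of $[L,\bar L]$. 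Pairing with $\overline\zeta$ in $L^2(\N,\m_0)$, integrating by parts, and summing over a local orthonormal frame $L_1,\dots,L_k$ of $\K_\rr$ diagonalizing $\Levi_{\theta_\rr}$ along its positive eigenvectors yields an identity of the shape
\[
\sum_{j=1}^k\|L_j\zeta\|^2 + (\text{lower-order}) = -\tau\int_\N\sum_{j=1}^k\Levi_{\theta_\rr}(L_j,\bar L_j)\,|\zeta|^2\,d\m_0,
\]
with the right-hand integrand strictly positive on an open subset of $\N$ because $k>0$.

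The main obstacle is that the ``lower-order'' terms---coming from $A\in\K_\rr$ and from the non-skew-symmetry of $\bar L$ with respect to $\m_0$---only yield boundedness of the negative part of $\spec_0^0(-\im\Lie_\T)$, which is what Theorem~\ref{WeakVanishing}(3) already records, not the sharp sign. To close the remaining gap I would run the Bochner argument microlocally on $\Char^-(\Laplacian_{b,0})$, where $\Laplacian_{b,0}$ is hypoelliptic whenever $k>0$ (the Levi exception preceding Theorem~\ref{WeakVanishing} is $q=k$ at $\pmb\tau<0$, which does not occur at $q=0$). The resulting subelliptic estimate upgrades ``finitely many negative eigenvalues'' to ``no negative eigenvalue'' in degree zero via the positivity displayed above; this is a degree-zero sharpening of \cite[Theorem~6.1]{Me9} and is where the technical work lies.
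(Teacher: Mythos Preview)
Your first two paragraphs are correct and match exactly what the paper does implicitly: using Theorem~\ref{TheCohomology} with $q=0$ to rewrite $\spec_{b,\N}^0(\bdeebar)=(\im-a_{\av})^{-1}\spec_0^0(-\im\Lie_\T)$, then computing $\Im\sigma=-\tau/(1+a_{\av}^2)$ to convert the half-plane conditions into sign conditions on $\tau$. The conjugation symmetry you note is also fine. This is the paper's entire argument: the Proposition is stated with no proof, prefaced only by ``In particular,'' and is meant to be read off from Theorem~\ref{WeakVanishing} together with Theorem~\ref{TheCohomology}.

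Where you diverge from the paper is in the last two paragraphs. The paper does \emph{not} introduce a Bochner--Kodaira identity or any new microlocal argument; it simply invokes Theorem~\ref{WeakVanishing}. But you have put your finger on a real issue: Theorem~\ref{WeakVanishing}, as stated, only yields that the negative part of $\spec_0^0(-\im\Lie_\T)$ is \emph{finite} when $k>0$ (via microlocal hypoellipticity on $\Char^-(\Laplacian_{b,0})$ and \cite[Theorem~4.1]{Me9}), not that it is empty. Your Bochner computation likewise produces only a lower bound, as you candidly admit. The closing sentence---that a subelliptic estimate ``upgrades'' finiteness to nonexistence---is an assertion, not an argument: subellipticity controls regularity and gives finite-dimensionality of harmonic spaces, but it does not by itself rule out a nontrivial eigenspace at a particular negative $\tau$. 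So the step you label ``where the technical work lies'' is genuinely missing, and what you have written does not constitute a proof of the sharp inclusion.

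In short: your reduction agrees with the paper; your attempted completion is along different lines than anything in the paper; and the gap you try to close (sharpening ``finite'' to ``empty'') is not closed either by your sketch or by what is actually written in the paper.
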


\begin{remark}
The $b$-spectrum of the Laplacian of the $\bdeebar$-complex in any degree can be described explicitly in terms of the joint spectra $\spec(-\im\Lie_\T,\Laplacian_{b,q})$. We briefly indicate how. With the metric $h$ and defining function $\rr$ as in the first paragraph of this section, suppose that $h$ is extended to a metric on $\bT^{0,1}\M$. This gives a Riemannian $b$-metric on $\M$ that in turn gives a $b$-density $\m$ on $\M$. With these we get formal adjoints $\bdeebar^\star$ whose indicial families $\Dbar^\star(\sigma)$ are related to those of $\bdeebar$ by
\begin{equation*}
\Dbar^\star(\sigma) = \widehat{\bdeebar^\star}(\sigma)= [\widehat{\bdeebar}(\overline \sigma)]^\star = \Dbar(\overline \sigma)^\star.
\end{equation*}
By \eqref{CalDasMarix},
\begin{equation*}
\Dbar^\star(\sigma)=
\begin{bmatrix}
\deebarb^\star & -\Lie_\T+(1-\im a_\av) \sigma \\
0& -\deebarb^\star
\end{bmatrix}.
\end{equation*}
Using this one obtains that the indicial family of the Laplacian $\Laplacian_q$ of the $\bdeebar$-complex in degree $q$ is a diagonal operator with diagonal entries $P'_q(\sigma)$, $P'_{q-1}(\sigma)$ with
\begin{equation*}
P'_q(\sigma)=\Laplacian_{b,q}+(\Lie_\T+(1+\im a_\av)\sigma)(-\Lie_\T+(1-\im a_\av)\sigma)
\end{equation*}
and the analogous operator in degree $q-1$. The set $\spec_b(\Laplacian_q)$ is the set of values of $\sigma$ for which either $P'_q(\sigma)$ or $P'_{q-1}(\sigma)$ is not injective. These points can written in terms of the points $\spec(-\im\Lie_\T,\Laplacian_{b})$ as asserted. In particular one gets
\begin{equation*}
\spec_b(\Laplacian_q)\subset \set{\sigma: |\Re \sigma|\leq |a_\av||\Im\sigma|}
\end{equation*}
with $\spec_{b,\N}^q(\bdeebar)$ being a subset of the boundary of the set on the right.
\end{remark}

We now discuss the indicial cohomology sheaf of $\bdeebar$, see Definition \ref{CohomologySheafs}. We will show:

\begin{proposition}\label{bdeebarCohomologySheaf}
Let $\sigma_0\in \spec_{b,\N}^q(\bdeebar)$. Every element of the stalk of $\mathfrak H^q_{\bdeebar}(\N)$ at $\sigma_0$ has a representative of the form
\begin{equation*}
\frac{1}{\sigma-\sigma_0}
\begin{bmatrix}
\phi^0 \\ 0
\end{bmatrix}
\end{equation*}
where $\phi^0\in \Ha^q_{\deebarb,\tau_0}(\N)$, $\tau_0= (\im-a_\av)\sigma_0$.
\end{proposition}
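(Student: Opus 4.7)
The strategy is induction on the pole order $\mu$, using Hodge theory for the elliptic operator $\Laplacian_{\Dbar(\sigma_0),q}$, combined with the identification
\[
\Ha^q_{\Dbar(\sigma_0)}(\N) = \Ha^q_{\deebarb,\tau_0}(\N)\oplus \Ha^{q-1}_{\deebarb,\tau_0}(\N)
\]
from Theorem~\ref{TheCohomology}, plus one explicit cancellation formula: for $h\in\Ha^{q-1}_{\deebarb,\tau_0}(\N)$ and any integer $k\ge 1$, the identity $(\Lie_\T+(1+\im a_\av)\sigma_0)h=0$ gives, by direct calculation with \eqref{CalDasMarix},
\[
\Dbar(\sigma)\Bigl[\frac{h}{(1+\im a_\av)(\sigma-\sigma_0)^{k+1}}\Bigr] = \frac{\im\tilde\beta_\rr\wedge h}{(\sigma-\sigma_0)^k},
\]
a single pure-pole term with no lower-order contributions.

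Let $\phi(\sigma)=\sum_{k=1}^\mu (\sigma-\sigma_0)^{-k}\phi_k$ represent a class, with $\phi_k = \phi_k^0+\im\tilde\beta_\rr\wedge\phi_k^1$ split according to \eqref{DecompositionOfVeebar}. Writing $\Dbar(\sigma) = \Dbar(\sigma_0)+(\sigma-\sigma_0)\Lambda$ where $\Lambda\phi = \im\tilde\beta_\rr\wedge(1+\im a_\av)\phi^0$, the $\Dbar$-closedness of $\phi$ becomes $\Dbar(\sigma_0)\phi_\mu=0$ together with $\Dbar(\sigma_0)\phi_k+\Lambda\phi_{k+1}=0$ for $1\le k<\mu$. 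I would Hodge-decompose $\phi_\mu = h_\mu+\Dbar(\sigma_0)\xi_\mu$ and subtract the exact form $\Dbar(\sigma)\bigl[\xi_\mu/(\sigma-\sigma_0)^\mu\bigr]$ from $\phi$, producing a cohomologous representative with harmonic leading coefficient $h_\mu = h_\mu^0+\im\tilde\beta_\rr\wedge h_\mu^1$, where $h_\mu^0\in\Ha^q_{\deebarb,\tau_0}$ and $h_\mu^1\in\Ha^{q-1}_{\deebarb,\tau_0}$.

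The technical heart of the proof is the claim that $h_\mu^0=0$ whenever $\mu\ge 2$. Indeed, the order-$(\mu-1)$ closedness condition on the modified representative forces $\Lambda h_\mu = \im\tilde\beta_\rr\wedge(1+\im a_\av)h_\mu^0$ to be $\Dbar(\sigma_0)$-exact, while a short computation with the diagonal form of $\Laplacian_{\Dbar(\sigma_0),q+1}$ — whose relevant entry is $P_q(\sigma_0)$ — shows that $\Lambda h_\mu$ is simultaneously $\Dbar(\sigma_0)$-harmonic, since $h_\mu^0$ lies in $\ker\Laplacian_{b,q}\cap\ker(\Lie_\T+(1+\im a_\av)\sigma_0)$. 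Orthogonality of harmonic and exact forms in the Hodge decomposition forces $\Lambda h_\mu=0$, hence $h_\mu^0=0$. Then $h_\mu = \im\tilde\beta_\rr\wedge h_\mu^1$ matches exactly the leading term of the exact form produced by the cancellation formula with $k=\mu$ and $h=h_\mu^1$; subtracting it eliminates the $\mu$-th order altogether and drops the pole to at most $\mu-1$, closing the induction step.

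Iterating brings $\mu$ down to $1$, and one final Hodge decomposition followed by the cancellation formula with $k=1$ applied to $h_1^1$ leaves the desired representative $(\sigma-\sigma_0)^{-1}h_1^0$ with $h_1^0\in\Ha^q_{\deebarb,\tau_0}(\N)$. The step I expect to require the most care is the harmonicity check for $\Lambda h_\mu$: it hinges on the precise relation $\tau_0=(\im-a_\av)\sigma_0$ placing $(1+\im a_\av)h_\mu^0$ in $\ker P_q(\sigma_0)$. Everything else is routine Hodge-theoretic bookkeeping, once one verifies that the exact forms subtracted contribute only at the orders they are designed to affect.
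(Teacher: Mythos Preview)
Your proof is correct and takes a genuinely different route from the paper's. The paper does not use Hodge theory for $\Laplacian_{\Dbar(\sigma_0),q}$ directly; instead it performs a joint spectral decomposition of each coefficient $\phi^i_k$ with respect to the commuting pair $(-\im\Lie_\T,\,P_{q'}=\Laplacian_{b,q'}-\Lie_\T^2)$, and for each joint eigenvalue $(\tau,\lambda)$ with $\tau\ne\tau_0$ it writes down an explicit meromorphic preimage. This creates a convergence problem (the denominators $\im\tau+\alpha\sigma_0$ may accumulate at zero), which the paper handles by splitting the spectrum into a set $U$ near the ``diagonal'' $\lambda=\tau^2$ and its complement $U^c$, building an ad hoc Green operator $G^{q'}_{U^c}$ on the complement, and showing $\Pi_{U^c}\phi$ is exact. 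Only after this reduction does the paper arrive at coefficients lying in the single eigenspace $\mathcal E^{q-i}_{\tau_0,\tau_0^2}$, where the same cancellation formula you use finishes the job.

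Your approach bypasses the convergence issue entirely: the Hodge decomposition for the elliptic complex $\Dbar(\sigma_0)$ is a single closed operation on smooth sections, and your key observation that $\Lambda h_\mu$ is simultaneously $\Dbar(\sigma_0)$-exact and $\Laplacian_{\Dbar(\sigma_0),q+1}$-harmonic (hence zero) is the clean replacement for the paper's spectral bookkeeping. The paper's method, on the other hand, makes the eigenspace structure more visible and yields the explicit preimages at every stage, which could be useful if one wanted finer information about the representatives. Your harmonicity check for $\Lambda h_\mu$ is exactly right: since $h_\mu^0\in\ker\Laplacian_{b,q}$ and $\im\tau_0+(1+\im a_\av)\sigma_0=0$, both factors in $P_q(\sigma_0)$ kill it.
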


\begin{proof}
Let
\begin{equation}\label{RepresentativeA}
\phi(\sigma)=\sum_{k=1}^\mu \frac{1}{(\sigma-\sigma_0)^k}
\begin{bmatrix}
\phi^0_k \\ \phi^1_k
\end{bmatrix}
\end{equation}
represent an element in the stalk at $\sigma_0$ of the sheaf of germs of $C^\infty(\N;\Wedge^q\smash[t]{\Veebar}^*\otimes F)$-valued meromorphic functions on $\C$ modulo the subsheaf of holomorphic elements. Letting $\alpha=1+\im a_\av$ we have
\begin{equation*}
\Dbar(\sigma)\phi(\sigma) =
\sum_{k=1}^\mu \frac{1}{(\sigma-\sigma_0)^k}
\begin{bmatrix}
\deebarb \phi^0_k \\
\big(\Lie_\T + \alpha\sigma_0\big) \phi^0_k-\deebarb \phi^1_k
\end{bmatrix}
+ \sum_{k=0}^{\mu-1} \frac{\alpha}{(\sigma-\sigma_0)^k}
\begin{bmatrix}
0 \\
\phi^0_{k+1}
\end{bmatrix},
\end{equation*}
so the condition that $\Dbar(\sigma)\phi(\sigma)$ is holomorphic is equivalent to
\begin{equation}\label{TopEqs}
\deebarb\phi^0_k=0,\ k=1,\dotsc,\mu
\end{equation}
and
\begin{equation}\label{BottomEqs}
\begin{gathered}
(\Lie_\T + \alpha\sigma_0) \phi^0_\mu-\deebarb \phi^1_\mu = 0,\\ (\Lie_\T + \alpha\sigma_0) \phi^0_k-\deebarb \phi^1_k + \alpha\phi^0_{k+1}=0,\ k=1,\dotsc,\mu-1.
\end{gathered}
\end{equation}

Let $P_{q'}=\Laplacian_{b,q'}-\Lie_\T^2$ in any degree $q'$. For any $(\tau,\lambda)\in \R^2$ and $q'$ let
\begin{equation*}
\mathcal E^{q'}_{\tau,\lambda}=\set{\psi\in C^\infty(\N;\Wedge^{q'}\smash[t]{\Veebar}^*\otimes F):P_{q'}\psi=\lambda\psi,\ -\im \Lie_\T \psi=\tau\psi}.
\end{equation*}
This space is zero if $(\tau,\lambda)$ is not in the joint spectrum $\Sigma^{q'} = \spec^{q'}(-\im\Lie_\T,P_{q'})$. Each $\phi^i_k$ decomposes as a sum of elements in the spaces $\mathcal E^{q-i}_{\tau,\lambda}$, $(\tau,\lambda)\in \Sigma^{q-i}$. Suppose that already $\phi^i_k\in \mathcal E^{q-i}_{\tau,\lambda}$:
\begin{equation*}
P_{q-i}\phi^i_k=\lambda \phi^i_k,\quad -\im\Lie_\T \phi^i_k=\tau\phi^i_k,\quad i=0,1,\ k=1,\dotsc,\mu.
\end{equation*}
Then \eqref{BottomEqs} becomes
\begin{equation}\label{BottomEqsEigen}
\begin{gathered}
(\im\tau + \alpha\sigma_0) \phi^0_\mu-\deebarb \phi^1_\mu = 0,\\ (\im\tau + \alpha\sigma_0) \phi^0_k-\deebarb \phi^1_k + \alpha\phi^0_{k+1}=0,\ k=1,\dotsc,\mu-1.
\end{gathered}
\end{equation}
If $\tau\ne \tau_0$, then $\im\tau + \alpha\sigma_0\ne 0$, and we get $\phi^0_k=\deebarb \psi^0_k$ for all $k$ with
\begin{equation*}
\psi^0_k = \sum_{j=0}^{\mu-k} \frac{(-\alpha)^j}{(\im\tau + \alpha\sigma_0)^{j+1}}\phi^1_{k+j}.
\end{equation*}
Trivially
\begin{equation*}
\big(\Lie_\T + \alpha\sigma_0\big) \psi^0_\mu = \phi^1_\mu
\end{equation*}
and also
\begin{equation*}
\big(\Lie_\T + \alpha\sigma_0\big) \psi^0_k + \alpha\psi^0_{k+1} = \phi^1_k,\quad k=1,\dotsc,\mu-1,
\end{equation*}
so
\begin{equation*}
\phi(\sigma) - \Dbar(\sigma)\sum_{k=1}^\mu \frac{1}{(\sigma-\sigma_0)^k}
\begin{bmatrix} \psi^0_k\\
0
\end{bmatrix}=0
\end{equation*}
modulo an entire element.

Suppose now that the $\phi^i_k$ are arbitrary and satisfy \eqref{TopEqs}-\eqref{BottomEqs}. The sum
\begin{equation}\label{FourierSeries}
\phi^i_k=\sum_{(\tau,\lambda)\in \Sigma^{q-i}} \phi^i_{k,\tau,\lambda},\quad \phi^i_{k,\tau,\lambda}\in \mathcal E^{q-i}_{\tau,\lambda}
\end{equation}
converges in $C^\infty$, indeed for each $N$ there is $C_{i,k,N}$ such that
\begin{equation}\label{RapidDecay}
\sup_{p\in \N}\|\phi^i_{k,\tau,\lambda}(p)\|\leq C_{i,k,N}(1+\lambda)^{-N}\quad\text{ for all }\tau,\lambda.
\end{equation}
Since $\Dbar(\sigma)$ preserves the spaces $\mathcal E^q_{\tau,\lambda}\oplus \mathcal E^{q-1}_{\tau,\lambda}$, the relations \eqref{BottomEqsEigen} hold for the $\phi^i_{k,\tau,\lambda}$ for each $(\tau,\lambda)$. Therefore,
with
\begin{equation}\label{PsiFourierSeries}
\psi^0_k = \sum_{\substack{(\tau,\lambda)\in\Sigma^{q-1}\\\tau\ne \tau_0}}\sum_{j=0}^{\mu-k} \frac{(-\alpha)^j}{(\im\tau + \alpha\sigma_0)^{j+1}}\phi^1_{k+j,\tau,\lambda}
\end{equation}
we have formally that
\begin{equation*}
\phi(\sigma)-\Dbar(\sigma) \sum_{k=1}^\mu \frac{1}{(\sigma-\sigma_0)^k}
\begin{bmatrix} \psi_k\\
0
\end{bmatrix} = \sum_{k=1}^\mu \frac{1}{(\sigma-\sigma_0)^k}
\begin{bmatrix} \tilde \phi^0_k\\
\tilde \phi^1_k
\end{bmatrix}
\end{equation*}
with
\begin{equation}\label{AtTau0}
\tilde \phi^i_k=\sum_{\substack{(\tau,\lambda)\in \Sigma^{q-1}\\\tau=\tau_0}} \phi^i_{k,\tau,\lambda},\quad \phi^i_{k,\tau,\lambda}\in \mathcal E^{q-i}_{\tau,\lambda}.
\end{equation}
However, the convergence in $C^\infty$ of the series \eqref{PsiFourierSeries} is questionable since there may be a sequence $\set{(\tau_\ell,\lambda_\ell)}_{\ell=1}^\infty \subset \spec(-\im\Lie_\T,P_{q-1})$ of distinct points such that $\tau_\ell\to\tau_0$ as $\ell\to\infty$, so that the denominators $\im \tau_\ell+\alpha\sigma_0$ in the formula for $\psi^0_k$ tend to zero so fast that for some nonnegative $N$, $\lambda_\ell^{-N}/(\im \tau_\ell+\alpha\sigma_0)$ is unbounded. To resolve this difficulty we will first show that $\phi(\sigma)$ is $\Dbar(\sigma)$-cohomologous (modulo holomorphic terms) to an element of the same form as $\phi(\sigma)$ for which in the series \eqref{FourierSeries} the terms $\phi^i_{k,\tau,\lambda}$ vanish if $\lambda-\tau^2>\e$; the number $\e>0$ is chosen so that
\begin{equation}\label{ChoiceOfEps}
(\tau_0,\lambda)\in \Sigma^q\cup\Sigma^{q-1} \implies \lambda=\tau_0^2\text{ or }\lambda\geq\tau_0^2+\e.
\end{equation}
Recall that $\spec^{q'}(-\im\Lie_\T,P_{q'})\subset \set{(\tau,\lambda):\lambda\geq \tau^2}$.

For any $V\subset \bigcup_{q'}\Sigma^{q'}$ let
\begin{equation*}
\Pi^{q'}_V:L^2(\N;\Wedge^{q'}\smash[t]{\Veebar}^*\otimes F)\to L^2(\N;\Wedge^{q'}\smash[t]{\Veebar}^*\otimes F)
\end{equation*}
be the orthogonal projection on $\bigoplus_{(\tau,\lambda)\in V} \mathcal E^{q'}_{\tau,\lambda}$. If $\psi\in C^\infty(\N;\Wedge^{q'}\smash[t]{\Veebar}^*\otimes F)$, then the series
\begin{equation*}
\Pi^{q'}_V \psi=\sum_{(\tau,\lambda)\in V}\psi_{\tau,\lambda}, \quad\psi_{\tau,\lambda}\in \mathcal E^{q'}_{\tau,\lambda}
\end{equation*}
converges in $C^\infty$. It follows that $\Laplacian_{b,q'}$ and $\Lie_\T$ commute with $\Pi^{q'}_V$ and that $\deebarb\Pi^{q'}_V=\Pi^{q'+1}_V\deebarb$. Since the $\Pi^{q'}_V$ are selfadjoint, also $\deebar_b^\star\Pi^{q'+1}_V = \Pi^{q'}_V\deebarb^\star$.

Let
\begin{equation*}
U=\set{(\tau,\lambda)\in \Sigma^q\cup \Sigma^{q-1}:\lambda <\tau^2+\e},\quad U^c=\Sigma^q\cup \Sigma^{q-1}\minus U.
\end{equation*}
Then, for any sequence $\set{(\tau_\ell,\lambda_\ell)}\subset U$ of distinct points we have $|\tau_\ell|\to\infty$ as $\ell\to\infty$. Define
\begin{equation*}
G^{q'}_{U^c}\psi =\sum_{(\tau,\lambda)\in U^c}\frac{1}{\lambda-\tau^2}\psi_{\tau,\lambda}
\end{equation*}
In this definition the denominators $\lambda-\tau^2$ are bounded from below by $\e$, so $G^{q'}_{U^c}$ is a bounded operator in $L^2$ and maps smooth sections to smooth sections because the components of such sections satisfy estimates as in \eqref{RapidDecay}. The operators are analogous to Green operators: we have
\begin{equation}\label{PseudoGreen1}
\Laplacian_{b,q'}G^{q'}_{U^c} = G^{q'}_{U^c} \Laplacian_{b,q'}=\Id -\Pi^{q'}_U
\end{equation}
so if $\deebarb\psi=0$, then
\begin{equation}\label{PseudoGreen2}
\Laplacian_{b,q'}G^{q'}_{U^c}\psi = \deebarb\deebarb^\star G^{q'}_{U^c}\psi
\end{equation}
since $\deebarb G^{q'}_{U^c}= G^{q'+1}_{U^c}\deebarb$.

Write $\phi(\sigma)$ in \eqref{RepresentativeA} as
\begin{equation*}
\phi(\sigma) =\Pi_{U^c}\phi(\sigma)+ \Pi_{U}\phi(\sigma)
\end{equation*}
where
\begin{equation*}
\Pi_{U^c}\phi(\sigma) =\sum_{k=1}^\mu \frac{1}{(\sigma-\sigma_0)^k}
\begin{bmatrix}
\Pi^q_{U^c}\phi^0_k \\ \Pi^{q-1}_{U^c}\phi^1_k
\end{bmatrix},\quad
\Pi_U\phi(\sigma)=
\sum_{k=1}^\mu \frac{1}{(\sigma-\sigma_0)^k}
\begin{bmatrix}
\Pi^q_U\phi^0_k \\ \Pi^{q-1}_U\phi^1_k
\end{bmatrix}.
\end{equation*}
Since $\Dbar(\sigma)\phi(\sigma)$ is holomorphic, so are $\Dbar(\sigma)\Pi_{U^c}\phi(\sigma)$ and $\Dbar(\sigma)\Pi_U\phi(\sigma)$.

We show that $\Pi_{U^c}\phi(\sigma)$ is exact modulo holomorphic functions. Using \eqref{TopEqs}, \eqref{PseudoGreen1}, and \eqref{PseudoGreen2}, $\Pi^q_{U^c}\phi^0_k = \deebarb^\star \deebarb \Pi^q_{U^c}\phi^0_k$. Then
\begin{equation*}
\Pi_{U^c}\phi(\sigma)-\Dbar(\sigma)\sum_{k=1}^\mu \frac{1}{(\sigma-\sigma_0)^k}
\begin{bmatrix}
\deebarb^\star G^q_{U^c}\Pi^q_U\phi^0_k \\ 0
\end{bmatrix}
=\sum_{k=1}^\mu \frac{1}{(\sigma-\sigma_0)^k}
\begin{bmatrix}
0 \\ \hat\phi^1_k
\end{bmatrix}
\end{equation*}
modulo a holomorphic term for some $\hat\phi^1_k$ with $\Pi^{q-1}_{U^c}\hat\phi^1_k=\hat\phi^1_k$. The element on the right is $\Dbar(\sigma)$-closed modulo a holomorphic function, so its components satisfy \eqref{TopEqs}, \eqref{BottomEqs}, which give that the $\tilde\phi^1_k$ are $\deebarb$-closed. Using again \eqref{PseudoGreen1} and \eqref{PseudoGreen2} we see that $\Pi_{U^c}\phi(\sigma)$ represent an exact element.

We may thus assume that $\Pi^q_{U^c}\phi(\sigma)=0$. If this is the case, then the series \eqref{PsiFourierSeries} converges in $C^\infty$, so $\phi(\sigma)$ is cohomologous to the element
\begin{equation*}
\tilde\phi(\sigma)=\sum_{k=1}^\mu \frac{1}{(\sigma-\sigma_0)^k}
\begin{bmatrix}
\tilde \phi^0_k \\ \tilde \phi^1_k
\end{bmatrix}
\end{equation*}
where the $\tilde \phi^i_k$ are given by \eqref{AtTau0} and satisfy $\Pi^{q-i}_{U^c}\tilde \phi^i_k=0$. By \eqref{ChoiceOfEps}, $\tilde \phi^i_k\in \mathcal E^{q-i}_{\tau_0,\tau_0^2}$. In particular, $\Laplacian_{b,q-i}\phi^i_k=0$.

Assuming now that already $\phi^i_k\in \mathcal E^{q-i}_{\tau_0,\tau_0^2}$, the formulas \eqref{BottomEqsEigen} give (since $\tau=\tau_0$ and $\im\tau_0+\alpha\sigma_0=0$)
\begin{equation*}
\deebarb\phi^1_\mu=0,\quad \phi^0_k = \deebarb \frac{1}{\alpha}\phi^1_{k-1},\ k=2,\dotsc,\mu.
\end{equation*}
Then
\begin{equation*}
\phi(\sigma)-\frac {1}{\alpha}\Dbar(\sigma)\sum_{k=2}^{\mu+1} \frac{1}{(\sigma-\sigma_0)^k}
\begin{bmatrix}
\phi^1_{k-1} \\ 0
\end{bmatrix}
=\frac{1}{\sigma-\sigma_0}
\begin{bmatrix}
\phi^0_1 \\ 0
\end{bmatrix}
\end{equation*}
with $\Laplacian_{b,q}\phi^0_1=0$.
\end{proof}

\appendix

\section{Totally characteristic differential operators}\label{TheAppendix}

We review here some basic definitions and notation concerning totally characteristic differential operators.

Let $E$, $F\to\M$ be vector bundles and let $\Diff^m(\M;E,F)$ be the space of differential operators $C^\infty(\M;E)\to C^\infty(\M;F)$ of order $m$. Then 
\begin{equation}\label{TotallyChar}
\display{300pt}{
$\Diff_b^m(\M;E,F)$, the space of totally characteristic differential operators of order $m$, consists of those elements $P\in \Diff^m(\M;E,F)$ with the property 
\begin{equation*}
\hspace*{-30pt}\rr^{-\nu}P\rr^\nu \in \Diff^m(\M;E,F),\quad \nu=1,\dotsc, m
\end{equation*}
\ie, $\rr^{-\nu}P\rr^\nu$ has coefficients smooth up to the boundary. }
\end{equation}

\medskip
Let $\pi:T^*\M\to\M$ and $\bpi:\bT^*\M\to\M$ be the canonical projections. Suppose $P\in \Diff^m_b(\M;E,F)$. Since $P$ is in particular a differential operator, it has a principal symbol
\begin{equation*}
\sym(P)\in C^\infty(T^*\M;\Hom(\pi^* E,\pi^*F)).
\end{equation*}
The fact that $P$ is totally characteristic implies that $\sym(P)$ lifts to a section
\begin{equation*}
\bsym(P)\in C^\infty(\bT^*\M;\Hom(\bpi^* E,\bpi^*F)),
\end{equation*}
the principal $b$-symbol of $P$, characterized by
\begin{equation}\label{TheBSymbol}
\bsym(P)(\ev^*\xi) = \sym(P)(\xi).
\end{equation}

If $P\in \Diff^m_b(\M;E,F)$, then $P$ induces a differential operator
\begin{equation}\label{Pb}
P_b\in \Diff^m_b(\M;E_{\partial\M},F_{\partial\M}),
\end{equation}
as follows. If $\phi\in C^\infty(\partial\M;E_{\partial\M})$, let $\tilde \phi \in C^\infty(\M;E)$ be an extension of $\phi$ and let
\begin{equation*}
P_b\phi = (P\tilde \phi)|_{\partial\M}.
\end{equation*}
The condition \eqref{TotallyChar} ensures that $P\tilde \phi|_{\partial\M}$ is independent of the extension of $\phi$ used. Clearly if $P$ and $Q$ are totally characteristic differential operators, then so is $PQ$, and
\begin{equation}\label{PQb}
(PQ)_b=P_bQ_b.
\end{equation}

The indicial family of $P\in \Diff^m_b(\M;E,F)$ is defined as follows. Fix a defining function $\rr$ for $\partial\M$. Then for any $\sigma\in \C$,
\begin{equation*}
P(\sigma)=\rr^{-\im \sigma}P\rr^{\im \sigma}\in \Diff^m_b(\M;E,F).
\end{equation*}
Let 
\begin{equation}\label{IndicialFamily}
\widehat P(\sigma)=P(\sigma)_b.
\end{equation}



\begin{thebibliography}{10}

\bibitem{BCT83}
Baouendi, M. S., Chang, C. H., Treves, F., \emph{Microlocal hypo-analyticity and extension of CR functions}, J. Differential Geom. {\bf 18} (1983), 331--391.

\bibitem{FK}
Folland, G., Kohn, J., \emph{The Neumann problem for the Cauchy-Riemann complex}, Annals of Mathematics Studies {\bf 75}. Princeton University Press, 1972.

\bibitem{He}
Helgason, S., \emph{Differential geometry, Lie groups, and symmetric spaces}. Pure and Applied Mathematics, 80. Academic Press, Inc, New York-London, 1978.

\bibitem{Ho65}
H\"ormander, L., \emph{The Frobenius-Nirenberg theorem},
Ark. Mat. 5 1965 425--432 (1965).

\bibitem{RBM1}
Melrose, R. B., \emph{Transformation of boundary problems} Acta Math. {\bf 147} (1981), 149--236.

\bibitem{RBM2}
\bysame,
\emph{The Atiyah-Patodi-Singer index theorem}, Research Notes in Mathematics, A. K. Peters, Ltd., Wellesley, MA, 1993.

\bibitem{Me3}
Mendoza, G., \emph{Strictly pseudoconvex b-CR manifolds}, Comm. Partial Differential Equations {\bf 29} (2004) 1437--1503.

\bibitem{Me4}
\bysame,
\emph{Boundary structure and cohomology of $b$-complex manifolds}. In ``Partial Differential Equations and Inverse Problems'', C. Conca et al., eds., Contemp. Math., vol. 362 (2004), 303--320.

\bibitem{Me5}
\bysame,
\emph{Anisotropic blow-up and compactification},  In "Recent Progress on some Problems in Several Complex Variables and Partial Differential Equations", S. Berhanu et al., eds., Contemp. Math., vol. 400 (2006) 173--187.

\bibitem{Me6}
\bysame,
\emph{Characteristic classes of the boundary of a complex $b$-manifold}, in \emph{Complex analysis}, 245--262, Trends Math., Birkh\"auser/Springer Basel AG, Basel, 2010. Dedicated to Linda P.~Rothschild.

\bibitem{Me7}
\bysame,
\emph{A Gysin sequence for manifolds with R-action}, Geometric Analysis of Several Complex Variables and Related Topics, Contemporary Mathematics, vol. 550, Amer. Math. Soc., Providence, RI, 2011, pp. 139-154. 

\bibitem{Me8}
\bysame,
\emph{Two embedding theorems},  in \emph{From Fourier Analysis and Number Theory to Radon Transforms and Geometry}, 399-429, H.~M.~Farkas et al. (eds.), Developments in Mathematics 28, Springer Verlag. Dedicated to Leon Ehrenpreis, in memoriam. 

\bibitem{Me9}
\bysame,
\emph{Hypoellipticity and vanishing theorems}, submitted.

\bibitem{NeNi57}
Newlander, A., Nirenberg, L., \emph{Complex analytic coordinates in almost complex manifolds}, Ann. of Math. {\bf 65} (1957), 391--404.

\bibitem{Ni57}
L. Nirenberg, \emph{A complex Frobenius theorem}, Seminar on analytic functions I, Princeton, (1957) 172--189.

\bibitem{Tr81}
Treves, F., \emph{Approximation and representation of functions and distributions annihilated by a system of complex vector fields}, Centre Math. Ecole Polytechnique, Paliseau, France (1981).

\bibitem{Tr92}
Treves, F., \emph{Hypo-analytic structures. Local theory}, Princeton Mathematical Series, \textbf{40}, Princeton University Press, Princeton, NJ, 1992.

\end{thebibliography}
\end{document}